\documentclass[11pt, reqno]{amsart}

\usepackage[margin=1.1in]{geometry}

\usepackage{amsmath, amsfonts, amsthm, amssymb, multicol, mathtools, dsfont, mathrsfs}
\usepackage{graphicx}
\usepackage{float, hyperref}
\usepackage{scalerel,stackengine, subcaption}
\usepackage[usenames,dvipsnames,x11names]{xcolor}
\usepackage{enumitem}
\usepackage{pgfplots}
\usepgfplotslibrary{fillbetween}
\pgfplotsset{width=10cm,compat=1.9}
\usepackage[square,sort,comma,numbers]{natbib}
\makeatletter
\g@addto@macro\bfseries{\boldmath}
\makeatother

\makeatletter
\def\@setauthors{%
  \begingroup
  \def\thanks{\protect\thanks@warning}%
  \trivlist
  \centering\footnotesize \@topsep30\p@\relax
  \advance\@topsep by -\baselineskip
  \item\relax
  \author@andify\authors
  \def\\{\protect\linebreak}

  \normalsize\lowercase{\authors}%
  
	\ifx\@empty\contribs
  \else
    ,\penalty-3 \space \@setcontribs
    \@closetoccontribs
  \fi
  \endtrivlist
  \endgroup
}
\def\@settitle{\begin{center}
\LARGE\lowercase{\@title}
  \end{center}%
}
\makeatother
\newcommand{\authoremail}[1]{\email{\href{mailto:#1}{\color{lightblue}{#1}}}}
\newcommand{\authoraddress}[1]{\address{\normalfont{#1}}}

\numberwithin{equation}{section}

\newtheorem{thm}{Theorem}[section]

\newtheorem{cor}[thm]{Corollary}
\newtheorem{defn}[thm]{Definition}

\newtheorem{prop}[thm]{Proposition}
\newtheorem{example}[thm]{Example}

\renewcommand{\epsilon}{\varepsilon}
\newcommand{\eps}{\varepsilon}

\newcommand{\rd}{\mathbb{R}^d}

\renewcommand{\geq}{\geqslant}
\renewcommand{\leq}{\leqslant}
\newcommand{\ubd}{\overline{\dim}_{\textup{B}}}
\newcommand{\lbd}{\underline{\dim}_{\textup{B}}}
\newcommand{\hd}{\dim_{\textup{H}}}
\newcommand{\frd}{\dim_{\textup{Fr}}}

\newcommand{\bd}{\dim_{\textup{B}}}

\newcommand{\fs}{\dim^\theta_{\mathrm{F}}}
\newcommand{\as}{\dim^\theta_{\mathrm{A}}}
\newcommand{\fd}{\dim_{\mathrm{F}}}
\newcommand{\sd}{\dim_{\mathrm{S}}}

\newcommand{\J}{\mathcal{J}}

\makeatletter
\DeclareRobustCommand\widecheck[1]{{\mathpalette\@widecheck{#1}}}
\def\@widecheck#1#2{%
    \setbox\z@\hbox{\m@th$#1#2$}%
    \setbox\tw@\hbox{\m@th$#1%
       \widehat{%
          \vrule\@width\z@\@height\ht\z@
          \vrule\@height\z@\@width\wd\z@}$}%
    \dp\tw@-\ht\z@
    \@tempdima\ht\z@ \advance\@tempdima2\ht\tw@ \divide\@tempdima\thr@@
    \setbox\tw@\hbox{%
       \raise\@tempdima\hbox{\scalebox{1}[-1]{\lower\@tempdima\box
\tw@}}}%
    {\ooalign{\box\tw@ \cr \box\z@}}}
\makeatother

\stackMath
\newcommand\reallywidehat[1]{%
\savestack{\tmpbox}{\stretchto{%
  \scaleto{%
    \scalerel*[\widthof{\ensuremath{#1}}]{\kern.1pt\mathchar"0362\kern.1pt}%
    {\rule{0ex}{\textheight}}
  }{\textheight}%
}{2.4ex}}%
\stackon[-6.9pt]{#1}{\tmpbox}%
}
\usepackage{fancyhdr}
\definecolor{lightblue}{HTML}{2B77A4}
\colorlet{plotblue}{LightSkyBlue3!80}
\definecolor{darkred}{HTML}{9E0D0D}
\hypersetup{
	colorlinks=true,
	linkcolor=darkred,
	urlcolor=darkred,
	citecolor=lightblue
}
\title[Quantitative flatness]{Quantitative flatness and obstructions in Fourier analysis}

\author{Jonathan M. Fraser}
\authoraddress{J. M. Fraser, University of St Andrews, Scotland}
\authoremail{jmf32@st-andrews.ac.uk}
\thanks{JMF was financially supported by an EPSRC Open Fellowship (EP/Z533440/1) and a Leverhulme Trust Research Project Grant (RPG-2023-281).}

\begin{document}

\thispagestyle{empty}
\begin{abstract}
The Fourier restriction  problem asks when it is meaningful to restrict the Fourier transform of a function to the support of a given measure.  A  different (but related) problem is to determine when convolution with the measure improves the integrability of a function.  A third problem  is to estimate the Fourier dimension of the measure, which involves understanding uniform decay rates for the Fourier transform.  Positive results for any of these problems require a quantitative understanding of various geometric properties  of the measure, including (broad interpretations of) curvature and arithmetic resonance.  In this paper we first establish a unified framework for providing  negative results for all three  problems (that is, we provide explicit obstructions to a measure satisfying certain Fourier restriction, $L^p$-improving, or Fourier decay estimates) by quantifying flat parts of the measure in the spirit of the well-known Knapp examples from harmonic analysis.  Our main interest is in the application of these abstract results in various concrete settings where we use analytic and fractal geometric concepts to force `flatness'.  

Our  framework applies  generally and this allows us to unify and extend various parts of the literature.  Some representative applications  include:~(i) we bound the Fourier dimension of the surface measure on a compact $C^2$ surface of arbitrary dimension above by the smallest ambient rank of a point on the surface; (ii) we prove that the Fourier dimension of a smooth curve in $\rd$ is at most $4/(d+1)$ and so such curves cannot be Salem for $d \geq 4$  with analogous results for higher dimensional submanifolds; (iii) we provide mild and readily checkable conditions ensuring that an ergodic measure on a self-affine set is not Salem; (iv) we obtain an explicit upper bound for the Fourier dimension of the Patterson--Sullivan measure for parabolic Kleinian group actions; (v) we  establish  novel connections between Fourier restriction/decay and \emph{a priori} unrelated concepts in fractal geometry including the Assouad spectrum of projections and slices,  and  a strong form of tube-nullity.   We establish several auxiliary results along the way, including a precise characterisation of $L^2$-flattening in terms of the Fourier spectrum, and we consider numerous examples throughout the paper.\\ \\
  \textit{Mathematics Subject Classification}: primary: 42B10, 60J65; secondary: 42B20, 28A75, 28A78, 28A80.
\\
\textit{Key words and phrases}:   Fourier restriction, $L^p$-improving, convolution operator, Fourier decay, Fourier dimension, Fourier spectrum, Knapp example, $L^2$-flattening, Assouad spectrum, tube null, box dimension, self-affine set, Kleinian group, Julia set.
\end{abstract}
\maketitle

\tableofcontents

\section{Introduction and overview of paper}

Let $\mu$ be a finite compactly supported Borel measure on $\rd$.  The \emph{Fourier restriction or extension problem}   asks when it is meaningful to extend the Fourier transform of an arbitrary $L^{p}(\mu)$ function to a function in   $L^{q}(\rd)$.   This is certainly not always possible and the values of $p$ and $q$ for which such an extension does hold depend on $\mu$ in a subtle way. See Section \ref{knappproof} for a more detailed discussion of this problem. A different but related problem is the \emph{$L^p$-improving problem}, which is to determine when convolution with $\mu$ improves the integrability of all $L^p$ functions.   See Section \ref{sec:improving} for a more detailed discussion of this problem. Yet another distinct but related problem is the \emph{Fourier decay problem} which asks how fast the Fourier transform of $\mu$ decays.  This can be done in an $L^\infty$ sense, leading to the Fourier dimension which captures the optimal uniform decay rate, or in a suitable average sense which leads to the Fourier spectrum,  see Section \ref{sec:decay} for the  definitions.  The precise values of the Fourier dimension and spectrum also depend  on $\mu$ in a subtle way.

The Fourier restriction, $L^p$-improving,  and Fourier decay problems are  connected by the fact that they  are all inhibited by `flatness'.  That is, if $\mu$ puts a lot of mass in  a relatively flat part of space, then one would expect the available $L^p$--$L^q$ extension, $L^p$-improving,   and Fourier decay estimates to be weaker.  In the restriction literature, `flatness' has often been captured by Knapp style examples.  Our first goal is to formulate a general Knapp example and to precisely describe its consequences for general restriction/extension estimates, see Theorem \ref{knapp}. This follows the classical Knapp style argument but serves to unify various estimates from the literature. Our next goal is to establish a Knapp style obstruction in the $L^p$-improving problem which extends and simplifies  previous results, see Theorem \ref{thm:improving}.    Turning to Fourier decay,   we next aim to prove a novel Knapp style obstruction in the Fourier decay problem, that is, to bound the Fourier dimension or spectrum from above in terms of `quantitative flatness'.  This turns out to be quite subtle and we spend some time formulating various different approaches to this (see Corollary \ref{fourier}, and Theorems  \ref{fourier2}, \ref{fourier3}, and \ref{fourier4})  and discussing why a natural conjecture in this direction fails (see, in particular, Example \ref{nowithsmallg}). Next we precisely characterise $L^2$-flattening in terms of the Fourier spectrum (see Theorem \ref{flattening}) and this allows us to show that the strongest form of quantitative flattening we consider also prohibits $L^2$-flattening. It is worth noting here that the two distinct uses of `flattening' in the previous sentence refer to very different things  and are in fact going in opposite directions:~if a measure is too `flat', then its Fourier transform is not good at `flattening' other measures!

Once our abstract obstructions to Fourier restriction, $L^p$-improving, Fourier decay, and $L^2$-flattening are established, our goal shifts to applications of these results.  We spend the rest of the paper establishing quantitative flatness in various situations.  We start with some basic estimates, recovering a result of Mitsis which relates the Fourier dimension and Frostman dimension and a basic obstruction to restriction estimates due to Hambrook--{\L}aba (see Theorem \ref{basic}).  Next we consider the classical setting for Fourier restriction, that is, when $\mu$ is the surface measure on a  $C^2$ surface.  Here there is currently considerable interest in Fourier dimension estimates where it has recently been proved that the Fourier dimension of a smooth hypersurface of constant rank is equal to its rank, see papers of Zhu \cite{zhu1,zhu2}, and also \cite{kroon}.  We are able to consider arbitrary co-dimension (that is, beyond co-dimension 1) and also variable rank.  We prove that the Fourier dimension of the surface measure on a $C^2$ surface is bounded above by the smallest ambient rank of a point on the surface (see Theorem \ref{smoothsurface1}).  We also prove that the Fourier dimension of a $C^2$ surface itself is bounded above by the largest ambient rank of a point on the surface, provided this number is sufficiently small (see Theorem \ref{smoothsurface2}). Both of these estimates are best possible and resolve, or make progress, on a folklore conjecture about how the Fourier dimension depends on the rank of a surface.  In a complementary direction, we prove that if a submanifold of $\rd$ has sufficiently small dimension relative to the ambient spatial dimension $d$, then it cannot be Salem and we give a bound for the Fourier dimension which goes to zero as $d \to \infty$ (see Corollaries  \ref{surfacedim1},  \ref{surfacedim2} and \ref{surfacedim3}).   For example,  we prove that the Fourier dimension of a smooth curve in $\rd$ is at most $4/(d+1)$ and the Fourier dimension of the arclength measure is at most $4/(d+2)$.

The general nature of our flatness framework means it is amenable to various geometric observations which \emph{a priori} have nothing to do with Fourier decay, such as tube nullity (see Theorem \ref{tubenull}), the Assouad spectrum of slices (see Theorem \ref{slices}, and Theorem \ref{nonlinearslices} for nonlinear slices) and the Assouad spectrum of projections (see Theorem \ref{projections}).

In the final part of the paper, we consider explicit examples coming from the fractal geometry literature, including self-affine sets (see Theorem \ref{affine}), limit sets of Kleinian groups (see Theorem \ref{kleinian}), and Julia sets of rational maps (see Theorem \ref{julia}). We use geometric features of these examples to establish quantitative flatness which in turn yields obstructions to the Fourier restriction and decay problems.  More precisely, we use the non-conformality in the construction of self-affine sets to force flatness inside basic construction cylinders and, in the case of Kleinian limit sets and Julia sets, we use parabolicity to force flatness in a (flat) neighbourhood of the parabolic fixed points.  Estimating the Fourier dimension in these three classes is notoriously difficult (and generally open) and often any positive results are non-quantitative and give rise to mysterious  numerical values, see \cite{banajibaker,banajiyu, bourgaindyatlov,jordansahlsten,naud, leclerc}.  By contrast, the bounds we obtain are clean and depend only on familiar parameters such as the entropy, Lyapunov exponents, parabolic ranks, and petal numbers.  The bounds we obtain often allow us to rule out the possibility that the measures in question are Salem. Moreover, we are able to exhibit examples in each class where the Fourier dimension is arbitrarily small while the Hausdorff dimension remains bounded away from zero.  A far as we are aware, we provide the first non-trivial negative results for the Fourier restriction, $L^p$-improving, or Fourier decay problems   for these well-studied classes of fractal. 

We refer the reader to \cite{mattilaFourier} for a comprehensive treatment of Fourier analysis and its applications in fractal geometry, as well as the Fourier restriction and decay problems.  For more background on Fourier analysis in general, see \cite{grafakos, sogge, Ste93}, and for fractal geometry, see \cite{barany,Fal03, jon:book}.

\section{Quantitative flatness:~abstract obstructions in Fourier analysis}

\subsection{Key definitions and terminology}

Throughout the paper we consider  non-zero finite compactly supported Borel measures $\mu$ on $\rd$ and we are interested in their Fourier analytic properties.  Our aim is to use quantified information pertaining to how `flat' or `concentrated'  the measure is to obstruct or inhibit what might be perceived as `good' Fourier analytic behaviour.  In order to do this we introduce $\gamma$-flat measures, which use a parameter $\gamma$ to ensure that a disproportionately large amount of the measure is concentrated inside a (usually anisotropic) cuboid.  We say $Q \subseteq \rd$ is a \emph{cuboid} if
\[
Q=g\left(I_1 \times \cdots \times I_d \right)+z
\]
where $g \in SO(d)$ is an orthogonal transformation of $\rd$,  $z \in \rd$ is a translate, and $I_j$ are non-empty non-degenerate compact intervals in $\mathbb{R}$ centred at the origin. We refer to $z$ as the \emph{centre} of the cuboid $Q$.

Here and throughout, we use the notation $A\lesssim B$ to mean that there exists a uniform constant $c$ such that $A\leq cB$.  The constant $c$ is allowed to depend on the ambient spatial dimension $d$ and on the measure in question.  It can also depend on parameters which are fixed, e.g. the flatness parameter $\gamma$.  But it cannot depend on variables, such as an element in the sequence of cuboids used to witness flatness or on particular length scales used in the relevant proof.  We also write  $A\approx B$ if $A\lesssim B$ and $B\lesssim A$.  We write $\mathcal{L}^d$ for the $d$-dimensional Lebesgue measure on $\rd$ and it is useful to keep in mind the simple fact that
\[
\mathcal{L}^d(Q) = \prod_{j=1}^d |I_j|
\]
where $|I_j|>0$ is the length of the interval $I_j$.

We now introduce the first of two key definitions which will be used throughout the paper. 

\begin{defn} \label{quantflat}
Let $\mu$ be a finite compactly supported Borel measure in $\rd$ and $\gamma \geq 0$.  We say $\mu$ is \textbf{$\gamma$-flat}  if there exists a sequence  $Q_k$ of cuboids such that
\[
\mu(Q_k) \gtrsim \mathcal{L}^d(Q_k)^\gamma
\]
holds for     all $k$ and   $\mathcal{L}^d(Q_k) \to 0$ as $k \to \infty$. We say that the sequence of $Q_k$ \emph{witnesses} the $\gamma$-flatness of $\mu$.
\end{defn}

If the cuboids are isotropic (that is, they are in fact cubes), then being $\gamma$-flat does not really imply anything about flatness, but rather that the measure is concentrated in a small ball.  This information is precisely described by the Frostman and box dimensions of the measure, see Section \ref{dimreview} for precise definitions.  However, the isotropic case is the trivial case where we will find the least interesting consequences.  For this reason we chose to use the term `flat' rather than, for example, `concentrated' to reflect that the anisotropic case, which does imply `flatness', is the most important. 

It is worth keeping in mind that $\gamma$-flatness gets harder to satisfy as $\gamma$ decreases.  Indeed, an easy pigeonholing argument shows that \emph{all}  measures are $1$-flat and there do exist $0$-flat measures, as we will see below.  Therefore, the interesting range is $\gamma \in [0,1)$.

Often we will need a refinement of the notion of $\gamma$-flatness where we not only  pay attention to the mass of the cuboids, but also how the Lebesgue volume of the cuboid relates to the shortest sidelength.  In some sense this is also quantifying `how flat' the cuboids are.  This is captured by the following, which is our second key definition.

\begin{defn} \label{quantflat2}
Let $\mu$ be a finite compactly supported measure in $\rd$, $\gamma \geq 0$ and $\upsilon \in [1,d]$.  We say $\mu$ is \textbf{$(\gamma, \upsilon)$-flat} if $\mu$ is $\gamma$-flat with a sequence $Q_k$ of  witnessing cuboids with shortest sides denoted by
\[
\delta_k = \min_j |I_j^k|
\]
such that
\[
\mathcal{L}^d(Q_k) \gtrsim \delta_k^\upsilon
\]
holds for all $k$.
\end{defn}

In the above, $\upsilon = d$ corresponds to the least interesting isotropic case where the $Q_k$ are (up to constants)   cubes of volume $\delta_k^d$.  On the other hand,  $\upsilon=1$ applies to situations in the other extreme where the cuboids have sidelengths (up to constants)  $\delta_k \times 1 \times \cdots \times 1$.

To the best of our knowledge, the concept of being $\gamma$-flat (Definition \ref{quantflat}) is new in this level of generality, although it has appeared before in the context of measures on submanifolds.  In particular, the \emph{Oberlin condition} from \cite[(1)]{gressman} (see also \cite{oberlin}) is essentially the negation of being $\gamma$-flat.  This consideration also leads to Oberlin's \emph{affine dimension}, introduced in \cite{oberlinaffine}, which involves constructing an affine Hausdorff measure based on covers by cuboids.   Moreover, these quantified descriptions of flatness---along with the classical Knapp examples---are known to be related to Fourier restriction, see \cite{demeter, gressman, mattilaFourier, oberlin,oberlinaffine} and the references therein.  The concept of being $(\gamma,\upsilon)$-flat (Definition \ref{quantflat2}) appears to be genuinely  new, although it is so simple and natural that it may very well have been considered elsewhere.  

In the following subsections, we will deduce consequences of $\gamma$-flatness and $(\gamma,\upsilon)$-flatness concerning several well-known problems in Fourier analysis.  

\subsection{Fourier restriction:~a general Knapp example} \label{knappproof}

The  Fourier restriction problem asks when it is meaningful to restrict the Fourier transform of a function to a set of measure zero.  Particular instances of this problem have become some of the most famous and long-standing problems in harmonic analysis, going back to work by Stein (see \cite{Ste70, Tom75}), and have deep  connections with  geometric measure theory and  PDEs. Some of the most important examples include smooth co-dimension 1 manifolds such as the sphere, cone and hyperboloid.  Here and throughout, we   write  $p'$ and $q'$ for the H\"older conjugates of $p,q \in [1,\infty]$, defined by $\frac{1}{p} + \frac{1}{p'} = \frac{1}{q} + \frac{1}{q'}=1$. 

More precisely, the \emph{Fourier restriction problem} asks, given a measure $\mu$ supported on a Lebesgue null set in $\rd$, for which values of $q'\in[1,2)$ and  $p'\in[1,\infty]$ is  the Fourier transform  $L^{q'}(\rd) \to L^{p'}(\mu)$ bounded. That is, we say $\mu$ satisfies the $R(q'\to p')$ restriction estimate  if
\begin{equation}\label{eq:restriction}
  \| \widehat{f}\, \|_{L^{p'}(\mu)} \lesssim \| f \|_{{L^{q'}(\rd)}}
\end{equation}
holds for all $f\in L^{q'}(\rd)$. By $L^p$ duality, the $R(q'\to p')$ restriction estimate \eqref{eq:restriction} is equivalent to   the $E(p\to q)$  extension estimate 
\begin{equation}\label{eq:extension}
    \| \widehat{f\mu} \|_{L^{q}(\rd)} \lesssim \| f \|_{L^{p}(\mu)}
\end{equation}
 holding  for all $f\in L^{p}(\mu)$.  Note that we restrict our attention to $q>2$ because if $q=q'=2$ then we consider $f \in L^2(\rd)$ and the Fourier transform of such functions is only defined almost everywhere and so we cannot make sense of the restriction to a Lebesgue null set.  We will refer to the extension estimates throughout the paper, noting the above equivalence with restriction.  Our goal will be to impose restrictions on the pairs $(p,q)$ for which a given measure satisfies the    $E(p\to q)$  extension estimate \eqref{eq:extension}.

The classical `Knapp example' in restriction theory establishes a necessary condition for restriction estimates to hold for the surface measure on the  sphere. The idea is to choose a small `cap' which contains a relatively large amount of measure in a relatively flat piece of the surface in question.  Recall that genuinely flat sets have no Fourier decay, and therefore no Fourier restriction estimates are possible for them.  Knapp examples extend this idea by showing that `roughly flat' pieces provide  obstructions to obtaining good restriction and extension estimates.

First we establish an abstract Knapp obstruction which we will then apply to several examples.  This construction is a modification of the standard Knapp example but we are unaware of it appearing in the literature in this generality. The most general versions we are aware of are \cite[Example 1.8]{demeter} and \cite{oberlin} (see \cite{gressman})  but these are still in the context of   measures on smooth surfaces.  

\begin{thm} \label{knapp}
Let $\mu$ be a  $\gamma$-flat measure in $\rd$ for some $\gamma\in [0,1]$.  If the $E(p\to q)$ extension estimate 
\eqref{eq:extension} holds for $p \geq 1$ and $q \geq 2$, then
\[
\frac{p}{q(p-1)} \leq \gamma.
\]
In particular, if $p=2$, then $q \geq 2/\gamma$.
\end{thm}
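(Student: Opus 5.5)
We test the extension estimate \eqref{eq:extension} against the indicator functions of the witnessing cuboids, in the spirit of the classical Knapp example. Let $Q_k = g_k(I_1^k\times\cdots\times I_d^k)+z_k$ witness the $\gamma$-flatness of $\mu$, and take $f_k = \mathbf{1}_{Q_k}$. The right-hand side is then exactly $\|f_k\|_{L^p(\mu)} = \mu(Q_k)^{1/p}$. For the left-hand side we must bound $\widehat{f_k\mu}$ from below on a large set. The natural candidate is the dual cuboid
\[
Q_k^* = g_k\left(J_1^k\times\cdots\times J_d^k\right), \qquad J_j^k = \left[-\tfrac{c}{|I_j^k|}, \tfrac{c}{|I_j^k|}\right],
\]
with $c>0$ a small absolute constant. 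Its volume satisfies $\mathcal{L}^d(Q_k^*) \approx \mathcal{L}^d(Q_k)^{-1}$.

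The key pointwise estimate goes as follows. For $\xi\in Q_k^*$ and $x\in Q_k$, write $x = z_k + g_k y$ with $y \in I_1^k\times\cdots\times I_d^k$. Since $g_k$ is orthogonal, $|\xi\cdot(x-z_k)| = |g_k^{-1}\xi \cdot y| \leq \sum_j (c/|I_j^k|)(|I_j^k|/2) \leq dc/2$, which is at most $1/10$ once $c$ is small enough (with the normalisation $e^{-2\pi i x\cdot\xi}$ we need $2\pi\cdot dc/2$ small). Hence
\[
|\widehat{f_k\mu}(\xi)| = \left|\int_{Q_k} e^{-2\pi i (x-z_k)\cdot\xi}\,d\mu(x)\right| \geq \int_{Q_k}\cos\big(2\pi(x-z_k)\cdot\xi\big)\,d\mu(x) \geq \tfrac12\mu(Q_k).
\]
Factoring out the unimodular phase $e^{-2\pi i z_k\cdot \xi}$ is what removes any dependence on the centre of the cuboid. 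This is the only real content of the argument, and it needs no smoothness of $\mu$.

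Putting these together gives
\[
\mu(Q_k)\,\mathcal{L}^d(Q_k)^{-1/q} \lesssim \|\widehat{f_k\mu}\|_{L^q(\rd)} \lesssim \mu(Q_k)^{1/p},
\]
so $\mu(Q_k)^{1-1/p} \lesssim \mathcal{L}^d(Q_k)^{1/q}$. Inserting the flatness bound $\mu(Q_k)\gtrsim \mathcal{L}^d(Q_k)^{\gamma}$ (valid since $1-1/p\geq 0$) yields
\[
\mathcal{L}^d(Q_k)^{\gamma(p-1)/p} \lesssim \mathcal{L}^d(Q_k)^{1/q}.
\]
Since $\mathcal{L}^d(Q_k)\to 0$, this forces $\gamma(p-1)/p \geq 1/q$, which is $\frac{p}{q(p-1)}\leq\gamma$. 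The case $p=1$ is vacuous when read as $1/q\leq 0\cdot\gamma$; it is consistent only in a limiting sense, and we treat it through the inequality $1/q \leq \gamma(p-1)/p$. Setting $p=2$ gives $q\geq 2/\gamma$. No step here is a genuine obstacle: the only care needed is choosing $c$ uniformly, depending only on $d$, so that the implied constants do not depend on $k$.
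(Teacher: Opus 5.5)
Your proposal is correct and is essentially the paper's own proof. You test against $f_k=1_{Q_k}$, bound $|\widehat{f_k\mu}|\gtrsim\mu(Q_k)$ on the origin-centred dual cuboid $Q_k^*$ after factoring out the phase $e^{-2\pi i z_k\cdot\xi}$, and compare $\mu(Q_k)\mathcal{L}^d(Q_k)^{-1/q}\lesssim\mu(Q_k)^{1/p}$ with the flatness lower bound. Your explicit $d$-dependent constant $c$ and the cosine estimate make the paper's ``$\gtrsim$'' step precise. For $p=1$ your computation gives $1\lesssim\mathcal{L}^d(Q_k)^{1/q}\to 0$, so $E(1\to q)$ cannot hold for finite $q$ and the statement is vacuous there; you could state this more directly than ``consistent only in a limiting sense''.
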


\begin{proof}
 Let $Q_k$ be a sequence of cuboids witnessing the $\gamma$-flatness of $\mu$ and write 
\[
Q_k = g_k\left(I_1^k \times \cdots \times I_d^k \right)+z_k
\]
where $g_k \in SO(d)$ is an orthogonal transformation of $\rd$ and $z_k \in \rd$ is a translate and $I_j^k$ are non-degenerate non-empty closed intervals in $\mathbb{R}$ centred at the origin.  Then, by assumption,
\[
\mu(Q_k) \gtrsim \mathcal{L}^d(Q_k)^\gamma
\]
holds   for all $k$ and   $\mathcal{L}^d(Q_k) \to 0$ as $k \to \infty$.  Let $Q_k^*$ be the  origin centred dual of $Q_k$ defined by
\[
Q_k^*=g_k\left((I_1^k)^* \times \cdots \times (I_d^k)^* \right)
\]
where the $(I_j^k)^*$ are non-empty closed intervals in $\mathbb{R}$ centred at the origin with length given by
\[
|(I_j^k)^*| =  \frac{1}{100 \sqrt{d}} |I_j^k|^{-1}.
\]
One easily sees that 
\[
\mathcal{L}^d(Q_k^*) = |(I_1^k)^*| \cdots |(I_d^k)^*| \approx \frac{1}{|I_1^k| \cdots |I_d^k|} = \mathcal{L}^d(Q_k)^{-1}
\]
and that for all $x \in Q_k$ and $\xi \in Q_k^*$ 
\[
| \xi\cdot(x-z_k)  | \leq 1/100.
\]
Let $f_k = 1_{Q_k}$ be the indicator function on $Q_k$. Then
\[
\|f_k\|_{L^p(\mu)} = \mu(Q_k)^{\frac{1}{p}}
\]
and
\begin{align*}
  \| \widehat{f_k\mu} \|_{L^q(\rd)} &\geq \Bigg( \int_{Q_k^*} \Bigg| \int_{Q_k} e^{-2\pi i \xi\cdot x} \,d\mu(x) \Bigg|^q \,d\xi \Bigg)^{\frac{1}{q}}\\
  &= \Bigg( \int_{Q_k^*} \Bigg| \int_{Q_k} e^{-2\pi i \xi\cdot (x-z_k)} \,d\mu(x) \Bigg|^q \,d\xi \Bigg)^{\frac{1}{q}}\\
  &\gtrsim \Bigg( \int_{Q_k^*}   \mu(Q_k)   ^q\, d\xi \Bigg)^{\frac{1}{q}}\\
  &\approx \mu(Q_k)  \mathcal{L}^d(Q_k^*)^{\frac{1}{q}}.
\end{align*}
Therefore, if we assume the $E(p\to q)$ extension estimate 
\eqref{eq:extension} holds, combining the previous two estimates gives that
\begin{equation*}
\frac{\mu(Q_k) }{ \mathcal{L}^d(Q_k)^{\frac{1}{q}}} \approx \mu(Q_k)  \mathcal{L}^d(Q_k^*)^{\frac{1}{q}} \lesssim   \| \widehat{f_k\mu} \|_{L^q(\rd)} \lesssim \|f_k\|_{L^p(\mu)} =  \mu(Q_k)^{\frac{1}{p}}
\end{equation*}
and therefore
\begin{equation*}
\mathcal{L}^d(Q_k)^\gamma \lesssim \mu(Q_k) \lesssim \mathcal{L}^d(Q_k)^{\frac{p}{q(p-1)}}    
\end{equation*}
must hold for all $k$. We conclude that
\[
\frac{p}{q(p-1)} \leq \gamma,
\]
as required.
\end{proof}

An immediate corollary of the above and the observation that all measures are 1-flat is the following universal obstruction.

\begin{cor} \label{unirest}
Let $\mu$ be a finite non-zero compactly supported Borel measure on $\rd$.  If the $E(p\to q)$ extension estimate 
\eqref{eq:extension} holds for $p \geq 1$ and $q \geq 2$, then
\[
\frac{1}{q}+\frac{1}{p}\leq 1.
\]
\end{cor}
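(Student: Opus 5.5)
The plan is to apply Theorem \ref{knapp} with $\gamma=1$. This requires first checking the claim made just before the statement, that every finite non-zero compactly supported Borel measure is $1$-flat. I will give that pigeonholing argument directly.

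Since $\mu$ is compactly supported, fix a large cube $Q_0$, centred at some point and axis-parallel (so $g$ is the identity), which contains $\operatorname{supp}\mu$. For each $k$, partition $Q_0$ into $2^{kd}$ congruent closed subcubes of side $|Q_0|^{1/d}2^{-k}$, where $|Q_0|$ denotes the volume of $Q_0$. These subcubes overlap only on boundaries and together cover $\operatorname{supp}\mu$. Hence $\sum \mu(\text{subcube}) \geq \mu(\rd)>0$, and some subcube $Q_k$ satisfies
\[
\mu(Q_k)\geq 2^{-kd}\mu(\rd) = \frac{\mu(\rd)}{\mathcal{L}^d(Q_0)}\,\mathcal{L}^d(Q_k).
\]
Each $Q_k$ is a cuboid in the sense of the definition: a translate of a product of intervals centred at the origin. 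The constant $\mu(\rd)/\mathcal{L}^d(Q_0)$ depends only on $\mu$, and $\mathcal{L}^d(Q_k)\to 0$. So $\mu(Q_k)\gtrsim \mathcal{L}^d(Q_k)^1$, and $\mu$ is $1$-flat in the sense of Definition \ref{quantflat}.

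Now suppose the $E(p\to q)$ estimate holds with $p\geq 1$ and $q\geq 2$. Theorem \ref{knapp} with $\gamma=1$ gives $\frac{p}{q(p-1)}\leq 1$.

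It remains to rearrange this inequality, which needs care at the endpoint $p=1$. For $p>1$ we have $p-1>0$, so the inequality is equivalent to $p\leq q(p-1)$. Dividing by $pq$ gives $\frac1q \leq 1-\frac1p$, that is, $\frac1q+\frac1p\leq 1$.

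For $p=1$ the expression $\frac{p}{q(p-1)}$ is interpreted as $+\infty$, so Theorem \ref{knapp} says that no $E(1\to q)$ estimate holds. To justify this, return to the last display in the proof of Theorem \ref{knapp}: with $p=1$ it reads $\mu(Q_k)\,\mathcal{L}^d(Q_k)^{-1/q}\lesssim \mu(Q_k)$. This forces $\mathcal{L}^d(Q_k)^{-1/q}\lesssim 1$, which is impossible since $\mathcal{L}^d(Q_k)\to 0$ and $q<\infty$. So the case $p=1$, $q<\infty$ cannot occur.

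If $q=\infty$ is allowed, the claimed inequality is just $\frac1p\leq 1$, which holds trivially. The only real care needed in the whole argument is this endpoint bookkeeping; the $1$-flatness is routine.
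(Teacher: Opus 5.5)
Your proposal is correct and is the paper's route: the paper gets this corollary immediately from Theorem \ref{knapp} with $\gamma=1$, using the pigeonholing observation that every finite compactly supported measure is $1$-flat. You additionally write out that pigeonholing and the endpoint bookkeeping, which the paper leaves implicit: at $p=1$ the Knapp argument rules out any $E(1\to q)$ estimate with $q<\infty$, and at $q=\infty$ the inequality is trivial.
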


The obstruction obtained in Theorem \ref{knapp} gets stronger as $\gamma$ decreases. When $\gamma=1$ it does not provide any non-trivial information (since all measures are 1-flat) and when $\gamma=0$ it provides the strongest possible conclusion which is that there are no non-trivial restriction or extension estimates. In particular, it forces $q'=1$ and  the Fourier transform of an $L^1$ function is continuous   and so in $L^{p'}(\mu)$ for all $p'$, recalling that $\mu$ is compactly supported.  Therefore, 0-flatness forbids all but the trivial estimates which hold for all measures.

As a warm up to more complicated examples, we show how Theorem \ref{knapp} applies in some classical settings. These examples are, of course, not new.

\begin{example} \label{sphereeg}
Here we consider the classical Knapp example for the sphere $S^{d-1}$ equipped with the surface measure $\sigma^{d-1}$.  By taking $Q$ to be an optimal cuboid containing $B(x,\delta) \cap S^{d-1}$ for some $x \in S^{d-1}$ we note that $Q$ has one short side of length $\approx \delta^2$ and $(d-1)$ long sides of length $\approx \delta$.  Therefore,  
\[
\sigma^{d-1} (Q) \approx \delta^{d-1}
\]
and
\[
\mathcal{L}^d(Q) \approx \delta^{d+1} 
\]
and so $\sigma^{d-1}$ is $\gamma$-flat for $\gamma = \frac{d-1}{d+1}$ and Theorem \ref{knapp}   gives the (conjecturally sharp) bound
\[
\frac{p}{q(p-1)} \leq  \frac{d-1}{d+1}.
\]
We also see that $\sigma^{d-1}$ is $(\gamma,\upsilon)$-flat for $\gamma$ as above and $\upsilon=\frac{d+1}{2}$.
\end{example}

\begin{example} \label{coneeg}
Here we consider the classical Knapp example for the cone  
\[
C^{d-1}=\{(x,|x|) : x \in [0,1]^{d-1}\}
\]
 equipped with the surface measure $\mu$.  By taking $Q$ to be an optimal cuboid containing the $\delta$-neighbourhood (inside the cone) of the lift of a unit line through zero in $\mathbb{R}^{d-1}$ onto $C^{d-1}$ we get a cuboid with one long side of length $\approx 1$ and $(d-2)$ intermediate sides of length $\delta$ and one short side of length $\approx \delta^2$.  Then
\[
\mu(Q) \approx \delta^{-1} \, \delta^{d-1} = \delta^{d-2}
\]
and
\[
\mathcal{L}^d(Q) \approx \delta^{d-2} \delta^2 = \delta^{d} 
\]
and so $\mu$ is $\gamma$-flat for  $\gamma = \frac{d-2}{d}$  and Theorem \ref{knapp}    gives the (conjecturally sharp) bound
\[
\frac{p}{q(p-1)} \leq  \frac{d-2}{d}.
\]
We also see that $\mu$ is $(\gamma,\upsilon)$-flat for $\gamma$ as above and $\upsilon=\frac{d}{2}$.
\end{example}

\subsection{$L^p$-improving estimates} \label{sec:improving}

Another important problem in harmonic analysis is to determine certain $L^p$-improving properties for the convolution operator, see \cite{hare, oberlin, Ste76, taowright}. More precisely, we say $\mu$ satisfies the $I(p \to q)$ improving estimate  if
 \begin{equation} \label{improvingest}
  \|f \ast \mu \|_{L^q(\rd)} \lesssim \|f \|_{L^p(\rd)} 
 \end{equation}
holds  for all $f \in L^p(\rd)$.  That is,  $\mu$  `improves'  all $L^p$ functions to   $L^q$  functions when used to define  a convolution operator. Stein \cite{Ste76} initiated the study of such  $I(p \to q)$ improving estimates and proposed the problem of trying to characterise which $I(p \to q)$ improving estimates hold in terms of the `size' of $\mu$.  It is straightforward to see that all finite measures satisfy the $I(p \to p)$ improving estimate and so the challenge is to establish \eqref{improvingest} for $q>p$.

The fact that being $\gamma$-flat inhibits the $L^p$-improving properties of a measure $\mu$ appears in  work of Oberlin \cite{oberlin} where certain measures on hypersurfaces were considered, see also \cite{gressman}.  Oberlin also connected the problem to affine dimension (of sets) in \cite{oberlinaffine}, which is  itself directly  connected to $\gamma$-flatness.  In particular, Oberlin obtains the same numerology as we do below, see \cite[Corollary 3, and Theorem 2]{oberlin}, and also \cite[Proposition 2]{oberlinaffine}. As with Theorem \ref{knapp}, we are not aware of the following result appearing in the literature in this generality, but it is a generalisation of a well-known phenomenon.

\begin{thm} \label{thm:improving}
Let $\mu$ be a  $\gamma$-flat measure in $\rd$ for some $\gamma\in [0,1]$.  If the $I(p\to q)$ improving  estimate 
\eqref{improvingest} holds for $p \geq 1$ and $q \geq 1$, then
\[
\frac{1}{p}-\frac{1}{q}\leq \gamma.
\]
\end{thm}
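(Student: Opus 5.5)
Test the $I(p\to q)$ estimate against indicators of slightly enlarged witnessing cuboids, in the spirit of the proof of Theorem \ref{knapp}. Let $Q_k$ witness the $\gamma$-flatness of $\mu$, with centre $z_k$, rotation $g_k$ and side intervals $I_j^k$. Let $\widetilde{Q}_k = g_k(2I_1^k\times\cdots\times 2I_d^k)$ be the origin-centred cuboid with doubled sides. Then $\mathcal{L}^d(\widetilde Q_k) = 2^d\mathcal{L}^d(Q_k)$, and the point is that $\widetilde Q_k$ contains $x - Q_k$ for all $x$ in a comparable region. Set $f_k = 1_{\widetilde Q_k}$, so that
\[
\|f_k\|_{L^p(\rd)} = \mathcal{L}^d(\widetilde Q_k)^{1/p} \approx \mathcal{L}^d(Q_k)^{1/p}.
\]

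Next I bound $f_k\ast\mu$ from below. We have $f_k\ast\mu(x) = \mu(x-\widetilde Q_k)$. Let $x\in z_k + g_k(\tfrac12 I_1^k\times\cdots\times\tfrac12 I_d^k)$ and $y\in Q_k$. Then in each rotated coordinate $|(x-y)_j|\le \tfrac14|I_j^k|+\tfrac12|I_j^k| < |I_j^k|$, so $x-y\in\widetilde Q_k$ (using that $\widetilde Q_k$ is symmetric). Hence $Q_k\subseteq x-\widetilde Q_k$, and so $f_k\ast\mu(x)\ge\mu(Q_k)$ on a set of Lebesgue measure $2^{-d}\mathcal{L}^d(Q_k)$. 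This gives
\[
\|f_k\ast\mu\|_{L^q(\rd)} \gtrsim \mu(Q_k)\,\mathcal{L}^d(Q_k)^{1/q}.
\]
This is the only geometric step, and it is the one to get right: the bookkeeping of the half-sized cuboid must ensure the constants are independent of $k$ and of the eccentricity of $Q_k$. Working in the rotated coordinates given by $g_k$ makes the check coordinatewise, so eccentricity causes no difficulty.

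Combining the two estimates with the assumed bound \eqref{improvingest} and with $\gamma$-flatness gives
\[
\mathcal{L}^d(Q_k)^{\gamma+1/q} \lesssim \mu(Q_k)\,\mathcal{L}^d(Q_k)^{1/q} \lesssim \mathcal{L}^d(Q_k)^{1/p}.
\]
Since $\mathcal{L}^d(Q_k)\to 0$, this forces $\gamma + 1/q \ge 1/p$, that is, $\frac1p-\frac1q\le\gamma$, as required.

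If $p=\infty$ or $q=\infty$ the same argument applies with the conventions $1/\infty = 0$: the $L^\infty$ norms are $1$ and $\ge\mu(Q_k)$ respectively.
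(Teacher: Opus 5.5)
Your proof is correct and is essentially the paper's argument: the paper also tests against $1_{Q_k'}$ with $Q_k' = 2(Q_k - z_k)$, and uses the inclusion $Q_k \subseteq x - Q_k'$ to get $f_k \ast \mu \geq \mu(Q_k)$ on a set of volume $\approx \mathcal{L}^d(Q_k)$. The differences are cosmetic: the paper checks the inclusion for all $x \in Q_k$ rather than on the half-sized cuboid, and passes to the $L^q$ norm via Jensen's inequality, whereas your direct pointwise lower bound on a set of measure $2^{-d}\mathcal{L}^d(Q_k)$ is slightly cleaner.
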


\begin{proof}
 Let $Q_k$ be a sequence of cuboids witnessing the $\gamma$-flatness of $\mu$  with centres $z_k \in \rd$ and let $f_k=1_{Q_k'}$ be the indicator function on the cuboid
 \[
 Q_k':=2(Q_k-z_k)
 \]
 which is formed by translating  $Q_k$ to be centred at the origin and then enlarging it by a factor of 2.   First, note that
 \begin{equation} \label{improving1}
   \|f_k  \|_{L^p(\rd)} = \mathcal{L}^d(Q_k')^{1/p} \approx_d  \mathcal{L}^d(Q_k)^{1/p} .
 \end{equation}
On the other hand, by Jensen's inequality,
 \[
 \left(\frac{1}{\mathcal{L}^d(Q_k)} \int_{Q_k} f_k \ast \mu \, dx\right)^q \leq  \frac{1}{\mathcal{L}^d(Q_k)} \int_{Q_k} (f_k \ast \mu)^q \, dx  \leq \frac{1}{\mathcal{L}^d(Q_k)} \|f_k \ast \mu \|_{L^q(\rd)}^q
 \]
 and so
 \begin{align}
  \|f_k \ast \mu \|_{L^q(\rd)} &\geq   \mathcal{L}^d(Q_k)^{1/q-1}   \int_{Q_k} f_k \ast \mu \, dx \nonumber  \\
  &=  \mathcal{L}^d(Q_k)^{1/q-1}   \int_{Q_k} \mu(x-Q_k') \, dx \nonumber \\
  &\geq \mathcal{L}^d(Q_k)^{1/q}  \frac{1}{\mathcal{L}^d(Q_k)} \int_{Q_k} \mu(Q_k) \, dx \nonumber \\
  & =  \mathcal{L}^d(Q_k)^{1/q}    \mu(Q_k)   \label{improving2}.
 \end{align}
 In the penultimate line above we crucially used that, for all $x \in Q_k$, 
 \begin{equation} \label{contain}
 Q_k \subseteq x-Q_k'  
  \end{equation}
 and this is where we needed to consider the translated and enlarged cuboid $Q_k'$.  To justify \eqref{contain}, write $Q_k=Q_k^0+z_k$ where $Q_k^0$ is the corresponding origin centred cuboid.  Then we need to show that, for all $x_0 \in Q_k^0$, 
 \[
 Q_k^0+z_k \subseteq  x_0+z_k - 2Q_k^0 \   \Leftrightarrow  \  Q_k^0 \subseteq  x_0  - 2Q_k^0 
 \]
 and this is then an  elementary inclusion.
  
 Combining the estimates \eqref{improving1} and \eqref{improving2} and assuming that $\mu$ satisfies the $I(p \to q)$ improving estimate \eqref{improvingest}, we get
 \[
 \mathcal{L}^d(Q_k)^{1/q}    \mu(Q_k)  \leq   \|f_k \ast \mu \|_{L^q(\rd)} \lesssim \|f_k  \|_{L^p(\rd)} \approx \mathcal{L}^d(Q_k)^{1/p}
 \]
 and therefore
 \[
 \mu(Q_k) \lesssim \mathcal{L}^d(Q_k)^{1/p-1/q}.
 \]
 In particular, if $\mu$ is $\gamma$-flat, then
 \[
 1/p-1/q \leq \gamma
 \]
 as required.
 \end{proof}

As with Corollary \ref{unirest}, an immediate corollary of the above and the observation that all measures are 1-flat is the following universal obstruction.

\begin{cor} \label{uniimpr}
Let $\mu$ be a finite non-zero compactly supported Borel measure on $\rd$.  If the $I(p\to q)$ improving estimate 
\eqref{eq:extension} holds for $p \geq 1$ and $q \geq 1$, then
\[
\frac{1}{p}-\frac{1}{q}\leq 1.
\]
\end{cor}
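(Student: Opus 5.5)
The plan is to combine Theorem \ref{thm:improving} with the observation that every measure is $1$-flat, exactly as Corollary \ref{unirest} follows from Theorem \ref{knapp}. Once $\mu$ is known to be $1$-flat, Theorem \ref{thm:improving} with $\gamma=1$ gives $\frac{1}{p}-\frac{1}{q}\leq 1$ whenever the $I(p\to q)$ improving estimate \eqref{improvingest} holds. (The statement points to \eqref{eq:extension}, but the intended estimate is clearly \eqref{improvingest}.) So the only substantive step is to check $1$-flatness.

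To show that $\mu$ is $1$-flat, I would use the pigeonholing argument mentioned after Definition \ref{quantflat}. Since $\mu$ is non-zero, finite and compactly supported, its support lies in some cube $[-R,R]^d$. For each $k$, partition this cube into $2^{kd}$ closed subcubes of side $2R\cdot 2^{-k}$. These are cuboids, taking $g$ to be the identity and the centres as translates.

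The subcubes cover the support, so at least one of them, call it $Q_k$, satisfies
\[
\mu(Q_k)\geq 2^{-kd}\mu(\rd)=\frac{\mu(\rd)}{(2R)^d}\,\mathcal{L}^d(Q_k).
\]
Since $\mu(\rd)>0$, this gives $\mu(Q_k)\gtrsim \mathcal{L}^d(Q_k)^1$ with an implicit constant independent of $k$. Moreover $\mathcal{L}^d(Q_k)\to 0$, so $\mu$ is $1$-flat.

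There is no real obstacle here. The only point needing care is that the constant in $\gtrsim$ depends only on $\mu$ and $d$, which is permitted by the paper's conventions.
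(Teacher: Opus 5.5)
Your proposal is correct and follows the paper's route exactly: the paper obtains this as an immediate consequence of Theorem~\ref{thm:improving} with $\gamma=1$, combined with the pigeonholing observation that every finite non-zero compactly supported measure is $1$-flat. You verify that observation carefully with a $k$-independent constant, and you are right that \eqref{improvingest} is the intended estimate. As a side remark, the inequality is automatic in any case, since $p,q\geq 1$ gives $\frac1p-\frac1q<\frac1p\leq 1$.
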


Further, when $\mu$ satisfies the strongest form of flatness, then we see that no non-trivial  $L^p$-improving estimates are possible.

\begin{cor} \label{noimpro}
Suppose  $\mu$ on $\rd$  is   $\gamma$-flat for all $\gamma>0$.   Then $\mu$ does not satisfy the $I(p\to q)$ improving estimate 
\eqref{eq:extension} for $q>p$.
\end{cor}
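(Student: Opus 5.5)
This follows directly from Theorem \ref{thm:improving}, by contradiction. Suppose $\mu$ satisfies the $I(p\to q)$ improving estimate \eqref{improvingest} for some $q>p$, with $p,q\geq 1$. Then $\frac{1}{p}-\frac{1}{q}>0$, so we may fix some $\gamma$ with
\[
0<\gamma<\frac{1}{p}-\frac{1}{q}.
\]
If $\frac{1}{p}-\frac{1}{q}>1$ we can simply take $\gamma\in(0,1]$, since any $\gamma\leq 1$ works. By hypothesis $\mu$ is $\gamma$-flat for this $\gamma$. Theorem \ref{thm:improving} then gives $\frac{1}{p}-\frac{1}{q}\leq \gamma$, which contradicts the choice of $\gamma$. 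Hence no improving estimate with $q>p$ can hold.

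The only point needing care is that Theorem \ref{thm:improving} is stated for $\gamma\in[0,1]$. This is why we choose $\gamma\leq 1$, which is always possible since we only need $\gamma$ small and positive.

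We should also note why the constant in the definition of $\gamma$-flatness causes no trouble. The proof of Theorem \ref{thm:improving} yields $\mathcal{L}^d(Q_k)^\gamma\lesssim \mu(Q_k)\lesssim \mathcal{L}^d(Q_k)^{1/p-1/q}$. Because $\mathcal{L}^d(Q_k)\to 0$, this forces $1/p-1/q\leq\gamma$ regardless of the implicit constants, which are allowed to depend on the fixed $\gamma$. There is therefore no real obstacle: the corollary is an immediate quantifier argument on top of the theorem.
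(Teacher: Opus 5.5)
Your proposal is correct and matches the paper, which states this corollary as an immediate consequence of Theorem \ref{thm:improving} applied with any $\gamma\in(0,\frac{1}{p}-\frac{1}{q})$, exactly as you do. One minor remark: for $p,q\geq 1$ one always has $\frac{1}{p}-\frac{1}{q}\leq 1$, so your case $\frac{1}{p}-\frac{1}{q}>1$ never arises, though treating it does no harm.
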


\ 

\subsection{Fourier decay:~bounding the Fourier dimension and spectrum}

\subsubsection{The Fourier dimension and Fourier spectrum} \label{sec:decay}

In this section we recall the definitions of the Fourier dimension and spectrum and describe some of their basic properties. Decay of the Fourier transform is a fundamental property in Fourier analysis and describes many geometric and analytic features of the measure in question. The optimal decay rate is described by the \emph{Fourier dimension}  of $\mu$, defined by
\begin{equation*}
  \fd \mu = \sup\Big\{ s \in \mathbb{R}:   \sup_{\xi\in\rd}\big| \widehat{\mu}(\xi) \big|^2|\xi|^{s}<\infty \Big\}.
\end{equation*}
The \emph{Fourier dimension} of a  set  $X \subseteq \rd$ is then
\begin{equation*}
  \fd X = \sup\Big\{ s\in[0,d] : \exists \mu\text{ on }X \text{ such that  }  \sup_{\xi\in\rd}\big| \widehat{\mu}(\xi) \big|^2|\xi|^{s}<\infty \Big\}.
\end{equation*}
The \emph{Hausdorff dimension} of $X$, denoted by $\hd X$, may also be characterised by Fourier decay, but in an $L^2$ sense.  This correspondence  opens the door to many deep connections between fractal geometry and Fourier analysis.  In particular, 
\begin{equation*}
    \hd X = \sup\bigg\{ s\in[0,d] :\exists \mu\text{ on }X \text{ such that  }  \int_{\rd} \big| \widehat{\mu}(\xi) \big|^{2}|\xi|^{s - d} \,d\xi < \infty \bigg\}.
\end{equation*}
 The analogue for measures is the  \emph{Sobolev dimension} of $\mu$ which is defined by
\begin{equation*}
    \sd \mu  = \sup\bigg\{ s \in \mathbb{R} : \int_{\rd} \big| \widehat{\mu}(\xi) \big|^{2}|\xi|^{s - d} \,d\xi < \infty \bigg\}.
\end{equation*}
When it is less than $d$ (which is typical for us), the Sobolev dimension of a measure is also known as the energy dimension, the correlation dimension, or the $L^2$-dimension. We say a set $X$ is \emph{Salem} if $\hd X = \fd X$.  Such sets have `optimal Fourier analytic structure' and are useful in many applications.  As such, it is desirable to determine whether  specific examples are Salem or not. Similarly, we say a measure $\mu$ on $\rd$ is \emph{Salem} if $\hd \mu = \min\{\fd \mu,d\}$. If $\mu$ is supported on a Lebesgue null set, as most of the measures we consider are, then $\fd \mu \leq d$ and the definition of a Salem measure simplifies.  The \emph{Hausdorff dimension} of $\mu$ is defined by
\[
\hd \mu = \inf\{ \hd E: \mu(E)>0\}.
\]
Given that the Fourier dimension is about $L^\infty$ control of the Fourier transform and the Hausdorff and Sobolev dimensions are about $L^2$ control, it is natural to explore $L^p$ control for intermediate $p$ in an appropriate sense.  This is captured by the  Fourier spectrum, originally defined  in \cite{Fra24}, which   is a   family of dimensions that continuously interpolate between the   Fourier and Hausdorff dimensions for sets and the Fourier and Sobolev dimensions for measures.  First, define the $(s,\theta)$-energies for $s \in \mathbb{R}$ and $\theta\in(0,1]$ as
\begin{equation*}
    \J_{s,\theta} (\mu) = \bigg( \int_{\rd} \big| \widehat{\mu}(\xi) \big|^{\frac{2}{\theta}} |\xi|^{\frac{s}{\theta}-d} \,d\xi \bigg)^\theta,
\end{equation*}
and for $\theta = 0$ as
\begin{equation*}
    \J_{s,0}(\mu) = \sup_{\xi\in\rd}\big| \widehat{\mu}(\xi) \big|^2|\xi|^{s}.
\end{equation*}
Then the \emph{Fourier spectrum} of $\mu$ at $\theta\in[0,1]$ is given by
\begin{equation*}
    \fs \mu = \sup\{ s \in \mathbb{R} : \J_{s,\theta}(\mu)<\infty \},
\end{equation*}
and the \emph{Fourier spectrum} of $X$ at $\theta\in[0,1]$ is
\begin{equation*}
    \fs X = \sup\{ s\in[0,d] :\exists \mu\text{ on }X \text{ such that  }  \J_{s,\theta}(\mu) < \infty \}.
\end{equation*}
We may always assume the $\mu$ used to witness the Fourier dimension and spectrum of $X$ are compactly supported, even when $X$ is unbounded, see \cite[Lemma 1]{ekstrom} and \cite[Theorem 1.5]{Fra24}.  The dimensions mentioned above satisfy 
\[
0 \leq \fd X = \fd^0 X \leq \fs X\leq \fd^1 X = \hd X \leq d.
\]
Moreover, the Fourier spectrum $\fs X$ is non-decreasing and continuous for $\theta\in[0,1]$.  For measures, the Fourier spectrum is concave, continuous on $(0,1]$ and satisfies 
\[
0 \leq \fd \mu = \fd^0 \mu \leq \fs \mu \leq \fd^1 \mu =  \sd \mu.
\]
 Moreover, for compactly supported measures it is continuous on the closed interval $[0,1]$.  Importantly, it was proved in \cite{CFdO24a+} that if $\mu$ is compactly supported, then
\begin{equation} \label{bound}
\fs \mu \leq \fd \mu + d \theta.
\end{equation}
Despite its recent inception, the Fourier spectrum has already seen several applications  in both fractal geometry and harmonic analysis; see, for example, \cite{Fra24, pham} for applications to the distance set problem,  \cite{FdO24+} for applications to exceptional set estimates for orthogonal projections,    and \cite{CFdO24b+} where it was used to study the Fourier restriction problem (in the positive direction).

There are various enemies of Fourier decay.  For example, if a measure is supported on a hyperplane then it cannot decay at all in directions in the orthogonal complement of the hyperplane. On the other hand, the arithmetic resonance in the middle third Cantor set prevents any measure supported on the Cantor set from decaying at all.  Nevertheless, the Fourier spectrum captures more nuanced average decay in both of these settings.   The goal now is to use the quantitative flatness framework described in the previous section to inhibit Fourier decay.  This turns out to be more challenging than with the Fourier restriction problem, but we are able to make some progress, which will turn out to be useful in several specific settings later on.  Our main interest is in bounding the Fourier dimension from above, but we will often state our results in terms of the Fourier spectrum since these statements contain the Fourier dimension estimates (at $\theta=0$) and contain strictly more information whenever we can beat the estimate
\[
\fs \mu \leq \fd \mu +d \theta
\]  
which always holds.

\subsubsection{Bounding the Fourier dimension via Fourier restriction}

In this section we derive general upper bounds for the Fourier dimension and spectrum in terms of quantitative flatness, thus providing a Knapp style obstruction in the Fourier decay problem. 

The  Stein--Tomas restriction theorem ensures a range of $q$ for which the $E(2\to q)$ extension estimate   holds  in terms of the Fourier dimension and Frostman dimensions of a measure.  The \textit{Frostman dimension} of $\mu$  with support given by $X$ is defined by
\begin{align*}
\frd \mu = \sup \Big\{ s \geq 0 :  \exists C>0 \ , \ \forall \ r>0 \  ,  \ \forall x \in \rd \  ,  \  \mu(B(x,r)) \leq Cr^{s} \Big\} .
\end{align*}
 Therefore, since Theorem \ref{knapp} uses $\gamma$-flatness to inhibit extension estimates, we may bound the Fourier dimension from above by appealing to $\gamma$-flatness. 

\begin{cor} \label{fourier}
Let $\mu$ be a  $\gamma$-flat measure in $\rd$ for some $\gamma\in [0,1)$.   If $\alpha \in (0,d)$ is the Frostman dimension of $\mu$, then
\[
\fd \mu \leq \frac{2\gamma(d-\alpha) }{1-\gamma}.
\]
\end{cor}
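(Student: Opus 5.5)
We combine the Stein--Tomas restriction theorem with Theorem \ref{knapp}. Stein--Tomas (in Mockenhaupt's and Mitsis's general measure version, with Bak--Seeger for the endpoint) says the following. Suppose $\mu$ satisfies $\mu(B(x,r)) \lesssim r^{a}$ for all $x$ and $r$, and $|\widehat{\mu}(\xi)|^2 \lesssim |\xi|^{-b}$ for all $\xi$. Then the $E(2\to q)$ extension estimate holds for all
\[
q \geq q(a,b) := \frac{2(2d-2a+b)}{b}.
\]
Equivalently, $\|\widehat{f\mu}\|_{L^q} \lesssim \|f\|_{L^2(\mu)}$ in this range. We will use the non-endpoint range $q>q(a,b)$, which avoids any subtlety about the endpoint.

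If $\fd \mu = 0$ the bound is trivial, so assume $0<b<\fd\mu$ and $0<a<\alpha$. By the definitions of Fourier and Frostman dimension, both hypotheses of Stein--Tomas hold with these parameters. Hence $E(2\to q)$ holds for every $q>q(a,b)$.

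Next apply Theorem \ref{knapp} with $p=2$: since $\mu$ is $\gamma$-flat, any valid $E(2\to q)$ estimate forces $q \geq 2/\gamma$. If $\gamma=0$ this already gives a contradiction, so no $b>0$ is admissible and $\fd\mu=0$, consistent with the claimed bound. Otherwise, letting $q \downarrow q(a,b)$ gives
\[
\frac{2(2d-2a+b)}{b} \geq \frac{2}{\gamma}, \quad\text{i.e.}\quad \gamma(2d-2a+b) \geq b.
\]
Rearranging gives $b(1-\gamma) \leq 2\gamma(d-a)$. Since $\gamma<1$, this is $b \leq 2\gamma(d-a)/(1-\gamma)$. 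Letting $b\to\fd\mu$ and $a\to\alpha$ yields the stated inequality.

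The main point to get right is the exact numerology and normalisation of Stein--Tomas. Here $\fd\mu$ is defined through $|\widehat\mu|^2$ decay, so decay exponent $b$ means $|\widehat\mu|\lesssim|\xi|^{-b/2}$. The standard Mitsis--Mockenhaupt exponent $p' \geq 2(2d-2a+b)/b$ must be matched to that convention. Also, the theorem needs only uniform ball bounds $\mu(B(x,r))\lesssim r^a$, which is exactly what the Frostman dimension supplies for $a<\alpha$; no lower bounds are required.
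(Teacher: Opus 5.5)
Your proposal is correct and follows essentially the same route as the paper: it plugs the Mockenhaupt--Mitsis (Bak--Seeger endpoint) form of Stein--Tomas, $q \geq 2+4(d-\alpha)/\fd\mu$, into the $p=2$ case of Theorem \ref{knapp} and rearranges. Your handling of the limits $a\to\alpha$, $b\to\fd\mu$, $q\downarrow q(a,b)$ and of the edge cases $\gamma=0$ and $\fd\mu=0$ is a more careful version of the paper's one-line argument.
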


\begin{proof}
Comparing the $p=2$ case of Theorem \ref{knapp} with the Stein--Tomas estimate we must have
\[
2+\frac{4(d-\alpha)}{\fd \mu} \geq \frac{2}{\gamma}
\]
and re-arranging gives the claim. 
\end{proof}

One drawback of Corollary \ref{fourier} is that it requires knowledge of the Frostman dimension of $\mu$.  We can use the general estimate $2\alpha \geq \fd \mu$ due to Mitsis \cite{Mit02} (see also Theorem \ref{basic}) to get the following Frostman independent bound. 

\begin{cor} \label{fouriernofrost}
If  $\mu$ is a  measure in $\rd$ supported on a Lebesgue null set which is $\gamma$-flat for some $\gamma\in [0,1)$, then
\[
\fd \mu \leq 2 \gamma d.
\]
\end{cor}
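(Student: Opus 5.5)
The plan is to combine Corollary \ref{fourier} with Mitsis' inequality $\fd \mu \leq 2\frd \mu$ from \cite{Mit02} (see also Theorem \ref{basic}), and then solve the resulting inequality for $\fd \mu$.

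Write $\alpha = \frd \mu$. If $\fd \mu = 0$ there is nothing to prove, so assume $\fd \mu > 0$. Then Mitsis' inequality gives $\alpha \geq \fd\mu/2 > 0$.

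Next I must rule out $\alpha = d$ (note $\alpha \leq d$ always, since $\mu$ is non-zero). This is where the Lebesgue null support is used, and I expect it to be the only delicate step. Suppose $\alpha=d$. Then for every $s<d$ we have $\mu(B(x,r)) \lesssim r^{s}$, but this alone does not give absolute continuity. So I would avoid needing $\alpha<d$ exactly, and instead apply Corollary \ref{fourier} with the Frostman exponent replaced by any admissible $s\in(0,d)$ with $s \leq \alpha$. Its proof only uses the Stein--Tomas estimate, which holds with any valid Frostman exponent $s$ in place of $\alpha$. This gives
\[
\fd \mu \leq \frac{2\gamma(d-s)}{1-\gamma}.
\]
Any $s<\min\{\alpha,d\}$ is admissible, so letting $s \to \min\{\alpha, d\}$ handles both the case $\alpha<d$ and the case $\alpha=d$. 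The null-set hypothesis ensures $\fd\mu \leq d$, so the Stein--Tomas numerology (which requires $\fd \mu$ finite and positive) applies.

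Finally, substitute the lower bound $s \to \min\{\alpha,d\} \geq \fd \mu/2$. The right-hand side above is decreasing in $s$, so we may replace $s$ by this lower bound, obtaining
\[
\fd\mu \leq \frac{2\gamma(d-\fd\mu/2)}{1-\gamma}.
\]
Multiplying by $1-\gamma>0$ gives $(1-\gamma)\fd\mu \leq 2\gamma d - \gamma \fd\mu$, that is, $\fd\mu \leq 2\gamma d$, as required.
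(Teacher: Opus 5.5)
Your proposal is correct and is the same argument as the paper's: the paper combines Mitsis' bound $\fd\mu \leq 2\alpha$ with Corollary \ref{fourier} and balances the two terms in $\min\{2\alpha, 2\gamma(d-\alpha)/(1-\gamma)\}$ to get $2\gamma d$, which is the same computation you carry out by substituting $\alpha \geq \fd\mu/2$ into the decreasing bound and solving. Your extra care with the case $\alpha = d$ is harmless but unnecessary, since $\gamma$-flatness with $\gamma<1$ already forces $\frd\mu \leq d-1+\gamma < d$ by the paper's later Frostman bound for $\gamma$-flat measures.
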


\begin{proof}
By combining the estimate of Mitsis and Corollary \ref{fourier} we get the bound
\[
\fd \mu \leq \min\left\{2 \alpha, \ \frac{2\gamma(d-\alpha) }{1-\gamma} \right\} \leq 2\gamma d
\]
where the final estimate comes from balancing the two terms in the minimum.
\end{proof}

In order for the estimate from the above corollary to be non-trivial the $\gamma$ needs to be rather small, certainly satisfying $\gamma<\hd \mu/(2d)$.

In \cite{CFdO24b+} the Stein--Tomas estimate was generalised to use information from the Fourier spectrum rather than only the Fourier dimension.  Following the same approach as Corollary \ref{fourier} we get an estimate for the Fourier spectrum.

\begin{cor} \label{fourierspectrum}
Let $\mu$ be a  $\gamma$-flat measure in $\rd$ for some $\gamma\in [0,1)$.  If $\alpha \in (0,d)$ is the Frostman dimension of $\mu$, then
\[
\fs \mu \leq \frac{2\gamma(d-\alpha) }{1-\gamma} + \theta \cdot \frac{\alpha - d \gamma}{1-\gamma}
\]
for all $0 \leq \theta \leq 2\gamma$.
\end{cor}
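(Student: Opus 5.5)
The plan is to follow the proof of Corollary \ref{fourier}, with the classical Stein--Tomas theorem replaced by the Fourier spectrum version of Stein--Tomas from \cite{CFdO24b+}. I would use that result in the following form (to be checked against \cite{CFdO24b+}). If $\mu$ has Frostman dimension $\alpha$, $\theta \in [0,1]$ is fixed, and $s<\fs \mu$ satisfies $s > \theta \alpha$, then the $E(2\to q)$ extension estimate holds for all
\[
q > q_\theta(s) := 2 + \frac{4(d-\alpha)(1-\theta/2)}{s - \theta\alpha}.
\]
At $\theta = 0$ this is exactly the Stein--Tomas exponent $2+4(d-\alpha)/s$ used in Corollary \ref{fourier}, which is a useful consistency check on the form recalled. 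The argument then has three steps.

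First, I dispose of the degenerate case. If $\fs \mu \leq \theta\alpha$, the claimed bound follows directly, because
\[
\frac{2\gamma(d-\alpha) }{1-\gamma} + \theta \cdot \frac{\alpha - d \gamma}{1-\gamma} - \theta \alpha = \frac{\gamma(d-\alpha)(2-\theta)}{1-\gamma} \geq 0 .
\]
So I may assume $\fs\mu > \theta\alpha$ and fix any $s$ with $\theta\alpha < s < \fs\mu$. The generalised Stein--Tomas theorem then gives $E(2\to q)$ for every $q>q_\theta(s)$. The $p=2$ case of Theorem \ref{knapp} says that any such $q$ satisfies $q \geq 2/\gamma$. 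Letting $q \downarrow q_\theta(s)$ gives
\[
2 + \frac{2(d-\alpha)(2-\theta)}{s-\theta\alpha} \geq \frac{2}{\gamma}.
\]

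Second, I rearrange this inequality. Since $s-\theta\alpha>0$, multiplying through gives
\[
\gamma\bigl(s + (d-\alpha)(2-\theta) - \theta\alpha\bigr) \geq s-\theta\alpha,
\]
that is,
\[
s(1-\gamma) \leq 2\gamma(d-\alpha) + \theta(\alpha - d\gamma).
\]
Dividing by $1-\gamma>0$ and letting $s \uparrow \fs\mu$ yields the stated bound.

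The main obstacle is making sure the generalised Stein--Tomas estimate is invoked with the right hypotheses. In particular, I need to identify the admissible range of $\theta$ and the exact form of the exponent in \cite{CFdO24b+}. I expect the restriction $0\leq\theta\leq 2\gamma$ in the statement to come from this range: either a validity condition in that theorem, or the regime where the resulting bound improves on the trivial estimate \eqref{bound}. I would first match the $\theta=0$ endpoint with Corollary \ref{fourier}, and then check the constraint on $\theta$ directly from the cited theorem.
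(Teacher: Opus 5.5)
Your plan follows the paper's proof: compare the $p=2$ case of Theorem \ref{knapp} with the Fourier-spectrum version of Stein--Tomas from \cite{CFdO24b+}, then rearrange. You also recall the exponent $2+\frac{2(d-\alpha)(2-\theta)}{s-\alpha\theta}$ correctly. The gap is that you invoke the estimate under the wrong hypothesis.

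The paper uses it only for $\theta$ with $\fs\mu>d\theta$, not $\fs\mu>\theta\alpha$. This is what a $TT^*$ argument built on $\J_{s,\theta}$ needs. The dyadic pieces of $\widehat{\mu}$ at scale $2^j$ have $L^{2/\theta}$ norm $\lesssim 2^{j(d\theta-s)/2}$, and these must decay before you interpolate with the $L^2\to L^2$ bound $\lesssim 2^{j(d-\alpha)}$.

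Your version is in fact false in the window $\theta\alpha<\fs\mu\le d\theta$. Take the middle-third Cantor measure $\nu$, with $d=1$ and $\alpha=\log 2/\log 3$. Then
\[
\dim^{1/2}_{\mathrm{F}}\nu=\tfrac12\sd(\nu\ast\nu)=\frac{\log(8/3)}{2\log 3}\in(\alpha/2,1/2),
\]
so $\nu$ lies in that window at $\theta=1/2$. Yet $|\widehat{\nu}|$ is bounded below on intervals of fixed length around every $3^n$. Hence $\widehat{\nu}\notin L^q$, and taking $f\equiv 1$ shows that $E(2\to q)$ fails for every finite $q$.

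There was a warning sign: your argument would prove the bound for every $\theta\in[0,1]$, which would make the hypothesis $\theta\le 2\gamma$ pointless.

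The repair is to make the degenerate case $\fs\mu\le d\theta$. In that case you need the right-hand side to be at least $d\theta$, and
\[
\frac{2\gamma(d-\alpha)}{1-\gamma}+\theta\cdot\frac{\alpha-d\gamma}{1-\gamma}-d\theta=\frac{(d-\alpha)(2\gamma-\theta)}{1-\gamma}.
\]
This is non-negative precisely when $\theta\le 2\gamma$, which is exactly where the range in the statement comes from. In the remaining case, choose $s\in(d\theta,\fs\mu)$. Since $\alpha<d$, you still have $s-\theta\alpha>0$, and your rearrangement then goes through verbatim.
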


\begin{proof}
Comparing the $p=2$ case of Theorem \ref{knapp} with the restriction estimate from \cite{CFdO24b+}, we must have
\[
2+ \frac{2(d-\alpha)(2-\theta)}{\fs \mu-\alpha \theta} \geq \frac{2}{\gamma}
\]
for all $\theta \in (0,1)$ such that $\fs \mu > d \theta$ and re-arranging gives the claim. 
\end{proof}

One interesting feature of Corollary \ref{fourierspectrum} is that it is stronger than the estimate coming from Corollary \ref{fourier} combined with basic information about the Fourier spectrum, see \eqref{bound}.  In particular, 
\[
\fs \mu \leq \fd \mu + d \theta \leq  \frac{2\gamma(d-\alpha) }{1-\gamma} + d \theta
\]
but the right hand side is a strictly worse estimate than Corollary \ref{fourierspectrum}, recalling that we are interested in the  $\alpha<d$ regime.  

\subsubsection{Bounding the Fourier dimension using flatness directly}

If a measure is $\gamma$-flat, especially with anisotropic witnessing cuboids and $\gamma<1$, then $\mu$ must have large amounts of mass concentrated in relatively flat regions. This is an enemy of Fourier decay and so one might hope to use this information directly to bound the Fourier dimension from above, that is, without going via Fourier restriction.  It turns out that for this we need the refined concept of $(\gamma,\upsilon)$-flatness, but even in this setting we only have partial success.  We are able to prove some estimates in certain restricted situations and we explain why something stronger cannot hold.  The estimates we obtain here should be compared with those coming from restriction theory in Corollary \ref{fourier2}.  Sometimes one approach is better, sometimes the other is better, as we shall see later. 

We begin with a heuristic discussion which is meant to hint at what one might hope to get from   $(\gamma,\upsilon)$-flatness. To this end, let  $\mu$ be a  $(\gamma,\upsilon)$-flat measure   and $Q_k$ be a sequence of witnessing cuboids. To simplify this heuristic discussion, assume all of the $Q_k$ are centred at the origin. We will revisit the heuristic in full generality when we prove Theorem \ref{fourier4} later. Let
\[
\delta_k = \min_j |I_j^k|
\]
be the length of the shortest side of $Q_k$ and note   that $\delta_k \to 0$ as $k \to \infty$. We also have the estimates 
\[
\mathcal{L}^d(Q_k) \gtrsim \delta_k^\upsilon
\]
and
\[
\mu(Q_k) \gtrsim \mathcal{L}^d(Q_k)^\gamma
\]
by assumption.   Write $Q_k^*$ for the dual cuboid as in the proof of Theorem \ref{knapp}.  Consider the decomposition
\[
\mu = \mu|_{Q_k^c} + \mu|_{Q_k}  
\]
and observe that for $\xi \in Q_k^*$
\begin{align}\label{vgoodbound}
\widehat{\mu|_{Q_k} }(\xi)   = \int_{Q_k} e^{-2\pi i \xi\cdot x} \,d\mu(x) \in B\left( \mu(Q_k),  \mu(Q_k)/10\right).
\end{align}
In particular, for $\xi \in Q_k^*$
\begin{equation*} 
| \widehat{\mu|_{Q_k} }(\xi) | \approx \mu(Q_k) \gtrsim \delta_k^{\upsilon \gamma}.
\end{equation*}
Since the diameter of $Q_k^*$ is $\approx \delta_k^{-1}$, this suggests a bound  $\fd \mu \leq 2 \upsilon \gamma$ might be in play. Let
\[
 A_k^* \subseteq  Q_k^* \setminus B(0,\delta_k^{-1}/1000)
\]
be a cuboid with one side of length $\approx  \delta_k^{-1}$ and the other sides  of  lengths $\approx 1$. We are searching for $\xi$ with $|\xi| \approx \delta_k^{-1}$ for which
\[
\left| \widehat{\mu  } (\xi)\right| \gtrsim \mu(Q_k)
\]
and this is guaranteed if we can find $\xi \in A_k^* $ satisfying 
\[
\left| \widehat{\mu }(\xi) \right|  \gtrsim  \mu(Q_k).
\]
Indeed, if we have such $\xi \in A_k^* $, then 
\[
\left| \widehat{\mu  } (\xi)\right| \gtrsim \mu(Q_k) \gtrsim \mathcal{L}^d(Q_k)^\gamma  \gtrsim \delta_k^{\upsilon \gamma}\approx |\xi|^{-\frac{2\upsilon \gamma}{2}}
\]
and the bound 
\[
\fd \mu \leq 2\upsilon \gamma
\]
would follow.  Therefore,   suppose for all $\xi \in A_k^*$, 
\[
\left| \widehat{\mu}(\xi) \right|  \leq  \mu(Q_k)/10000.
\]
For this to be true, recalling \eqref{vgoodbound}, we require that for all $\xi \in A_k^*$,
\[
\widehat{\mu|_{Q_k^c} }(\xi) \in B\left(-\mu(Q_k), \mu(Q_k)/5\right)
\]
which seems like a very strong requirement.  Indeed, it is asking for the near-constancy of $\widehat{\mu|_{Q_k^c} }(\xi) $ for a very large set of $\xi$.   Any substantial  oscillation in $\widehat{\mu|_{Q_k^c} }$ (for example, a sign change) over this range of frequencies would kill this possibility immediately.  

However, despite the above observations and heuristics, the desired bound
\[
\fd \mu \leq 2 \upsilon \gamma
\]
is not true in general---not even close! 

\begin{example} \label{nosf}
Let $\mu = \phi \mathcal{L}^d$ where $\phi$ is a Schwartz density supported in $[0,1]^d$.  Then we know \emph{a priori} that $\mu$ enjoys rapid Fourier decay.  That is, for all $N \geq 1$,
\[
|\widehat{\mu} (\xi)| \lesssim_N |\xi|^{-N}.
\]
However, by considering $Q$ with dimensions $\delta \times \delta^\eps \times \dots \times \delta^\eps$ for $\eps>0$ we can obtain
\[
\mathcal{L}^d(Q) =\delta^{1+(d-1)\eps} 
\]
and
\[
\mu(Q) \approx  \mathcal{L}^d(Q)
\]
and so we may choose $\upsilon =1+(d-1)\eps$ and  $ \gamma = 1$. However, the upper bound 
\[
\fd \mu \leq 2 \upsilon \gamma = 2(1+(d-1)\eps) \to 2
\]
as $\eps \to 0$ does \emph{not} hold.
\end{example}

In Example \ref{nosf} we could have taken $\eps=0$ but we included the $\eps>0$ to ensure that the diameters of the cuboids tended to zero.  This is not necessary of course.  In fact the hoped for bound can fail much more generally as the next example shows.

\begin{example} \label{nog1}
Consider \emph{any} finite  compactly supported $\mu$ on $\rd$ and decompose $\mu$ into $\approx \delta^{-1}$ many parallel $\delta$ slabs (neighbourhoods of parallel hyperplanes).  Then there must be one of these slabs (which we take as $Q$) for which 
\[
\mu(Q) \gtrsim \delta
\]
and therefore the bound we hoped for above would give
\[
\fd \mu \leq 2
\]
which is clearly false in many cases (especially with $d \geq 3$).  
\end{example}

At this point the reader may be rightly suspicious of the previous two examples because they both had $\gamma=1$.  Recall that in Corollary \ref{fourier2} we necessarily needed $\gamma<1$ to get something non-trivial and so perhaps adding the assumption $\gamma<1$ can save the desired upper bound $2 \upsilon \gamma$.  Next we show that, unfortunately, this is not the case. 

\begin{example} \label{nowithsmallg}
Let $d \geq 4$, $0<\eps<\frac{d-3}{2(d-1)}$ and $E \subseteq \rd$ be a Salem set of dimension $t$ satisfying
\[
\frac{2}{1-2\eps}<t<d-1
\]
and
\[
\ubd E =t.
\]
Note that the constraint on $\eps$ ensures that such $t$ exist. Moreover, suppose $\mu$ is a measure on $E$ which witnesses the Fourier dimension precisely, that is, $\fd \mu = t$. It is straightforward to construct such $E$ and $\mu$, for example, by taking the pushforward of a suitable self-similar set and measure under Brownian motion (which is used to raise the Fourier dimension, see \cite{Kahane}).  Let $\delta>0$ be  small and consider the projection of $E$ onto a $(d-1)$-dimensional subspace.   This projection has upper box dimension at most $t$ and so we may cover it by $\lesssim \delta^{-t' \eps}$ many $\delta^\eps$ cubes for $t'>t$.  The preimage of each of these cubes under the projection is a tube which may be covered by $\lesssim \delta^{-1}$ many $\delta^\eps \times \cdots \times  \delta^\eps \times \delta$ cuboids and, therefore, by pigeonholing,  there exists such a cuboid  $Q$ with 
\[
\mu(Q) \gtrsim \delta^{t' \eps+1}
\]
and
\[
\mathcal{L}^d(Q) \approx \delta^{(d-1) \eps+1}.
\]
Therefore, $\mu$ is $(\gamma,\upsilon)$-flat for all
\[
1>\gamma > \frac{\eps t + 1}{ \eps(d-1)+1}
\]
and
\[
\upsilon=\eps(d-1)+1.
\]
Therefore, the heuristic upper bound 
\[
\fd \mu \leq 2 \upsilon \gamma = 2(\eps t +1)
\]
does \textbf{not} hold for all sufficiently small $\eps$, since $\fd \mu \geq t>2$.  By considering an elementary optimisation problem (in $t$ and $\eps)$ to make $\gamma$ as small as possible we find that we can take $\gamma$ arbitrarily close to 
\[
\frac{8(d-1)}{8(d-1)+(d-3)^2}
\]
by choosing $t$ as small as allowed by the constraint above and choosing $\eps = \frac{d-3}{4(d-1)}$. This can then be made arbitrarily small by taking $d$ large.  
\end{example}

The above example is disappointing but, perhaps surprisingly, we can recover the bound $\fd \mu \leq 2 \upsilon \gamma$ in some fairly general situations.  First, provided the product $\upsilon \gamma$ is strictly less than 1, we can recover the desired bound. 

\begin{thm} \label{fourier2-}
Let $\mu$ be a  $(\gamma, \upsilon)$-flat measure in $\rd$ such that  $\upsilon\gamma<1$.  Then
\[
\fd \mu \leq 2 \upsilon \gamma.
\] 
\end{thm}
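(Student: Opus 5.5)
We argue by contradiction. Suppose $\fd \mu > s > 2\upsilon\gamma$, so that $|\widehat{\mu}(\xi)| \lesssim \min\{1,|\xi|^{-s/2}\} \leq |\xi|^{-s/2}$ for all $\xi$. Since $\upsilon\gamma<1$, we may choose $s<2$; this is the only place the hypothesis $\upsilon\gamma<1$ enters, and it is essential. Note also that $\upsilon \geq 1$ forces $\gamma<1$. Let $Q_k$ witness the $(\gamma,\upsilon)$-flatness, with shortest side $\delta_k \to 0$. Since $\mu$ is compactly supported, we may first truncate any side of $Q_k$ longer than $\operatorname{diam}(\operatorname{supp}\mu)$ without losing mass. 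This only decreases $\mathcal{L}^d(Q_k)$, so $\mu(Q_k) \gtrsim \mathcal{L}^d(Q_k)^\gamma$ still holds, and for small $\delta_k$ it does not change $\delta_k$. The relation $\mathcal{L}^d(Q_k)\gtrsim \delta_k^\upsilon$ may weaken under truncation, but we only need the consequence $\mu(Q_k) \gtrsim \delta_k^{\upsilon\gamma}$ of the original cuboids. So all sides are $\lesssim 1$, and the dual cuboid $Q_k^*$ from the proof of Theorem \ref{knapp} has all sides $\gtrsim 1$ and longest side $\approx \delta_k^{-1}$.

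The main step is a smooth majorant of $1_{Q_k}$ with frequency support in a dilate of $Q_k^*$. Fix a Schwartz function $\psi_0 \geq 0$ on $\rd$ with $\psi_0 \geq 1$ on $[-1/2,1/2]^d$ and $\widehat{\psi_0}$ supported in a fixed ball, for instance $\psi_0 = c|\check\phi|^2$ for a suitable bump $\phi$. Let $\psi_k = \psi_0\circ A_k^{-1}$, where $A_k$ is the affine map taking $[-1/2,1/2]^d$ onto $Q_k$. Then $\psi_k \geq 1_{Q_k}$, $\|\widehat{\psi_k}\|_\infty \lesssim \mathcal{L}^d(Q_k)$, and $\widehat{\psi_k}$ is supported in $CQ_k^*$, a cuboid of comparable dimensions centred at the origin. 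Parseval then gives
\[
\mu(Q_k) \leq \int \psi_k \, d\mu = \int \widehat{\psi_k}(\xi)\overline{\widehat{\mu}(\xi)}\, d\xi \lesssim \mathcal{L}^d(Q_k) \int_{CQ_k^*} |\xi|^{-s/2}\, d\xi .
\]

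We estimate this integral by using $|\xi| \geq |\xi_1|$, where $\xi_1$ is the coordinate along the longest side of $CQ_k^*$, of length $\approx \delta_k^{-1}$. Integrating in that direction gives $\int_{-\delta_k^{-1}}^{\delta_k^{-1}}|t|^{-s/2}\,dt \approx \delta_k^{s/2-1}$, which is finite precisely because $s<2$. The cross-section has measure $\approx \mathcal{L}^d(Q_k^*)\,\delta_k$. Hence
\[
\int_{CQ_k^*}|\xi|^{-s/2}\,d\xi \lesssim \mathcal{L}^d(Q_k^*)\,\delta_k^{s/2} \approx \mathcal{L}^d(Q_k)^{-1}\delta_k^{s/2},
\]
and so $\mu(Q_k) \lesssim \delta_k^{s/2}$.

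On the other hand, the original witnessing cuboids give $\mu(Q_k) \gtrsim \mathcal{L}^d(Q_k)^\gamma \gtrsim \delta_k^{\upsilon\gamma}$. Combining, $\delta_k^{\upsilon\gamma}\lesssim \delta_k^{s/2}$ for all $k$, which contradicts $s/2>\upsilon\gamma$ as $\delta_k\to 0$. Therefore $\fd\mu \leq 2\upsilon\gamma$.

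The main obstacle is the frequency-side integral. A naive bound, replacing $|\xi|^{-s/2}$ by $1$, only gives $\mathcal{L}^d(Q_k)\mathcal{L}^d(Q_k^*)\approx 1$, which is useless. One has to exploit decay along the long direction of $Q_k^*$, and this requires $s<2$; Examples \ref{nog1} and \ref{nowithsmallg} indicate that this restriction cannot simply be dropped.
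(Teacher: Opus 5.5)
Your argument is correct, and it takes a different, more self-contained route than the paper. The paper deduces the statement from Theorem \ref{fourier2} with $m=1$. It projects $\mu$ onto the line spanned by the shortest side of $Q_k$ and uses $\fd \pi_k\mu \geq \fd \mu$. It then invokes Mitsis' Frostman bound $\frd \pi_k\mu \geq \min\{\fd \pi_k\mu/2,1\}$, which gives $\mu(Q_k)\leq \pi_k\mu(\pi_k Q_k)\lesssim \delta_k^{\min\{s/2,1\}}$. The independence of the implied constant from $k$, i.e.\ from the projection direction, is justified only by ``inspecting Mitsis' proof''.

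You instead prove the needed slab estimate directly in $\rd$, using the anisotropic smooth majorant $\psi_k\geq 1_{Q_k}$ with Fourier support in $CQ_k^*$ together with Parseval. In your computation, $\mathcal{L}^d(Q_k)$ times the cross-sectional measure of $CQ_k^*$ is $\approx \delta_k$. What remains is the one-dimensional Mitsis computation $\int_{|t|\lesssim\delta_k^{-1}}|t|^{-s/2}\,dt\approx\delta_k^{s/2-1}$ along the direction of the shortest side. So the two proofs share the same mechanism, and your version is Mitsis' argument for the projected measure lifted to $\rd$. What yours buys is that it avoids both black boxes (Mitsis' lemma and monotonicity of $\fd$ under projection) and makes the $k$-independence of constants explicit. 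The paper's formulation is shorter and gives the $m$-comparable-sides version at once. Yours adapts equally well: integrate $|\xi'|^{-s/2}$ over the $m$ long directions of $Q_k^*$, which requires $s<2m$.

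Your truncation step is unnecessary. The identity $\mathcal{L}^d(Q_k)\,\mathcal{L}^d(Q_k^*)\approx 1$ and the fact that the longest side of $Q_k^*$ is $\approx\delta_k^{-1}$ both hold regardless of how long the other sides of $Q_k$ are.
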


The previous result  follows from the more general result below.

\begin{thm} \label{fourier2}
Let $\mu$ be a  $(\gamma, \upsilon)$-flat measure in $\rd$ and  suppose the sequence of witnessing cuboids are such that the shortest  $m \geq 1$ sides are all of comparable length up to uniform constants.   If $\upsilon\gamma<m$, then
\[
\fd \mu \leq 2 \upsilon \gamma.
\] 
\end{thm}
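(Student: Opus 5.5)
The plan is to bound $\mu(Q_k)$ from above by testing $\mu$ against a smooth bump adapted to $Q_k$ and moving to the Fourier side, rather than trying to find a single large value of $\widehat\mu$ as in the heuristic above. Suppose, for contradiction, that $\fd\mu > 2\upsilon\gamma$. Since $\upsilon\gamma<m$, we may fix $s$ with $2\upsilon\gamma < s < \min\{\fd \mu, 2m\}$, so that $|\widehat\mu(\xi)| \lesssim |\xi|^{-s/2}$ for all $\xi$ (with $|\widehat\mu|\leq \mu(\rd)$ near the origin).

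First I construct the test functions. Fix a non-negative function $\psi$ on $\rd$ with $\psi \geq 1$ on $[-1/2,1/2]^d$ and $\widehat\psi$ supported in a fixed ball. For instance, take $\psi = c|\check\phi|^2$ with $\phi$ a smooth bump near $0$, using that $\check \phi(0)\neq 0$ and rescaling. Write $Q_k = g_k(I_1^k\times\cdots\times I_d^k)+z_k$ and set $\psi_k(x) = \psi(A_k^{-1} g_k^{-1}(x-z_k))$, where $A_k=\mathrm{diag}(|I_1^k|,\dots,|I_d^k|)$. Then $\psi_k \geq 1_{Q_k}$ and $\psi_k\geq 0$. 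Moreover, $|\widehat{\psi_k}| \lesssim \mathcal{L}^d(Q_k)$, and $\widehat{\psi_k}$ is supported in a bounded dilate $CQ_k^*$ of the dual cuboid; the translation by $z_k$ only introduces a unimodular phase. By Parseval,
\[
\mathcal{L}^d(Q_k)^\gamma \lesssim \mu(Q_k) \leq \int \psi_k\, d\mu = \int \widehat{\psi_k}(\xi)\,\overline{\widehat\mu(\xi)}\,d\xi \lesssim \mathcal{L}^d(Q_k)\int_{CQ_k^*} |\widehat\mu(\xi)|\,d\xi .
\]

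The main step is estimating $\int_{CQ_k^*}|\widehat\mu|$, and this is where the hypothesis on the $m$ shortest sides enters. Those sides all have length $\approx\delta_k$, so $Q_k^*$ has $m$ sides of length $\approx \delta_k^{-1}$, spanning an $m$-dimensional subspace $V_k$. The remaining sides have lengths $|I_j^k|^{-1}$, whose product is $P_k \approx \mathcal{L}^d(Q_k)^{-1}\delta_k^{m}$. Write $\xi = \eta+\zeta$ with $\eta\in V_k$, and use $|\xi| \geq |\eta|$ together with the decay bound $|\widehat\mu(\xi)|\lesssim \min\{1,|\eta|^{-s/2}\}$. Fubini then gives
\[
\int_{CQ_k^*}|\widehat\mu| \lesssim P_k \int_{\{\eta \in V_k: |\eta|\lesssim \delta_k^{-1}\}} \min\{1,|\eta|^{-s/2}\}\,d\eta \approx P_k\,\delta_k^{-(m-s/2)},
\]
where the last step uses $s/2<m$; this is exactly why the condition $\upsilon\gamma<m$ is needed, since otherwise the $\eta$-integral gives no power gain. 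Combining the displays yields
\[
\mathcal{L}^d(Q_k)^\gamma \lesssim \mathcal{L}^d(Q_k)\,\mathcal{L}^d(Q_k)^{-1}\delta_k^{m}\,\delta_k^{-m+s/2} = \delta_k^{s/2}.
\]

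Finally, $(\gamma,\upsilon)$-flatness gives $\mathcal{L}^d(Q_k)^\gamma \gtrsim \delta_k^{\upsilon\gamma}$, so $\delta_k^{\upsilon\gamma}\lesssim \delta_k^{s/2}$. Since $\delta_k\to 0$ (because $\mathcal{L}^d(Q_k)\to 0$), this forces $s\leq 2\upsilon\gamma$, contradicting the choice of $s$. Hence $\fd\mu\leq 2\upsilon\gamma$, and Theorem \ref{fourier2-} is the case $m=1$. The step I expect to need the most care is the construction of $\psi_k$ with compact Fourier support and $\psi_k\geq 1_{Q_k}$ after the anisotropic rescaling, together with the bookkeeping showing that $\widehat{\psi_k}$ has sup norm $\approx\mathcal{L}^d(Q_k)$ and is supported in a bounded dilate of $Q_k^*$. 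If compact Fourier support is inconvenient, I would instead use a Schwartz majorant and control the tail of $\widehat{\psi_k}$ outside $CQ_k^*$ by its rapid decay, where the trivial bound $|\widehat\mu|\leq\mu(\rd)$ suffices.
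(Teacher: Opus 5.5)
Your argument is correct. It reaches the key estimate $\mu(Q_k)\lesssim\delta_k^{s/2}$ by a different route from the paper.

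\textbf{The paper's route.} It projects onto the $m$-dimensional subspace spanned by the shortest sides and observes that $\pi_k(Q_k)$ lies in a ball of radius $\approx\delta_k$. It then invokes two cited facts: Fourier dimension does not drop under projection, and Mitsis' bound $\frd\pi_k\mu\geq\min\{\fd\pi_k\mu/2,m\}$. The uniformity of the implicit constants in $k$ is justified only by ``inspecting Mitsis' proof''.

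\textbf{Your route.} You stay on the Fourier side: Parseval against an anisotropic bump with compactly supported Fourier transform, then Fubini in $\xi=\eta+\zeta$. The trivial integration over $V_k^\perp$ produces the factor $P_k$, which cancels the long sides of $Q_k$; this is the Fourier-side counterpart of projecting. Your $\eta$-integral over an $m$-dimensional ball of radius $\approx\delta_k^{-1}$ is Mitsis' argument carried out inside $V_k$. It is also exactly where $s/2<m$ (that is, $\upsilon\gamma<m$) is used, mirroring the $\min\{s/2,m\}$ in the paper.

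\textbf{What each buys.} The paper's proof is shorter given its citations. Yours is self-contained and makes the uniformity in $k$ automatic. The only constants are those attached to $\psi$, the decay constant of $\widehat\mu$, and the comparability constant.

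Your computation is also more flexible. Comparability of the short sides enters only when you evaluate $\mathcal{L}^d(Q_k)P_k\approx\delta_k^m$ and the $\eta$-integral. So the same Parseval estimate runs for non-comparable short sides and gives a bound in terms of the individual side lengths. That bound is never worse than, and in general better than, covering by isotropic balls, which the paper mentions after Theorem \ref{fourier2} as the best available direct approach.
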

\begin{proof}
Let $\pi_k$ be the projection of $\mathbb{R}^d$ onto the $m$-dimensional subspace spanned by the shortest $m$ sides of $Q_k$.  Then $\pi_k( Q_k)$ is contained in a ball of radius $\approx \delta_k$.  By a result of  Mitsis \cite{Mit02} (which we have mentioned already)   the Frostman dimension of $\pi_k(\mu)$ satisfies
\[
\frd \pi_k(\mu) \geq \min\left\{\frac{\fd \pi_k(\mu)}{2}, m\right\}.
\]
Furthermore, by considering directions within the subspace we are projecting on to, it is clear that the Fourier dimension cannot decrease under projection, that is, 
\[
\fd \pi_k(\mu) \geq \fd \mu.
\]
Therefore, we may conclude that 
\[
(\pi_k \mu) \big(\pi_k( Q_k) \big) \lesssim  \delta_k^{\min\{s/2 , m\}}
\]
for any $s < \fd \mu$.  In particular, by inspecting  Mitsis' proof \cite{Mit02}, the implicit constants above are \emph{independent of $k$} (that is, independent of  the choice of projection, which is determined by the orientation of $Q_k$ and therefore by $k$). On the other hand, since $\mu$ is $(\gamma, \upsilon)$-flat, we also have
\[
\delta_k^{\upsilon \gamma} \lesssim \mu(Q_k) \leq (\pi_k \mu) \big(\pi_k( Q_k) \big)
\]
and so, since $\upsilon\gamma<m$, we are forced to have
\[
\upsilon\gamma\geq s/2
\]
and the result follows.
\end{proof}

It is  interesting to compare Example \ref{nowithsmallg} and Theorem \ref{fourier2}.  In Example \ref{nowithsmallg}, $\upsilon \gamma > 1+\eps t$ and this can be made arbitrarily close to 1.  But for Theorem \ref{fourier2} to apply with cuboids with the anisotropy present in Example \ref{nowithsmallg} we would need $\upsilon \gamma <1$.

One of the main limitations of Theorem \ref{fourier2} is that we are forced to assume some isotropy in the cuboid $Q_k$, that is, we are forced to have $m$ `shortest sides'.  This is needed to force the projection of $Q_k$ into a ball.  On the other hand, if we could control the measure of an anisotropic ball (e.g. an ellipsoid or cuboid) in terms of the Fourier dimension, then this problem would disappear.  Unfortunately, we cannot do this in a sharp enough way to get anything useful.  The best one can do directly is to cover the anisotropic ball with smaller isotropic balls but this is quite inefficient in general.  We leave the reader to formulate estimates coming from this approach. On the other hand, if the Fourier dimension is so big that   the projection is forced to have a continuous density, then the measure of anisotropic balls \emph{can} be controlled in a precise and useful way.  This leads to the following result.

\begin{thm} \label{fourier3}
Let $\mu$ be a finite compactly supported measure on $\rd$ and suppose there exists a sequence of cuboids $Q_k$ such that
\[
 \frac{\mu(Q_k)}{\delta_k^1 \delta_k^2 \cdots \delta_k^m} \to \infty
 \]
 where $\delta_k^1, \cdots , \delta_k^m$ are the $m$ shortest sides of $Q_k$ (which we do not assume to be comparable in length). Then $\fd \mu \leq 2m$.
\end{thm}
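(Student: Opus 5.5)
We argue by contradiction, following the projection strategy from the proof of Theorem \ref{fourier2}. Suppose $\fd \mu > 2m$ and fix $s$ with $2m < s < \fd \mu$, so that $|\widehat{\mu}(\xi)| \lesssim |\xi|^{-s/2}$ for all $\xi \in \rd$.

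For each $k$, let $V_k$ be the $m$-dimensional subspace spanned by the directions of the $m$ shortest sides of $Q_k$, and let $\pi_k : \rd \to V_k$ be the orthogonal projection. We identify $V_k$ with $\mathbb{R}^m$ via an isometry. The Fourier transform of the projected measure is the restriction of $\widehat{\mu}$ to $V_k$, that is, $\widehat{\pi_k\mu}(\eta) = \widehat{\mu}(\eta)$ for $\eta \in V_k$. Hence
\[
|\widehat{\pi_k\mu}(\eta)| \lesssim \min\{\mu(\rd), |\eta|^{-s/2}\},
\]
with an implicit constant independent of $k$. Since $s/2 > m$, this bound is integrable on $\mathbb{R}^m$. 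Therefore $\widehat{\pi_k\mu} \in L^1(\mathbb{R}^m)$ with
\[
\|\widehat{\pi_k\mu}\|_{L^1} \leq C,
\]
where $C$ is independent of $k$ because the decay bound is uniform over directions. By Fourier inversion, $\pi_k\mu$ is absolutely continuous with a continuous density $h_k$ satisfying $\|h_k\|_\infty \leq \|\widehat{\pi_k\mu}\|_{L^1} \leq C$.

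Next, $\pi_k(Q_k)$ is itself an $m$-dimensional box in $V_k$ with sides $\delta_k^1, \dots, \delta_k^m$. This holds because $Q_k = g_k(I_1^k\times\cdots\times I_d^k)+z_k$, and projecting onto coordinate directions of the box just drops the other sides. Therefore
\[
\mu(Q_k) \leq \pi_k\mu(\pi_k(Q_k)) = \int_{\pi_k(Q_k)} h_k \leq C\,\delta_k^1\cdots\delta_k^m.
\]
This contradicts the hypothesis that $\mu(Q_k)/(\delta_k^1\cdots\delta_k^m) \to \infty$. Hence $\fd\mu \leq 2m$.

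The main point requiring care is the uniformity of the $L^\infty$ bound on the densities in $k$, since the subspaces $V_k$ vary with $k$. This is handled by the fact that the Fourier decay bound $|\widehat{\mu}(\xi)|\lesssim|\xi|^{-s/2}$ is global over all of $\rd$, so its restriction to any $m$-dimensional subspace obeys the same integrable majorant with the same constant. No comparability of the side lengths $\delta_k^j$ is needed, because the density bound controls the mass of arbitrary anisotropic boxes in $V_k$.
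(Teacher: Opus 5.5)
Your proof is correct and follows essentially the same route as the paper. You project onto the span of the $m$ shortest sides, use $\fd \mu > 2m$ to get a bounded continuous density for $\pi_k\mu$, and compare $\mu(Q_k) \leq \pi_k\mu(\pi_k(Q_k)) \lesssim \delta_k^1\cdots\delta_k^m$. Your explicit bound $\|h_k\|_\infty \leq \|\widehat{\pi_k\mu}\|_{L^1} \leq C$, with $C$ independent of $k$ because the decay estimate on $\widehat{\mu}$ holds uniformly over all frequencies, supplies the uniformity in $k$ that the paper's proof uses but leaves implicit.
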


\begin{proof}
Let $\pi_k$ be projection of $\rd$ onto the $m$-dimension subspace spanned by  the $m$ shortest sides of $Q_k$. Suppose  $\fd \mu>2m$.  Then $\fd (\pi_k \mu) > 2m$ and so   the measure $\pi_k \mu$ is absolutely continuous with a continuous and  bounded density, see \cite[Theorem 3.4]{mattilaFourier}.  Therefore,
\[
(\pi_k \mu) \big(\pi_k( Q_k) \big) \lesssim  \mathcal{L}^m(\pi_k(Q_k)) =  \delta_k^1 \delta_k^2 \cdots \delta_k^m.
\]
 But we also have
\[
 \mu(Q_k) \leq (\pi_k \mu) \big(\pi_k( Q_k) \big)
\]
which  contradicts the assumption in the theorem.  Therefore $\fd \mu \leq 2m$ as required.
\end{proof}

It is clear from the formula in  Corollary \ref{fourier} that there is only non-trivial information when $\gamma<1$ but it is less immediately obvious why $\gamma<1$ is needed in  Theorems \ref{fourier2-} and \ref{fourier2}.  However, in (the more general) Theorem \ref{fourier2} the partial isotropy forces $\upsilon \geq m$ and since we require $\upsilon \gamma < m$ we must have $\gamma<1$.  In general we cannot expect any useful bounds to hold if $\gamma \geq 1$.  This can be seen by the Examples \ref{nosf} and \ref{nog1} above.

We conclude this section with another situation where we can obtain the desired upper bound.  The drawback this time is that we require an awkward integrability assumption \eqref{decaycondy} which   says that the measure decays away from the cubes witnessing the flatness. The aim was to   give a non-Fourier analytic condition.

\begin{thm} \label{fourier4}
Let $\mu$ be a $(\gamma, \upsilon)$-flat measure in  $\rd$ with $\gamma<1$ and let $Q_k$ be a  sequence of witnessing cuboids.  Assume the following uniform decay condition: 
\begin{equation} \label{decaycondy}
\sup_k   \int_{x \in    Q_k^c}  \prod_{j=1}^d  |x^j-z^j_k|^{-1}   \, d \mu(x) <\infty
 \end{equation}
 where $z_k=(z_k^1, \dots, z_k^d)$ is the centre of $Q_k$.  Then $\fd \mu \leq 2\upsilon \gamma$.
\end{thm}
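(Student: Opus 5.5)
The plan is to make the heuristic preceding Example \ref{nosf} rigorous. Instead of looking for a single good frequency in $A_k^*$, I will \emph{average} $\widehat{\mu}$ over a large box of frequencies inside the dual cuboid $Q_k^*$. Averaging $\widehat{\mu|_{Q_k^c}}$ over such a box produces a product of one-dimensional oscillatory averages. The integrability hypothesis \eqref{decaycondy} makes this product small, in fact of size $\lesssim \mathcal{L}^d(Q_k)$. Meanwhile $\widehat{\mu|_{Q_k}}$ stays essentially equal to $\mu(Q_k)$ times a unimodular phase on the box. The assumption $\gamma<1$ guarantees that $\mathcal{L}^d(Q_k)\ll \mu(Q_k)$, so the contribution from $Q_k^c$ cannot cancel the main term.

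Throughout I work in the coordinates adapted to $Q_k$: after applying $g_k^{-1}$, $Q_k=I^k_1\times\cdots\times I^k_d+z_k$, and \eqref{decaycondy} is read in these rotated coordinates, which is the natural interpretation. Let $Q_k^*$ be as in the proof of Theorem \ref{knapp}, with sides $\ell_j=|(I^k_j)^*|\approx |I_j^k|^{-1}$. Suppose the shortest side is $I_1^k$, of length $\delta_k$. I define the frequency box
\[
B_k=[\ell_1/4,\ell_1/2]\times (I^k_2)^*\times\cdots\times(I^k_d)^*\subseteq Q_k^*.
\]
Every $\xi\in B_k$ then has $\delta_k^{-1}\lesssim|\xi|\lesssim\operatorname{diam}Q_k^*\lesssim\delta_k^{-1}$, and $\mathcal{L}^d(B_k)\approx\mathcal{L}^d(Q_k)^{-1}$. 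I will consider the normalised average
\[
\Lambda_k=\frac{1}{\mathcal{L}^d(B_k)}\int_{B_k}e^{2\pi i\xi\cdot z_k}\widehat{\mu}(\xi)\,d\xi ,
\]
and apply Fubini separately to the pieces $\mu|_{Q_k}$ and $\mu|_{Q_k^c}$.

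For the $Q_k$ piece, the estimate $|\xi\cdot(x-z_k)|\le 1/100$ for $x\in Q_k$, $\xi\in Q_k^*$ gives $\operatorname{Re}\bigl(e^{-2\pi i\xi\cdot(x-z_k)}\bigr)\ge\cos(\pi/50)$. Hence the real part of the contribution of $\mu|_{Q_k}$ to $\Lambda_k$ is at least $\tfrac{9}{10}\mu(Q_k)$.

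For the $Q_k^c$ piece, the average of $e^{-2\pi i\xi\cdot(x-z_k)}$ over $B_k$ factorises into one-dimensional averages over intervals of length $\approx\ell_j$. The elementary bound
\[
\Bigl|\frac1\ell\int_a^{a+\ell}e^{-2\pi i tu}\,dt\Bigr|\le\frac{1}{\pi\ell|u|}
\]
then bounds the product by $\lesssim\prod_j \ell_j^{-1}|x^j-z_k^j|^{-1}\approx \mathcal{L}^d(Q_k)\prod_j|x^j-z_k^j|^{-1}$. Integrating over $Q_k^c$ and using \eqref{decaycondy}, the contribution of $\mu|_{Q_k^c}$ is at most $C\mathcal{L}^d(Q_k)$, with $C$ independent of $k$.

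Combining the two pieces gives
\[
|\Lambda_k|\ge \tfrac{9}{10}\mu(Q_k)-C\mathcal{L}^d(Q_k)\ge \tfrac{9}{10}\mu(Q_k)-C'\mu(Q_k)^{1/\gamma},
\]
using $\mu(Q_k)\gtrsim\mathcal{L}^d(Q_k)^\gamma$. Because $\gamma<1$ and $\mu(Q_k)\to0$, the subtracted term is $o(\mu(Q_k))$, so $|\Lambda_k|\gtrsim\mu(Q_k)$ for large $k$. Since $\Lambda_k$ is an average of $e^{2\pi i\xi\cdot z_k}\widehat\mu(\xi)$, there exists $\xi_k\in B_k$ with
\[
|\widehat\mu(\xi_k)|\gtrsim\mu(Q_k)\gtrsim\mathcal{L}^d(Q_k)^\gamma\gtrsim\delta_k^{\upsilon\gamma}\approx|\xi_k|^{-\upsilon\gamma},
\]
and $|\xi_k|\to\infty$. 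This is incompatible with $|\widehat\mu(\xi)|\lesssim|\xi|^{-s/2}$ for any $s>2\upsilon\gamma$, which gives $\fd\mu\le 2\upsilon\gamma$.

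I expect the main obstacle to be the $Q_k^c$ estimate, in particular making the scales match. The box must be large, with volume comparable to $\mathcal{L}^d(Q_k)^{-1}$, so that the oscillatory averages gain the full factor $\mathcal{L}^d(Q_k)$. At the same time it must remain inside $Q_k^*$, so that the main term keeps its phase coherent, and it must avoid small frequencies, so that $|\xi|\approx\delta_k^{-1}$. Cutting $Q_k^*$ only in the shortest direction achieves all three at once. Two routine points remain: verifying that the one-dimensional sinc bound is uniform in the interval offsets, and checking the rotation convention in \eqref{decaycondy}.
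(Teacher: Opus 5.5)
Your proof is correct and is essentially the paper's argument: you split $\mu=\mu|_{Q_k}+\mu|_{Q_k^c}$, average over a large part of $Q_k^*$ away from the origin, use Fubini, the oscillatory bound and \eqref{decaycondy} to make the $Q_k^c$ contribution $O(\mathcal{L}^d(Q_k))$, and use $\gamma<1$ to beat it. The paper argues by contradiction over $A_k^*=Q_k^*\setminus B(0,\delta_k^{-1}/1000)$, whereas your direct average over the product box $B_k$ in $Q_k$-adapted coordinates makes the factorised bound literally valid.

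One small repair is needed: $\mu(Q_k)\to 0$ need not hold (e.g.\ if $\mu$ has atoms). Instead write $\mathcal{L}^d(Q_k)=\mathcal{L}^d(Q_k)^{1-\gamma}\mathcal{L}^d(Q_k)^{\gamma}\lesssim \mathcal{L}^d(Q_k)^{1-\gamma}\mu(Q_k)=o(\mu(Q_k))$, which also covers $\gamma=0$.
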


\begin{proof} 
Let
\[
\delta_k = \min_j |I_j^k|
\]
be the length of the shortest side of $Q_k$ and recall   that $\delta_k \to 0$ as $k \to \infty$. We also have the estimates 
\[
\mathcal{L}^d(Q_k) \gtrsim \delta_k^\upsilon
\]
and
\[
\mu(Q_k) \gtrsim \mathcal{L}^d(Q_k)^\gamma
\]
by assumption.   Write $Q_k^*$ for the dual cuboid as in the proof of Theorem \ref{knapp}.  Consider the decomposition
\[
\mu = \mu|_{Q_k^c} + \mu|_{Q_k}  
\]
and recall that for all $x \in Q_k$ and $\xi \in Q_k^*$
\[
|\xi \cdot (x-z_k)| \leq 1/100.
\]
Therefore,
\begin{align*}
e^{2\pi i \xi\cdot z_k} \widehat{\mu|_{Q_k} }(\xi)   = \int_{Q_k} e^{-2\pi i \xi\cdot (x-z_k)} \,d\mu(x) \in B\left( \mu(Q_k),  \mu(Q_k)/10\right) \subseteq \mathbb{C}
\end{align*}
We are searching for $\xi$ with $|\xi| \approx \delta_k^{-1}$ for which
\[
\left| \widehat{\mu  } (\xi)\right| \gtrsim \mu(Q_k)
\]
and this is guaranteed if we can find 
\[
\xi \in A_k^* : = Q_k^* \setminus B(0,\delta_k^{-1}/1000)
\]
 satisfying 
\[
\left| \widehat{\mu }(\xi) \right|  \gtrsim  \mu(Q_k).
\]
Therefore, in order to reach a contradiction, suppose for all $\xi \in A_k^*$, 
\[
\left| \widehat{\mu}(\xi) \right|  \leq c \mu(Q_k)
\]
for some fixed $c$ chosen sufficiently small.  For this to be true, we require that for all $\xi \in A_k^*$,
\[
 e^{2\pi i \xi\cdot z_k}\widehat{\mu|_{Q_k^c} }(\xi) \in B\left(-\mu(Q_k), \mu(Q_k)/5\right)
\]
Therefore, averaging over  $\xi \in A_k^*$,
\begin{equation} \label{contra}
\mu(Q_k) \approx  \frac{1}{\mathcal{L}^d(A_k^*)} \left| \int_{\xi \in A_k^*}   e^{2\pi i \xi\cdot z_k}\widehat{\mu|_{Q_k^c} }(\xi)  \, d \xi \right|  \approx \mathcal{L}^d(Q_k)\left| \int_{\xi \in A_k^*}  e^{2\pi i \xi\cdot z_k} \widehat{\mu|_{Q_k^c} }(\xi)  \, d \xi\right| .
\end{equation}
Then
\begin{align*}
\mathcal{L}^d(Q_k)^{\gamma-1} &\lesssim  \mu(Q_k) \mathcal{L}^d(Q_k)^{-1} \qquad \text{(by assumption)}  \\ 
&\approx  \left| \int_{\xi \in A_k^*} e^{2\pi i \xi\cdot z_k} \int_{x    \in Q_k^c}  e^{-2 \pi i x \cdot \xi} \, d \mu(x) \, d \xi\right| \qquad \text{(by \eqref{contra})}  \\
&=  \left| \ \int_{x    \in Q_k^c} \int_{\xi \in A_k^*} e^{2\pi i \xi\cdot z_k} e^{-2 \pi i x \cdot \xi} \, d \xi \, d \mu(x)\right| \qquad \text{(by Fubini's theorem)}  \\
&\leq     \int_{x \in    Q_k^c}\left| \int_{\xi \in A_k^*} e^{-2 \pi i (x-z_k) \cdot \xi} \, d \xi \right|\, d \mu(x) \\
&\lesssim    \int_{x \in    Q_k^c}  \prod_{j=1}^d  |x^j-z^j_k|^{-1}   \, d \mu(x) \\
&\lesssim 1 
\end{align*}
by \eqref{decaycondy}. This is a contradiction since we assume $\gamma<1$.   Therefore, there exists $\xi$ with $|\xi| \approx \delta_k^{-1}$ for which
\[
\left| \widehat{\mu  } (\xi)\right| \gtrsim \mu(Q_k) \gtrsim \mathcal{L}^d(Q_k)^\gamma  \gtrsim \delta_k^{\upsilon \gamma}\approx |\xi|^{-\frac{2\upsilon \gamma}{2}}.
\]
It follows that 
\[
\fd \mu \leq 2\upsilon \gamma
\]
as required.
\end{proof}

\subsection{$L^2$-flattening}

$L^2$-flattening is an important property which a measure can satisfy which has many useful  applications, see \cite{algomkhalil, algomorponen,bks, khalil}.  Roughly speaking, the idea is  that convolution with an $L^2$-flattening measure quantitatively improves the  $L^2$ dimension, see \cite{rossi}. Importantly, $L^2$-flattening  does not require (but is implied by) positive Fourier dimension ($L^\infty$ decay).

We say a compactly supported Borel measure $\mu$ on $\rd$ is \emph{$L^2$-flattening} if for all $\eps>0$, there exists $p>1$ such that for all $R>0$
\[
\| \widehat{\mu} \|_{L^p(B(0,R))}^p = O_\eps(R^\eps).
\]
The following observation   shows that $L^2$-flattening is completely characterised by the Fourier spectrum.  Later we will use this characterisation to connect $L^2$-flattening to quantitative flatness.   Here we write $A(\theta) \sim B(\theta)$ as $\theta \to 0$  if $A(\theta)/B(\theta) \to 1$ as $\theta \to 0$. 

\begin{thm} \label{flatteningspectrum}
A compactly supported Borel measure $\mu$ on $\rd$ is \emph{$L^2$-flattening} if and only if either $\fd \mu >0$ or if 
\begin{equation} \label{der}
 \fs \mu \sim d \theta 
\end{equation}
as $\theta \to 0$.
\end{thm}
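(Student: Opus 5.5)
The plan is to make one identification and then argue both directions with dyadic summation. Set $p=2/\theta$. Then
\[
\J_{s,\theta}(\mu)^{1/\theta}=\int_{\rd}|\widehat{\mu}(\xi)|^{p}\,|\xi|^{\frac{s}{\theta}-d}\,d\xi .
\]
Writing $s=\theta(d-\eta)$, the weight becomes $|\xi|^{-\eta}$. So the statement $\fs\mu\geq \theta(d-\eta)$ says, up to endpoint issues, that $\int |\widehat{\mu}|^p|\xi|^{-\eta}\,d\xi<\infty$. The $L^2$-flattening condition $\|\widehat{\mu}\|^p_{L^p(B(0,R))}=O_\eps(R^\eps)$ is the unweighted, localised version of the same integral. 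Throughout I will use that $|\widehat{\mu}|\leq \mu(\rd)$. This makes the region $|\xi|\leq 1$ harmless, and it means that if the flattening estimate holds for some $p$, it also holds for every larger $p$, since $|\widehat{\mu}|^{p'}\lesssim|\widehat{\mu}|^p$ for $p'>p$. Consequently we may always take $p$ as large as we like, i.e.\ $\theta=2/p$ as small as we like. This also avoids the issue that $p$ close to $1$ would give $\theta>1$.

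\emph{Sufficiency.} Suppose \eqref{der} holds. Fix $\eps>0$ and choose $\theta>0$ small enough that $\fs\mu>\theta(d-\eps/2)$. Then $\J_{s,\theta}(\mu)<\infty$ for $s=\theta(d-\eps)$, that is, $\int|\widehat{\mu}|^p|\xi|^{-\eps}\,d\xi<\infty$ with $p=2/\theta$. Since $|\xi|^{-\eps}\geq R^{-\eps}$ on $B(0,R)\setminus B(0,1)$, this gives $\int_{B(0,R)}|\widehat{\mu}|^p\lesssim 1+R^\eps$, which is $L^2$-flattening.

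Now suppose instead $\fd\mu>0$. Monotonicity in $\theta$ gives $\fs\mu\geq\fd\mu>0$. Hence for all sufficiently small $\theta$ we have $\fs\mu> d\theta>\theta(d-\eps/2)$, and the same argument applies.

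\emph{Necessity.} Suppose $\mu$ is $L^2$-flattening. Fix $\eps>0$ and take the corresponding $p$, enlarged if necessary. Decompose into dyadic shells $2^{k-1}\leq|\xi|<2^k$. The hypothesis bounds each shell integral by $\lesssim 2^{k\eps}$, so
\[
\int|\widehat{\mu}|^p|\xi|^{-2\eps}\,d\xi\lesssim 1+\sum_k 2^{-2\eps k}2^{k\eps}<\infty .
\]
Therefore $\fs\mu\geq \theta(d-2\eps)$ for $\theta=2/p$. Since the flattening estimate persists for every larger $p$, the same bound holds for all sufficiently small $\theta$, giving $\liminf_{\theta\to0}\fs\mu/\theta\geq d$.

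If $\fd\mu>0$ we are done. Otherwise $\fd\mu=0$, and the general bound \eqref{bound} gives $\fs\mu\leq \fd\mu+d\theta=d\theta$. Combined with the lower bound, this yields $\fs\mu\sim d\theta$.

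The only genuinely delicate points are the endpoint bookkeeping (strict versus non-strict inequalities in the definition of $\fs$) and the monotonicity in $p$ used above. I expect the main obstacle to be correctly invoking \eqref{bound} in the case $\fd\mu=0$; it is the only place where an upper bound on $\fs\mu$ is needed.
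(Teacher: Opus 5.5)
Your proof is correct. It has the same skeleton as the paper's: in both, the upper bound $\fs\mu\le d\theta$ when $\fd\mu=0$ comes from \eqref{bound}, and everything else is a translation between finiteness of $\J_{s,\theta}(\mu)$ and growth of $\|\widehat\mu\|^{2/\theta}_{L^{2/\theta}(B(0,R))}$.

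The difference is in how that translation is obtained. The paper imports it from \cite[Theorems 4.1 and 4.2]{Fra24} in the form $\fs\mu=\sup\{\alpha\in[0,d\theta]:\|\widehat\mu\|^{2/\theta}_{L^{2/\theta}(B(0,R))}\lesssim R^{d-\alpha/\theta}\}$. Because that characterisation is stated for $\alpha\in[0,d\theta]$, the paper uses it only after \eqref{bound} has placed $\fs\mu$ in that range. This is why the paper treats $\fd\mu>0$ separately, via the pointwise bound $|\widehat\mu(\xi)|\lesssim|\xi|^{-s/2}$.

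You derive the two directions you need directly from the definition of $\J_{s,\theta}$. For sufficiency you use $|\xi|^{-\eps}\geq R^{-\eps}$ on $B(0,R)\setminus B(0,1)$, and for necessity you use dyadic summation. This lets you handle the case $\fd\mu>0$ by the same mechanism, through $\fs\mu\geq\fd\mu$.

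You also make explicit, via $|\widehat\mu|^{p'}\lesssim|\widehat\mu|^{p}$ for $p'>p$, that $\fs\mu\geq\theta(d-2\eps)$ holds for every sufficiently small $\theta$. The flattening hypothesis alone only produces this at one $\theta$ for each $\eps$. The paper obtains only that single $\theta$ and implicitly relies on concavity of the spectrum (so that $\fs\mu/\theta$ is non-increasing) to pass to $\theta\to0$.

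Your route is self-contained and uniform across the two cases, and it avoids the slightly awkward passage from $L^p$ to $L^{2p}$ in the paper's necessity step. The paper's route is shorter given the citation.
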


\begin{proof}
First assume that $\fd \mu >0$ in which case, for some $s>0$,  
\[
\big| \widehat{\mu}(\xi) \big| \lesssim |\xi|^{-s/2}.
\]
But then 
 \[
\| \widehat{\mu} \|_{L^p(B(0,R))}^p \lesssim  \int_{|\xi \leq R} (1+|\xi|)^{-ps/2} \, d \xi \leq  \int_{\xi\in \rd} (1+|\xi|)^{-ps/2} \, d\xi < \infty 
\]
for $p>2d/s$ and so $\mu$ is $L^2$-flattening.  Now suppose  $\fd \mu = 0$ in which case $\fs \mu \leq d \theta$  for all $\theta \in [0,1]$ by \eqref{bound} and we can appeal to \cite[Theorems 4.1 and 4.2]{Fra24} to get the following alternative characterisation of the Fourier spectrum.  Indeed, 
\[
\fs \mu = \sup\left\{ \alpha \in [0, d \theta ] : \| \widehat{\mu} \|_{L^{2/\theta}(B(0,R))}^{2 /\theta} \lesssim R^{d-\alpha/\theta} \right\}.
\] 
Therefore, recalling \eqref{bound} and that the Fourier spectrum of a measure is concave,  \eqref{der} implies that  for all $\eps>0$ there exists $\theta>0$ such that $\fs \mu > (d-\eps)\theta$.  Moreover, $\fs \mu > (d-\eps)\theta$ implies that
\[
\| \widehat{\mu} \|_{L^{2/\theta}(B(0,R))}^{2 /\theta} \lesssim R^{d-(d-\eps)\theta/\theta} = R^\eps
\]
and so $\mu$ is $L^2$-flattening, choosing $p(\eps) = 2/\theta>1$.

In the other direction, suppose $\mu$ is $L^2$-flattening.  Then, for all $\eps>0$, there exists $p>1$ such that for all $R>0$
\[
\| \widehat{\mu} \|_{L^p(B(0,R))}^p = O_\eps(R^\eps).
\]
But then
\[
\| \widehat{\mu} \|_{L^{2p}(B(0,R))}^p  \leq \| \widehat{\mu} \|_{L^p(B(0,R))}^p = O_\eps(R^\eps)
\]
and
\[
\| \widehat{\mu} \|_{L^{2p}(B(0,R))}^{2p}  \lesssim R^{2\eps}.
\]
Setting $\theta=1/p$, which is now safely in $[0,1]$, we get
\[
\fs \mu \geq (d-2\eps)\theta. 
\]
It follows that $\fs \mu \sim d \theta$ or that $\fd \mu >0$.  This completes the proof.
\end{proof}

This characterisation perhaps suggests a more thorough analysis of similar flattening properties.  For example, do  the higher order terms in $\fs \mu$ as $\theta \to 0$   play a role?  Moreover, when $\mu$ is not $L^2$-flattening, concavity of the Fourier spectrum implies that 
\[
\fs \mu \geq \theta \sd \mu
\]
holds for all $\theta$.  Therefore, one might ask  whether  an intermediate asymptotic, such as
\[
\fs \mu \sim u \theta 
\] 
as $\theta \to 0$ for some $u \in (\sd \mu , d)$, is useful or whether  we always need  $u=d$. Further, we wonder if there are interesting classes of measures which satisfy such  an intermediate asymptotic.

Since $L^2$-flattening occurs whenever $\fd \mu >0$, we can only inhibit $L^2$-flattening in the very special case when we can show $\fd \mu =0$.  But even in this case we have to do more and show that the Fourier spectrum is not asymptotically as large as possible (that is, does not satisfy $\fs \mu \sim d\theta$).  Perhaps surprisingly, we achieve the latter whenever we achieve the former. We also achieve  this in a uniform and quantitative sense by proving that the Fourier spectrum is bounded above by $(d-1/2)\, \theta$.

\begin{thm} \label{flattening}
Let $\mu$ be a measure on $\rd$ which is $\gamma$-flat for all $\gamma>0$. Then 
\[
\fs \mu \leq  (d-1/2) \, \theta 
\]
for all $0 \leq \theta \leq 1$.  In particular, the Frostman and Sobolev dimensions of $\mu$ are at most $d-1/2$ and  $\mu$ is not $L^2$-flattening.
\end{thm}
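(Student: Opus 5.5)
The plan is to pair $\mu$ against a smooth positive bump adapted to a witnessing cuboid, move to the Fourier side, and bound the result by H\"older with a local $L^{2/\theta}$ bound on $\widehat{\mu}$. Fix $\theta\in(0,1]$ (the case $\theta=0$ follows by continuity, or by the same argument with $p=\infty$) and $s<\fs\mu$. By the characterisation of the Fourier spectrum from \cite[Theorems 4.1 and 4.2]{Fra24} used in the proof of Theorem \ref{flatteningspectrum}, for every $\eps>0$ we have
\[
\| \widehat{\mu} \|_{L^{2/\theta}(B(0,R))}^{2/\theta} \lesssim R^{d-s/\theta+\eps}.
\]
Since $\mu$ is compactly supported, we may truncate the long sides of any witnessing cuboid to length $\lesssim \operatorname{diam}(\operatorname{supp}\mu)$. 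This keeps $\mu(Q_k)$ and the centre fixed, does not increase $\mathcal{L}^d(Q_k)$, and does not increase the shortest side. So we may assume all $Q_k$ have bounded diameter. With $\delta_k$ the shortest side, this gives
\[
\delta_k^d \le \mathcal{L}^d(Q_k)\lesssim \delta_k .
\]

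\emph{Test function.} Let $\psi\ge 0$ be a fixed Schwartz function with $\psi\ge 1$ on $[-1,1]^d$ and $\widehat\psi$ supported in a fixed ball. One can take $\psi=c|\widehat g|^2$ for a suitable bump $g$, so that $\widehat\psi=c\, g*\tilde g$ has compact support. Set $\psi_k=\psi\circ A_k^{-1}$, where $A_k$ is the affine map taking $[-1,1]^d$ onto $Q_k$. Then $\psi_k\ge 1_{Q_k}$, and its Fourier transform $h_k$ is supported in a dilate $CQ_k^*$ of the dual cuboid, with $|h_k|\lesssim \mathcal{L}^d(Q_k)$ pointwise. Since $\operatorname{diam} Q_k^*\approx\delta_k^{-1}$, the support lies in $B(0,R_k)$ with $R_k\approx \delta_k^{-1}$. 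By Parseval and H\"older with $p=2/\theta$,
\[
\mu(Q_k)\le \int\psi_k\,d\mu = \Big|\int h_k(\xi)\,\overline{\widehat{\mu}(\xi)}\,d\xi\Big| \le \|h_k\|_{p'}\,\|\widehat\mu\|_{L^{p}(B(0,R_k))}.
\]
Here
\[
\|h_k\|_{p'}\lesssim \mathcal{L}^d(Q_k)\,\mathcal{L}^d(Q_k^*)^{1/p'}\approx \mathcal{L}^d(Q_k)^{1/p}=\mathcal{L}^d(Q_k)^{\theta/2},
\]
and
\[
\|\widehat\mu\|_{L^p(B(0,R_k))}\lesssim \delta_k^{-(d\theta-s+\eps\theta)/2}.
\]

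\emph{Comparison.} Using $\mathcal{L}^d(Q_k)\lesssim\delta_k$ on the upper side and $\mu(Q_k)\gtrsim \mathcal{L}^d(Q_k)^\gamma\ge\delta_k^{d\gamma}$ on the lower side gives
\[
\delta_k^{d\gamma}\lesssim \delta_k^{\theta/2-(d\theta-s)/2-\eps\theta/2}.
\]
Letting $k\to\infty$ (so $\delta_k\to 0$) and then $\gamma,\eps\to 0$ yields $s\le (d-1)\theta$. This is in fact stronger than the claimed $(d-1/2)\theta$. I expect the main obstacle is exactly this step: making sure the exponent bookkeeping is right, in particular the bound $\mathcal{L}^d(Q_k)\lesssim\delta_k$ after truncation and the $L^{p'}$ norm of $h_k$. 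If that bookkeeping only survives in a weaker form, for instance by losing a factor $\mathcal{L}^d(Q_k)^{1/2}$ when using a cruder $L^2$ bound on part of the integral, it should still deliver the stated $(d-1/2)\theta$.

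\emph{Consequences.} Taking $\theta=1$ gives $\sd\mu\le d-1/2$. Since Frostman dimension is at most Sobolev dimension (a measure with $\mu(B(x,r))\lesssim r^t$ has finite $t'$-energy for all $t'<t$), the Frostman dimension is also at most $d-1/2$. Taking $\theta=0$ gives $\fd\mu=0$. Since $\fs\mu\le(d-1/2)\theta$, the asymptotic $\fs\mu\sim d\theta$ fails, so Theorem \ref{flatteningspectrum} shows $\mu$ is not $L^2$-flattening.
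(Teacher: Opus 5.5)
Your argument is correct. It takes a genuinely different route from the paper's, and it proves a stronger conclusion.

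\textbf{How the paper argues.} The paper works by contradiction through restriction theory. It convolves $\mu$ with a Salem measure $\nu$ of tiny dimension $\eps$, uses $\fs(\mu\ast\nu)\geq\fs\mu+\fd\nu$ from \cite[Theorem 6.1]{Fra24}, and checks that $\mu\ast\nu$ is still $(\gamma+\eps')$-flat. It then feeds this into Corollary \ref{fourierspectrum}, which is the Knapp example combined with the spectral Stein--Tomas estimate of \cite{CFdO24b+}. Evaluating the resulting inequality at $\theta=0$ and $\theta=2\eps$ is what produces the constant $d-1/2$.

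\textbf{How you argue, and what it buys.} You dominate $1_Q$ by a bump whose Fourier transform lives in the dual cuboid, then apply Parseval and H\"older. This gives directly the uncertainty-type inequality
\[
\mu(Q)\lesssim \mathcal{L}^d(Q)^{\theta/2}\,\delta^{(s-d\theta)/2}
\]
for every cuboid $Q$ with shortest side $\delta$ and every $0<s<\fs\mu$ with $s\leq d\theta$. For cubes this reduces to a Mitsis-type bound $\mu(Q)\lesssim\delta^{s/2}$.
\begin{itemize}
\item Your route needs no restriction theorem, no auxiliary Salem measures and no convolution inequality.
\item Your exponent bookkeeping is right, so you obtain $\fs\mu\leq(d-1)\theta$. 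This is sharp, since $(d-1)$-dimensional Lebesgue measure on a piece of hyperplane is $0$-flat with spectrum $(d-1)\theta$. It therefore answers the question the paper leaves open just after this theorem.
\item If you keep the factor $\mathcal{L}^d(Q_k)^{\theta/2-\gamma}$ instead of bounding $\mu(Q_k)$ below crudely by $\delta_k^{d\gamma}$, you get $\fs\mu\leq(d-1)\theta+2\gamma$ for a single fixed $\gamma<\theta/2$. At $\theta=1$ this is exactly the paper's subsequent bound $\sd\mu\leq d-1+2\gamma$.
\end{itemize}
The paper's route has the merit of exercising the restriction-based Corollary \ref{fourierspectrum}, but it loses the constant.

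\textbf{Three small repairs.}
\begin{itemize}
\item The local bound $\|\widehat\mu\|^{2/\theta}_{L^{2/\theta}(B(0,R))}\lesssim R^{d-s/\theta}$ is false when $s>d\theta$, because $|\widehat\mu|\approx\mu(\rd)$ near the origin. The characterisation you quote is stated only for $\alpha\in[0,d\theta]$, and in the paper only under $\fd\mu=0$. For $0<s\leq d\theta$, however, the bound follows in one line from the definition of $\J_{s,\theta}$, since $|\xi|^{s/\theta-d}\geq R^{s/\theta-d}$ on $B(0,R)$. To prove $\fs\mu\leq(d-1)\theta$ it suffices to take $s\in((d-1)\theta,\min\{\fs\mu,d\theta\})$.
\item For the same reason, the ``$p=\infty$'' version at $\theta=0$ gives nothing. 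Use continuity of the spectrum at $\theta=0$ for compactly supported $\mu$ instead.
\item Truncating $Q_k$ may move its centre, contrary to what you wrote. This is harmless.
\end{itemize}
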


\begin{proof}
The strategy here is to contradict    Corollary  \ref{fourierspectrum} by convolving a measure with arbitrarily strong flatness but large Fourier spectrum with a   Salem  measure of small dimension. Suppose there exists $\theta_0>0$ such that $\dim_\mathrm{F}^{\theta_0} \mu \geq \kappa \theta_0$ for some $\kappa  >  d-1/2$. Then, by concavity of the Fourier spectrum,  $\fs \mu \geq \kappa \theta $ for all $\theta \in [0,\theta_0]$.  Let $\gamma>0$,   $\eps \in (0,2\theta_0)$ and let $\nu$ be a compactly supported Borel probability measure  with $\fd \nu = \eps$ which is Salem.  Then, by \cite[Theorem 6.1]{Fra24},
\[
\fs (\mu \ast \nu) \geq \fs \mu + \fd \nu \geq \kappa \theta  + \eps
\]
for all  $\theta \in [0,\theta_0]$. By assumption, we may find a cuboid $Q$ such that
\[
\mu(Q) \gtrsim \mathcal{L}^d(Q)^\gamma
\]
and let  $\delta$ be the length of the shortest side of $Q$.  Since we chose  $\nu$ to have Hausdorff (and so Frostman) dimension $\eps$ we can find a $\delta$-ball $B$ such that $\nu(B) \gtrsim \delta^{\eps'}$ for all $\eps'>\eps$.  Then
\[
Q+B \subseteq Q'
\]
for a cuboid $Q'$ with sidelengths comparable to the sidelengths of $Q$ and oriented in the same way as $Q$.  Then
\[
(\mu \ast \nu)(Q') \geq \mu(Q) \nu(B) \gtrsim \mathcal{L}^d(Q)^\gamma \delta^\eps \geq \mathcal{L}^d(Q)^{\gamma+\eps'}
\]
and so $\mu \ast \nu$ is $(\gamma+\eps')$-flat. If $\alpha \in (0,d)$ is the Frostman dimension of $\mu$ then the Frostman dimension of $(\mu \ast \nu) $ is at least $\alpha$. We may assume $\eps$ and $\eps'$ are small enough to ensure $\gamma+\eps'<1$ and then, after letting $\eps' \to \eps$, Corollary \ref{fourierspectrum} gives
\[
\fs (\mu \ast \nu)   \leq \frac{2(\gamma+\eps)(d-\alpha) }{1-(\gamma+\eps)} + \theta \cdot \frac{\alpha - d (\gamma+\eps)}{1-(\gamma+\eps)}
\]
for all $0 \leq \theta \leq 2(\gamma+\eps)$.  Since this holds for all $\gamma>0$ we conclude that
\[
 \kappa \theta  + \eps \leq \fs (\mu \ast \nu)   \leq \frac{2\eps(d-\alpha) }{1-\eps} + \theta \cdot \frac{\alpha - d \eps}{1-\eps}
\]
for all $0 \leq \theta \leq 2\eps$.  First, setting $\theta=0$ we get
\[
\eps \leq   \frac{2\eps(d-\alpha) }{1-\eps}  
\]
and since this must hold for all $\eps>0$, we get
\[
\alpha \leq d-1/2.
\]
 Second, setting $\theta=2\eps$ we get
\[
 2\eps \kappa   + \eps \leq  \frac{2\eps(d-\alpha) }{1-\eps} + 2\eps \frac{\alpha - d \eps}{1-\eps}
\]
which, upon dividing through by $2\eps$ and rearranging, gives
\[
\kappa \leq d-1/2,
\]
which contradicts the choice of $\kappa$ and establishes the desired bound for the Fourier spectrum.   Finally, the fact that $\mu$ is not $L^2$-flattening now follows from Theorem \ref{flatteningspectrum}.
\end{proof}

The upper bound of $(d-1/2)$ for the Sobolev dimension in Theorem \ref{flattening} is intriguing and a first question is whether it is optimal.  In fact, it is not and we prove something more general below.  Note that bounding the Sobolev dimension above by $(d-1)$ does \emph{not} improve on the upper bound for the Fourier spectrum in Theorem \ref{flattening} apart from for $\theta$ close to 1 and this is not what is relevant in the characterisation of $L^2$-flattening. We do not know if $(d-1/2) \, \theta$ is the optimal upper bound for the Fourier spectrum near zero, noting that $0$-flat examples with Fourier spectrum equal to $(d-1)\, \theta$ are easily constructed, see below.

\begin{thm}
If $\mu$ is a $\gamma$-flat measure on $\rd$  for $\gamma \in [0,1]$, then
\[
\frd \mu \leq d-1+\gamma
\]
and, provided $\gamma<1/2$,
\[
\sd \mu \leq d-1+2 \gamma.
\]
In particular, if $\mu$ is $\gamma$-flat for all $\gamma>0$, then
\[
\frd \mu \leq \sd \mu \leq d-1
\]
and this bound is sharp. 
\end{thm}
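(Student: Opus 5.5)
Three things need proving: the Frostman bound, the Sobolev bound, and sharpness. In all of them we fix witnessing cuboids $Q_k$ with shortest side $\delta_k$ and longest side $\Delta_k$. Since $\mu$ is compactly supported we may assume $\Delta_k \lesssim 1$, by trimming any side that sticks out beyond the support. We have $\mathcal{L}^d(Q_k)\leq \delta_k \Delta_k^{d-1}\lesssim \delta_k$, and $\delta_k\to0$ because $\mathcal{L}^d(Q_k)\to 0$.

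\textbf{Frostman bound.} Cover $Q_k$ by $\lesssim \mathcal{L}^d(Q_k)/\delta_k^d$ balls of radius $\delta_k$. This count is valid because every side is at least $\delta_k$. If $\mu(B(x,r))\leq C r^s$ for all $x,r$, then
\[
\mathcal{L}^d(Q_k)^\gamma \lesssim \mu(Q_k)\lesssim \mathcal{L}^d(Q_k)\,\delta_k^{s-d}.
\]
Rearranging gives $\delta_k^{d-s}\lesssim \mathcal{L}^d(Q_k)^{1-\gamma}\lesssim \delta_k^{1-\gamma}$. Letting $\delta_k\to0$ forces $d-s\geq 1-\gamma$, that is, $s\leq d-1+\gamma$, and hence $\frd\mu\leq d-1+\gamma$.

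\textbf{Sobolev bound.} Here I would reuse the Knapp test from the proof of Theorem~\ref{knapp}. On the dual cuboid $Q_k^*$ we have $|\widehat{\mu|_{Q_k}}|\approx\mu(Q_k)$, but $\mu$ itself is not localised. So I would smooth first. Take a Schwartz bump $\phi_k$ adapted to $Q_k$: it is $\gtrsim 1$ on $Q_k$, and $\widehat{\phi_k}$ is concentrated on a dilate of $Q_k^*$ with $\|\widehat{\phi_k}\|_{L^1}\approx 1$. The cleanest route uses the energy form of the Sobolev dimension. For $s<d$,
\[
\int|\widehat\mu|^2|\xi|^{s-d}\,d\xi\approx \iint|x-y|^{-s}\,d\mu(x)\,d\mu(y),
\]
and restricting both integrals to $Q_k\times Q_k$ gives $I_s(\mu)\geq I_s(\mu|_{Q_k})$. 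Next bound $I_s(\mu|_{Q_k})$ from below by Cauchy--Schwarz against a test function: $\mu(Q_k)^2\leq I_s(\mu|_{Q_k})\,\cdot\,\sup$ of the appropriate dual quantity, or equivalently $\mu(Q_k)^2 \leq I_s(\mu|_{Q_k}) \cdot \|1_{Q_k}\|_{H^{-(d-s)/2}}^{2}$ after pairing with the indicator.

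The key computation is the $\dot H^{(s-d)/2}$-type norm of $1_{Q_k}$, namely $\int |\widehat{1_{Q_k}}|^2|\xi|^{d-s}\,d\xi$. This may be tricky, and it is the main obstacle. For the heuristic I would use a smooth bump $\phi_k$ in place of $1_{Q_k}$. Its Fourier transform has size $\mathcal{L}^d(Q_k)$ on $Q_k^*$, and $|\xi|\lesssim\delta_k^{-1}$ there, so the integral is $\lesssim \mathcal{L}^d(Q_k)^2\,\mathcal{L}^d(Q_k)^{-1}\delta_k^{s-d}=\mathcal{L}^d(Q_k)\,\delta_k^{s-d}$. Then
\[
\mathcal{L}^d(Q_k)^{2\gamma}\lesssim\mu(Q_k)^2\lesssim I_s(\mu)\,\mathcal{L}^d(Q_k)\,\delta_k^{s-d}.
\]
Using $\mathcal{L}^d(Q_k)\lesssim\delta_k$ and $1-2\gamma>0$, this gives $\delta_k^{d-s}\lesssim \delta_k^{1-2\gamma}$, hence $s\leq d-1+2\gamma$. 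The restriction $\gamma<1/2$ enters exactly here, to make the exponent $1-2\gamma$ positive. I would make the duality rigorous via Plancherel, $\int\phi_k\,d\mu=\int\widehat{\phi_k}\,\overline{\widehat\mu}$, followed by Cauchy--Schwarz with weights $|\xi|^{\pm(s-d)/2}$, with $\phi_k\geq 1_{Q_k}$ nonnegative and chosen so that its Fourier tails are harmless. The main obstacle is choosing a nonnegative $\phi_k$ with $\phi_k\gtrsim 1$ on $Q_k$ and controlling $\widehat{\phi_k}$ off $Q_k^*$, including the low-frequency region where $|\xi|^{d-s}$ is small.

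\textbf{Consequences and sharpness.} The inequality $\frd\leq\sd$ is standard, and letting $\gamma\to0$ gives the middle claim. For sharpness take $\mu=\mathcal{L}^{d-1}|_{[0,1]^{d-1}\times\{0\}}$. It is $0$-flat, witnessed by the slabs $[0,1]^{d-1}\times[-\delta,\delta]$, and it has $\frd\mu=\sd\mu=d-1$.
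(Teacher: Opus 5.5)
Your Frostman bound and your sharpness example are the same as the paper's: summing the Frostman bound over the $\approx\mathcal{L}^d(Q_k)/\delta_k^d$ cubes is the paper's pigeonholing estimate. For the Sobolev bound you take a genuinely different route, and it is correct.

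The step you flag as the main obstacle is not actually an obstacle. Write $Q_k=A_k([-\tfrac12,\tfrac12]^d)+z_k$ with $A_k=g_k\,\mathrm{diag}(|I_1^k|,\dots,|I_d^k|)$. Set $\phi_k=\psi(A_k^{-1}(\cdot-z_k))$ for any nonnegative Schwartz $\psi\geq 1$ on $[-\tfrac12,\tfrac12]^d$, for example $\psi(x)=e^{\pi d/4}e^{-\pi|x|^2}$. Substitute $\eta=A_k^T\xi$ and use $|A_k^{-T}\eta|\leq\delta_k^{-1}|\eta|$ together with $d-s\geq 0$. For $s<d$ this gives
\[
\int|\widehat{\phi_k}(\xi)|^2|\xi|^{d-s}\,d\xi=\mathcal{L}^d(Q_k)\int|\widehat{\psi}(\eta)|^2|A_k^{-T}\eta|^{d-s}\,d\eta\leq C_\psi\,\mathcal{L}^d(Q_k)\,\delta_k^{s-d}.
\]
So neither the low frequencies nor the tails need separate treatment. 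At low frequencies the weight is small, which only helps an upper bound. The tails are absorbed into $\int|\widehat\psi|^2|\eta|^{d-s}<\infty$.

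Then $\mu(Q_k)\leq\int\phi_k\,d\mu=\int\widehat{\phi_k}\,\overline{\widehat{\mu}}$ and Cauchy--Schwarz give $\mu(Q_k)^2\lesssim\mathcal{J}_{s,1}(\mu)\,\mathcal{L}^d(Q_k)\,\delta_k^{s-d}$ directly. There is no need to pass through $\mu|_{Q_k}$ or the energy form. One minor slip: the test function is measured in $\dot H^{(d-s)/2}$, the dual of the $\dot H^{(s-d)/2}$ norm of $\mu$. Your integral is right, but the labels $H^{-(d-s)/2}$ and $\dot H^{(s-d)/2}$ are swapped.

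The paper instead stays in physical space. It reuses the partition of $Q_k$ into $\delta_k$-cubes from the Frostman part and applies Cauchy--Schwarz to the masses to get $\sum_Q\mu(Q)^2\gtrsim\mathcal{L}^d(Q_k)^{2\gamma-1}\delta_k^d\geq\delta_k^{d-1+2\gamma}$. It then invokes the cube-sum characterisation of the correlation dimension and its agreement with $\sd\mu$ below $d$ (Peres--Solomyak).

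The two arguments are essentially dual. Both use $\mathcal{L}^d(Q_k)\lesssim\delta_k$ and $\gamma<1/2$ at exactly the same point. The paper's version is more elementary and handles both bounds with one decomposition. However, it leans on the external energy/Sobolev equivalence. Strictly, it also needs to pass from cubes tilted with $Q_k$ to grid cubes, or to use $I_s(\mu)\gtrsim\delta_k^{-s}\sum_Q\mu(Q)^2$ directly. Your version works from the Fourier definition of $\sd\mu$ alone. It is an $L^2$ analogue of the dual-cuboid Knapp test in Theorem~\ref{knapp}, so it fits the framework of Section 2 more naturally.
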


\begin{proof}
Let $Q_k$ be a sequence of cuboids witnessing the $\gamma$-flatness of $\mu$ and write
\[
0 < \delta_k^1 \leq \delta_k^2 \leq \cdots \leq \delta_k^d \leq 1
\]
for the sidelengths of $Q_k$.  Note that, by assumption,
\[
\mu(Q_k) \gtrsim (\delta_k^1   \cdots   \delta_k^d)^\gamma.
\]
Decompose $Q_k$ into 
\[
\approx \frac{\delta_k^1   \cdots   \delta_k^d}{(\delta_k^1)^d}
\]
many $\delta_k^1$-cubes.  By pigeonholing, there must exist such a $\delta_k^1$-cube of $\mu$ mass at least 
\[
\gtrsim \mu(Q_k) \frac{(\delta_k^1)^d}{\delta_k^1   \cdots   \delta_k^d} \gtrsim   (\delta_k^1   \cdots   \delta_k^d)^{\gamma-1}  (\delta_k^1)^d \geq (\delta_k^1)^{d-1+\gamma} 
\]
and this proves $\frd \mu \leq d-1+\gamma$.  For the Sobolev dimension bound, we have to work slightly harder.  Summing over all of the  $\delta_k^1$-cubes $Q$ formed from $Q_k$ and recognising that the second moment is minimised by the uniform distribution we get
\[
\sum_Q \mu(Q)^2 \gtrsim \frac{\delta_k^1   \cdots   \delta_k^d}{(\delta_k^1)^d} \left( (\delta_k^1   \cdots   \delta_k^d)^{\gamma-1}  (\delta_k^1)^d    \right)^2 = (\delta_k^1   \cdots   \delta_k^d)^{2\gamma-1}  (\delta_k^1)^d     \geq (\delta_k^1)^{d-1+2\gamma}. 
\]
This proves that $\sd \mu \leq d-1+2\gamma$, noting the use of $\gamma<1/2$ in the final inequality.  Here we are appealing to a well-known alternative formulation of the energy dimension in terms of summing over the measure of cubes and recalling that the Sobolev and energy dimensions coincide when they are strictly less than $d$, see \cite{peressolomyak}. 

The common upper bound of $(d-1)$ is then found by letting  $\gamma$ tend to zero. Finally, this common bound is sharp and this can be seen by taking the $(d-1)$-dimensional Lebesgue measure restricted to a cube and then embedded in $\rd$.  In particular, such a measure is $0$-flat but has Frostman (and Sobolev) dimension equal to $d-1$. In fact, the Fourier spectrum is given by $(d-1) \, \theta$ for all $\theta \in [0,1]$.
\end{proof}

\section{Background on fractal geometry and dimension theory}  \label{dimreview}

In this section we recall several further notions of fractal dimension, of both sets and measures, which we will use throughout the rest of the paper.  The goal now is to use familiar fractal geometric concepts to conclude $\gamma$-flatness and $(\gamma,\upsilon)$-flatness and then to apply the general results from the previous section in various settings of interest.

First, the \emph{upper box dimension} of a bounded set $X \subseteq \rd$ is defined by
\[
\ubd X = \limsup_{r \to 0} \frac{\log N_r(X)}{-\log r}
\]
where $N_r(X)$ is defined to be the smallest number of sets of diameter $r$ required to cover $X$.  Similarly, the \emph{lower box dimension} $\lbd X$ is defined by replacing the limsup with a liminf. If the upper and lower box dimensions coincide, then we refer to the common value as the \emph{box dimension} and denote it by $\bd X$.

The Assouad   spectrum, introduced in \cite{assouadspectrum}, interpolates between the upper box dimension  and the Assouad   dimension  in a meaningful way.  It  provides a parametrised family of dimensions by fixing the relationship between the two scales $r<R$ used to define Assouad  dimension.   Studying the dependence on the parameter within this family  thus yields finer and more nuanced information about the local structure of the set.  For example, one may understand which scales `witness' the behaviour described by the Assouad  dimension.  For $\theta \in (0,1)$, the \textit{Assouad spectrum} of $X$ is given by
\begin{align*}
\as X = \inf \Bigg\{ s \geq 0 :   \exists C>0 \ , \ \forall \ 0<r<1 \  ,  \ \forall x \in F \  ,  \ 
N_r(B(x,r^{\theta}) \cap X) \leq C \left(\frac{r^{\theta}}{r} \right)^{s} \Bigg\} .
\end{align*}
See \cite{jon:book} for more background and basic properties of the Assouad and lower spectra.  It was shown in \cite{assouadspectrum} that, for a bounded set $F \subseteq \mathbb{R}^d$, 
\begin{align}
\ubd X &\leq \as  X \leq    \frac{\ubd X}{1-\theta} . \label{basicbound} 
\end{align}
In particular, $\as X \to \ubd X$ as $\theta \to 0$.   The limit  $\lim_{\theta \to 1} \as X $ exists and equals the quasi-Assouad dimension, which is often but not always equal to the Assoaud dimension.  The quasi-Assouad dimension was introduced in \cite{quasiassouad}.

 The \textit{upper box dimension} of $\mu$  with bounded support is defined by
\begin{align*}
\ubd \mu = \inf \Big\{ s \geq 0 :  \exists C>0 \ , \ \forall \ 0<r< 1 \  , \ \forall x \in X \  ,  \  \mu(B(x,r)) \geq Cr^{s} \Big\} 
\end{align*}
and the \textit{lower box dimension} of $\mu$ is given by
\begin{align*}
\lbd \mu = \inf \Big\{ s \geq 0 : \exists  C>0 \  , \ \forall  r_0>0 \ , \  \exists \ 0<r<r_0 \ , \ \forall \ x\in X \  ,  \  \mu(B(x,r)) \geq Cr^{s} \Big\}.  
\end{align*}
If $\ubd \mu = \lbd \mu$, then we refer to the common value as the \textit{box dimension} of $\mu$, denoted by $\bd \mu$. These definitions of the box dimension of a measure were introduced    in \cite{antti}. It was shown that the bounds
\[
\ubd X \leq \ubd \mu
\]
and
\[
\lbd X \leq \lbd \mu
\]
always hold  and these inequalities are sharp in the sense that $X$ always supports a measure which attains equality in either of the above.  In this way, the box dimensions of a set are characterised by the box dimensions of measures they support.  

The upper box dimension of a measure is  dual to the Frostman dimension, $\frd \mu$ which we have already met. In general,
\[
\frd \mu \leq \sd \mu \leq \hd \mu \leq \hd X \leq \lbd X \leq \ubd X \leq \ubd \mu.
\]

\section{Quantitative flatness from geometry} 

\subsection{Basic estimates}

In this section we present some very simple applications of our `quantitative flatness' approach.  In this way we will see some well-known and straightforward facts recovered in this context. This serves as a warm up for later applications and as a sanity check against known constraints. 

One can see from Theorem \ref{knapp} that in order to obtain an obstruction to Fourier restriction, the aim is to find places where a lot of $\mu$ mass is found in a relatively small volume cuboid.  Of course, one can always take the cuboid to be a cube and the mass is then controlled by the Frostman dimension.  This basic application generalises  a known obstruction in the $p=2$ case.  More specifically, we recover a basic threshold for restriction due to  Hambrook--{\L}aba \cite{HL13} and also an estimate for the Fourier dimension due to  Mitsis \cite{Mit02}.

\begin{thm} \label{basic}
Let $\mu$ on $\rd$ be an arbitrary compactly supported measure with $\frd \mu<d$.  Then $\mu$ is $\gamma$-flat for all
\[
\gamma>\frac{\frd \mu}{d}.
\]
Further, 
\begin{enumerate}
\item If $\mu$   satisfies the  $E(p\to q)$ extension estimate \eqref{eq:extension}, then
\[
\frac{p}{q(p-1)} \leq \frac{\frd \mu}{d}.
\]
In particular, if $p=2$ then 
\[
q \geq \frac{2d}{ \frd \mu}.
\]
\item If $\mu$  satisfies the $I(p \to q)$ improving estimate \eqref{improvingest}, then
\[
\frac{1}{p}-\frac{1}{q} \leq \frac{\frd \mu}{d}.
\] 
\item The estimate
\[
\fd \mu \leq 2 \frd \mu
\]
holds.
\end{enumerate}
\end{thm}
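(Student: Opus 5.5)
The plan is to use cubes as the witnessing cuboids, so $\gamma$-flatness comes directly from the definition of the Frostman dimension. Fix $\gamma > \frd \mu/d$ and set $s = \gamma d$, so that $s > \frd \mu$.

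By the definition of $\frd \mu$ as a supremum, the Frostman bound $\mu(B(x,r)) \leq C r^{s}$ fails for every constant $C$. Hence there are points $x_k$ and radii $r_k > 0$ with
\[
\mu(B(x_k,r_k)) \geq k\, r_k^{s}.
\]
Since $\mu$ is finite, $k r_k^s \leq \mu(\rd)$, which forces $r_k \to 0$. Let $Q_k$ be the axis-parallel cube centred at $x_k$ with side $2r_k$. Then $Q_k \supseteq B(x_k,r_k)$ and $\mathcal{L}^d(Q_k) = (2r_k)^d$, so
\[
\mu(Q_k) \geq k r_k^{s} \gtrsim r_k^{\gamma d} \approx \mathcal{L}^d(Q_k)^\gamma,
\]
with $\mathcal{L}^d(Q_k) \to 0$. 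This proves that $\mu$ is $\gamma$-flat. The hypothesis $\frd \mu < d$ is what allows $\gamma < 1$, which is the only range where the statement has content.

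For (1), apply Theorem \ref{knapp} with every $\gamma > \frd\mu/d$ and let $\gamma \downarrow \frd\mu/d$. This gives $\frac{p}{q(p-1)} \leq \frd \mu/d$, and the case $p=2$ rearranges to $q \geq 2d/\frd\mu$. For (2), apply Theorem \ref{thm:improving} in the same way.

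For (3), I would use Theorem \ref{fourier2} rather than Corollary \ref{fourier}, because the latter depends on Mitsis' result, which (3) itself recovers. The witnessing cubes have all $d$ sides comparable, so take $m = d$ and $\upsilon = d$, since $\mathcal{L}^d(Q_k) \approx \delta_k^d$. The condition $\upsilon\gamma < m$ becomes $\gamma < 1$, which holds once $\gamma$ is close to $\frd\mu/d < 1$. Theorem \ref{fourier2} then gives $\fd \mu \leq 2d\gamma$, and letting $\gamma \to \frd\mu/d$ yields $\fd\mu \leq 2\frd\mu$.

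The main obstacle is circularity. The proof of Theorem \ref{fourier2} itself invokes Mitsis' estimate $\frd \pi\mu \geq \min\{\fd\pi\mu/2, m\}$, which for $m = d$ and $\pi$ the identity is just the statement of (3). So this argument recovers Mitsis' estimate within the flatness framework rather than giving an independent proof. If an independent proof is wanted, I would argue directly as in the heuristic before Theorem \ref{fourier4}. With $\phi$ a Schwartz bump adapted to $B(x_k,r_k)$ whose Fourier transform is supported at scale $r_k^{-1}$, we have
\[
\int \phi\, d\mu = \int \widehat\phi\, \overline{\widehat\mu}.
\]
If $|\widehat\mu(\xi)| \lesssim |\xi|^{-t/2}$ with $t > 2s$, the right-hand side is $\lesssim r_k^{d}\, r_k^{-d} r_k^{t/2} = r_k^{t/2}$, up to the contribution near the origin, which must be handled by a cancellation/uncertainty argument. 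This contradicts $\mu(B(x_k,r_k)) \geq k r_k^{s}$. The delicate point is controlling that low-frequency contribution, which is where Mitsis' proof does its work.
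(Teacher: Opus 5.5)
Your $\gamma$-flatness argument and parts (1) and (2) match the paper's proof: cubes come from the failure of the Frostman bound, and then you apply Theorems \ref{knapp} and \ref{thm:improving}. Part (3) is also correct, but it takes a different route from the paper, and your reason for choosing it rests on a misreading.

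The paper proves (3) with Corollary \ref{fourier}. Taking $\gamma=\alpha'/d$ there with $\alpha'\downarrow\frd\mu$ gives $2\alpha'(d-\frd\mu)/(d-\alpha')\to 2\frd\mu$. Corollary \ref{fourier} does not depend on the inequality $\fd\mu\le 2\frd\mu$. It only compares the Knapp necessary condition $q\ge 2/\gamma$ with the Stein--Tomas sufficient condition $q\ge 2+4(d-\alpha)/\beta$, valid for $\alpha$-Frostman measures with $|\widehat{\mu}(\xi)|\lesssim|\xi|^{-\beta/2}$. The proof of Stein--Tomas interpolates the Frostman and decay bounds over a dyadic decomposition of $\widehat{\mu}$ and never uses $\beta\le 2\alpha$. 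It is Corollary \ref{fouriernofrost}, not Corollary \ref{fourier}, that invokes Mitsis' inequality.

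So the paper's route is the non-circular one, and it is what justifies saying Mitsis' estimate is \emph{recovered} from the flatness framework. Your route through Theorem \ref{fourier2} with $m=\upsilon=d$ is logically valid, since that theorem is proved independently of Theorem \ref{basic}. But as you observe, with $\pi_k$ the identity its proof is literally a citation of Mitsis' theorem. It buys brevity, and it handles the edge case $\frd\mu=0$ without the hypothesis $\alpha\in(0,d)$ of Corollary \ref{fourier}, but it adds no content.

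Your fallback direct argument is in fact complete: the low-frequency term is not delicate and needs no cancellation. Take $t<\fd\mu$, and lower $t$ if necessary so that $t<2d$. The definition of $\fd\mu$ then gives $|\widehat{\mu}(\xi)|\lesssim|\xi|^{-t/2}$ for all $\xi\neq 0$. Because $t/2<d$, the function $|\xi|^{-t/2}$ is locally integrable, so $\int_{|\xi|\lesssim r^{-1}}|\xi|^{-t/2}\,d\xi\approx r^{t/2-d}$ with the origin included. Combined with your bump $\phi$ satisfying $|\widehat{\phi}|\lesssim r^d$ on $B(0,C/r)$, this gives $\mu(B(x,r))\lesssim r^{t/2}$ uniformly in $x$, so $\frd\mu\ge t/2$. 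This is essentially Mitsis' own proof.
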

\begin{proof}
Let $d>\alpha>\frd \mu$.  Then by definition we can find a sequence of cubes $Q_k$ with diameter tending to zero satisfying
\[
\mu(Q_k) \gtrsim \mathcal{L}^d(Q_k)^{\alpha/d}
\]
and therefore $\mu$ is $(\alpha/d)$-flat.  Applying Theorem \ref{knapp} we get 
\[
\frac{p}{q(p-1)} \leq \alpha/d
\]
and letting $\alpha \searrow \frd \mu$ gives the first result.  The second result follows similarly from Theorem \ref{thm:improving}.  For the final part, now suppose $\frd \mu <d$ and that $ \alpha<d$ and apply  Corollary \ref{fourier} to get 
\[
\fd \mu \leq \frac{2\frac{\alpha}{d}(d-\alpha) }{1-\frac{\alpha}{d}} = 2 \alpha
\] 
and letting $\alpha \searrow \frd \mu$ completes the proof.
\end{proof}

A trivial but important observation which is in some sense an extreme special case of quantitative flatness is that if a measure in $\rd$ is supported in an affine hyperplane (or merely gives positive mass to a hyperplane), then there are no non-trivial restriction estimates and the Fourier dimension is necessarily zero.  These observations follow easily from Theorem \ref{knapp} and Corollary \ref{fourier} but we also obtain the following more general observation which says that being sufficiently concentrated near an affine hyperplane gives the same conclusion.

\begin{cor}
Suppose $\mu$ is a compactly supported measure in $\rd$ and $V \subseteq \rd$ is an affine hyperplane such that
\begin{equation} \label{decayestimatehp}
\mu( V_\delta) \gtrsim \delta^s
\end{equation}
for all sufficiently small $\delta>0$ and some $s \in ( 0,1)$.     Then   $\mu$ is $s$-flat and, therefore, 
\begin{enumerate}
\item  If $\mu$ satisfies the $E(p \to q )$  extension estimate \eqref{eq:extension}, then
\[
\frac{p}{q(p-1)} \leq s.
\]
In particular, if $p=2$, then  $q \geq 2/s$. 
\item If $\mu$  satisfies the $I(p \to q)$ improving estimate \eqref{improvingest}, then
\[
\frac{1}{p}-\frac{1}{q} \leq s.
\] 
\item The estimate, 
\[
\fd \mu \leq \frac{2s (d-\alpha)}{1-s}
\]
holds, where $\alpha$ is the Frostman dimension of $\mu$.
\end{enumerate}
 In particular, if $s>0$ can be taken arbitrarily small, for example, if the right hand side of \eqref{decayestimatehp} is logarithmic, then there  are no non-trivial restriction estimates or $L^p$-improving estimates  and $\fd \mu = 0$.  The case when $\mu$ gives positive measure to a hyperplane corresponds to $s=0$.
\end{cor}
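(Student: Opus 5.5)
The plan is to show that the slab $V_\delta$, or a bounded piece of it, is itself a witnessing cuboid for $s$-flatness. Then (1), (2) and (3) follow from Theorem \ref{knapp}, Theorem \ref{thm:improving} and Corollary \ref{fourier} with $\gamma = s$.

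Since $\mu$ is compactly supported, fix $R>0$ with $\mathrm{supp}\,\mu \subseteq B(0,R)$. Choose $g \in SO(d)$ and $z \in V$ so that $V = g(\{0\}\times\mathbb{R}^{d-1}) + z$. For $\delta>0$ put
\[
Q_\delta = g\left([-\delta,\delta]\times[-2R',2R']^{d-1}\right)+z,
\]
where $R'$ is chosen (depending only on $R$ and $|z|$) so that $V_\delta \cap B(0,R) \subseteq Q_\delta$ for all $\delta \leq 1$. This is a cuboid in the sense of the paper. It has $\mathcal{L}^d(Q_\delta) \approx \delta$, with constants depending only on $d$, $R$ and $V$, and $\mathcal{L}^d(Q_\delta)\to 0$ as $\delta \to 0$. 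By \eqref{decayestimatehp},
\[
\mu(Q_\delta) \geq \mu(V_\delta) \gtrsim \delta^s \approx \mathcal{L}^d(Q_\delta)^s .
\]
Taking $\delta = 1/k$ for large $k$ gives a sequence witnessing $s$-flatness. (Incidentally, it even witnesses $(s,1)$-flatness.)

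With $s$-flatness in hand, item (1) is Theorem \ref{knapp} with $\gamma=s$, including the case $p=2$ giving $q\geq 2/s$. Item (2) is Theorem \ref{thm:improving}. Item (3) is Corollary \ref{fourier}, which needs $s<1$ (assumed) and $\alpha = \frd\mu \in (0,d)$. I expect this hypothesis on $\alpha$ to be the only delicate point. Hyperplane concentration with $s<1$ suggests $\alpha \leq d-1+s < d$, in line with the preceding theorem. The degenerate case $\alpha = 0$ should be set aside, or read as giving the trivial bound $\fd\mu \leq 2\alpha = 0$ via Theorem \ref{basic}.

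For the ``in particular'' statements, suppose \eqref{decayestimatehp} holds for every $s>0$. For instance, $\mu(V_\delta)\gtrsim 1/\log(1/\delta)$ dominates $\delta^s$ for each fixed $s>0$. Then $\mu$ is $s$-flat for all $s>0$, and letting $s\to 0$ gives the following:
\begin{enumerate}
\item $p/(q(p-1)) \leq 0$ forces $p=\infty$ or $q=\infty$, so only the trivial extension estimates survive.
\item $1/p - 1/q \leq 0$ rules out every improving estimate with $q>p$, as in Corollary \ref{noimpro}.
\item $2s(d-\alpha)/(1-s) \to 0$ gives $\fd\mu = 0$.
\end{enumerate}
Finally, if $\mu(V)>0$, then $\mu(V_\delta)\geq \mu(V)$ for all $\delta$, which is the case $s=0$. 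The same cuboids then witness $0$-flatness, and the three theorems apply directly with $\gamma = 0$.
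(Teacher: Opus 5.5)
Your proposal is correct and follows essentially the same route as the paper: enclose the part of the slab $V_\delta$ meeting the support of $\mu$ in a cuboid with one side $\approx\delta$ and the others $\approx 1$, deduce $s$-flatness from $\mu(Q)\geq\mu(V_\delta)\gtrsim\delta^s$, and then invoke Theorem~\ref{knapp}, Theorem~\ref{thm:improving} and Corollary~\ref{fourier}; your explicit cuboid and your check that $\alpha\leq d-1+s<d$ are simply more careful than the paper's one-line argument. One small slip in your aside: $p/(q(p-1))\leq 0$ forces $q=\infty$ (that is, $q'=1$), not ``$p=\infty$ or $q=\infty$'', because at $p=\infty$ the left-hand side equals $1/q$, which is positive for finite $q$.
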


\begin{proof}
Let $Q \subseteq V_\delta$ be a cuboid which contains the support of $\mu$ intersected with $V_\delta$ and has one side of length $\approx \delta$ and the remaining sides of length $\approx 1$.  Then
\[
\mu(Q) \geq \mu(V_\delta) \gtrsim \delta^s
\]
and so $\mu$ is $s$-flat  and the results follow from  Theorem \ref{knapp}, Theorem \ref{thm:improving} and Corollary \ref{fourier}.
\end{proof}

\subsection{$C^2$ surfaces:~using curvature} \label{sec:surfaces}

Many of the key examples in Fourier restriction theory are smooth manifolds, such as the sphere, cone, or hyperboloid.  It is also of interest to study smooth manifolds in general, see, for example, \cite{bak,leevargas, mattilaFourier, muller}.  Here we show how quantitative flatness can be used in this setting by leveraging (lack of) curvature in the manifold.

We consider general  $C^2$ manifolds $X \subseteq \rd$ of dimension $s \in \{1, \dots, d-1\}$.  Such a manifold comes with a natural surface measure, which is the analogue of $s$-dimensional Lebesgue measure on the surface and these are our main measures of interest.    A point $x \in X$  is said to be of   \emph{rank} $k$ if  there are $k$ non-zero principal curvatures and $s-k$ principal curvatures equal to zero.  Here $s-k$ is the \emph{nullity}.  Moreover, we say a point  $x \in X$  is   of   \emph{ambient rank} $k$ if 
\[
r^*(x) := \limsup_{y \to x} r(y) = k
\]
where $r(y)$ denotes the rank of $y$.  We say $n^*(x) = s-r^*(x)$ is the \emph{ambient nullity} of $x$.  Then $r^*(x) \geq r(x)$ for all $x$ but this inequality can be strict.  For example, 
\[
X = \{(x,x^2): x \in [-1,1]\}
\]
is a $C^2$ manifold of dimension 1 in the plane.  The rank of every point is maximal (equal to 1) apart from at $(0,0)$ where it is 0.  However, the ambient rank of every point is 1.  Of course, if $X$ is of constant rank, that is, $r(x)$ is the same for all $x \in X$, then ambient rank and rank coincide at all points.

There is considerable interest in both the Fourier restriction and Fourier decay problem for these surface measures.  The Fourier restriction theory is fairly classical but the Fourier decay problem has seen a lot of attention recently.  For example, it was proved in \cite{zhu2} that any smooth constant rank $k$ hypersurface (that is, of co-dimension $d-s=1$) has Fourier dimension 
\[
\fd X = k.
\]
It is an interesting problem to relax the assumptions, especially the constant rank and co-dimensionality assumptions.  We make progress in both of these  directions.
\begin{thm} \label{smoothsurface1}
Let $\mu$ be the surface measure on a bounded $C^2$ manifold $X \subseteq \rd$ of dimension $s \in [1,d-1]$ and let $x \in X$.  Then $\mu$ is $(\gamma, \upsilon)$-flat for
\[
\gamma = \frac{r^*(x)}{r^*(x)+2(d-s)}
\]
and
\[
\upsilon = r^*(x)/2+d-s
\]
where $r^*(x)$ is the ambient rank of $x$.
Therefore:
\begin{enumerate}
\item If  the $E(p\to q)$ extension estimate 
\eqref{eq:extension} holds for $\mu$, then
\[
\frac{p}{q(p-1)} \leq \min_{x \in X}\frac{r^*(x)}{r^*(x)+2(d-s)},
\]
and, for $p=2$, 
\[
q \geq \max_{x \in X}\frac{2r^*(x)+4(d-s)}{r^*(x)} .
\]
\item If $\mu$  satisfies the $I(p \to q)$ improving estimate \eqref{improvingest}, then
\[
\frac{1}{p}-\frac{1}{q} \leq \min_{x \in X}\frac{r^*(x)}{r^*(x)+2(d-s)}.
\] 
\item The bound
\[
\fd \mu \leq \min_{x \in X} r^*(x)
\]
  holds. In particular, if there exists a point with non-trivial ambient nullity, then $\mu$ is not a Salem measure. Further, for all $x \in X$
  \[
  \fs \mu \leq  r^*(x)+\big(s-r^*(x)/2 \big)\theta 
  \]
  for all $0 \leq \theta \leq 2r^*(x)/\left(r^*(x)+2(d-s)\right)$. This significantly beats the general estimate for the Fourier spectrum coming from the Fourier dimension and \eqref{bound} since   $s-r^*(x)/2 <d$.
\end{enumerate}
\end{thm}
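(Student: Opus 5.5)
The core of the argument is the geometric flatness claim; items (1)--(3) then follow from the abstract results of Section 2. Fix $x\in X$ and write $r=r^*(x)$. Since $r^*(x)=\limsup_{y\to x} r(y)$ and rank is lower semicontinuous for a $C^2$ second fundamental form (nonvanishing eigenvalues persist), points near $x$ have rank at most $r$. By the definition of the $\limsup$, there is a neighbourhood $U$ of $x$ on which $r(y)\leq r$, and a point $y_0\in U$ with $r(y_0)=r$. Since rank $r$ is then the maximal value on $U$, it is attained on an open set near $y_0$. On that open set the rank is constant and equal to $r$, and I will work there.

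Near a point $y$ in this constant rank region, I write $X$ locally as a graph over its tangent plane $T_yX\cong\mathbb{R}^s$, with $d-s$ normal coordinates given by $C^2$ functions $\phi_1,\dots,\phi_{d-s}$ satisfying $\phi_i(0)=0$ and $\nabla\phi_i(0)=0$. The rank $r$ condition says that all the Hessians $D^2\phi_i(0)$ annihilate an $(s-r)$-dimensional subspace $N$ of tangent directions, namely the common nullity. Choose coordinates $u=(u',u'')\in\mathbb{R}^r\times\mathbb{R}^{s-r}$ with $u''\in N$, and take the cap
\[
\{|u'|\leq\delta,\ |u''|\leq c\}
\]
for some small $c$ independent of $\delta$. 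The cuboid $Q$ will have $r$ sides of length $\delta$, $s-r$ sides of length $\approx 1$, and $d-s$ normal sides of length $\approx\delta^2$. This gives $\mathcal{L}^d(Q)\approx \delta^{r+2(d-s)}$ and $\mu(Q)\gtrsim\delta^{r}$, hence $\gamma=r/(r+2(d-s))$. The shortest side is $\delta_k=\delta^2$, and $\mathcal{L}^d(Q)=\delta_k^{r/2+d-s}$ gives $\upsilon=r/2+d-s$.

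\textbf{Main obstacle.} The hard step is justifying that the normal deviation over the cap is $O(\delta^2)$ even though the $u''$-directions have unit length. A pointwise Taylor expansion only gives smallness in $u''$ for $|u''|\lesssim\delta$. I would first use constant rank: a standard result says a constant rank $C^2$ submanifold is ruled along the nullity distribution (the Gauss map is constant along null leaves), so the null leaves are pieces of affine $(s-r)$-planes. The surface is therefore locally a union of flat $(s-r)$-dimensional pieces. Thickening one such leaf by $\delta$ in the $r$ curved directions stays within $O(\delta^2)$ of the tangent plane along the whole leaf, because the tangent plane is constant along the leaf. If this regularity argument proves delicate in $C^2$, I would fall back to $|u''|\leq\delta^{\eps}$ and lose an $\eps$, which still suffices because the conclusions are all limits.

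With flatness established, I would derive the three items as follows.
\begin{enumerate}
\item Theorem \ref{knapp} gives item (1); minimising over $x$ is allowed because each $x$ yields its own witnessing sequence.
\item Theorem \ref{thm:improving} gives item (2) in the same way.
\item For the Fourier dimension in item (3), I apply Theorem \ref{fourier2} with $m=d-s$: the shortest $d-s$ sides are all comparable, of size $\delta^2$. We have $\upsilon\gamma=r/2<m$ unless $r\geq 2(d-s)$. Whenever the condition holds, Theorem \ref{fourier2} gives $\fd\mu\leq 2\upsilon\gamma=r$.
\item In the remaining case I would instead use Corollary \ref{fourier} with the Frostman dimension $\alpha=s$, the natural value for surface measure. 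This gives
\[
\fd\mu\leq \frac{2\gamma(d-s)}{1-\gamma}=\frac{2r(d-s)}{2(d-s)}=r,
\]
which in fact covers all cases at once.
\item The Fourier spectrum bound comes from Corollary \ref{fourierspectrum} with $\alpha=s$, since $(\alpha-d\gamma)/(1-\gamma)=s-r/2$. The stated range of $\theta$ is exactly $\theta\leq 2\gamma$.
\end{enumerate}
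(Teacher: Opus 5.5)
Your argument is correct and is essentially the paper's proof. The paper likewise uses Hartman's lemma to obtain a flat $(s-r^*(x))$-dimensional piece of $X$ along which the tangent plane is constant. It builds the same Knapp cuboid, parametrised there with sides $\approx 1$ along the flat piece, $\delta^{1/2}$ in the remaining tangent directions and $\delta$ in the normal directions. It then reads off (1)--(3) from Theorems \ref{knapp} and \ref{thm:improving} and Corollaries \ref{fourier} and \ref{fourierspectrum} with $\alpha=s$. Your move to a nearby point $y_0$ of constant rank $r^*(x)$ is a harmless refinement that lets you quote the classical constant-nullity version of that lemma. (Note that the bound $r(y)\le r^*(x)$ near $x$ comes from the $\limsup$; lower semicontinuity is what makes the maximal-rank set open.)

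The one thing to drop is your fallback. For a $C^2$ graph, Taylor's theorem only gives an $o(|u|^2)$ remainder, so on $\{|u'|\le\delta,\ |u''|\le\delta^{\eps}\}$ the normal deviation is $O(\delta^2)+o(\delta^{2\eps})$ rather than $O(\delta^2)$. The vanishing of the Hessian on $N$ at a single point gives no control on that remainder. The ruling result is therefore genuinely indispensable, just as in the paper.
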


\begin{proof}
First note that since the ambient rank is an integer, the minimum 
\[
\min_{x \in X} r^*(x)
\]
is realised and so we may fix a minimising $x$.  Then, by \cite[Lemma 3.1]{hartman}, there is a ball $B_0 \subseteq \mathbb{R}^{s-r^*(x)}$ which embeds isometrically into $X$ with the centre of the ball mapping to $x$. Write $B \subseteq X$ for the embedding of this ball.  Let $V \subseteq \rd$ be the $s$-dimensional tangent plane of $X$ at $x$.  This tangent plane  exists because $X$ is $C^2$.  In particular,  $x \in B \subseteq V$. For $\delta>0$ small, first let
\[
Q' = B_{\delta^{1/2}} \cap V
\]
be the $\delta^{1/2}$ neighbourhood of $B$ inside $V$ and then let 
\[
Q'' = Q'_\delta
\]
be the $\delta$ neighbourhood of $Q'$ inside $\rd$.  Then $Q''$ contains a cuboid $Q$ centred at $x$ with $s-r^*(x)$ many sides of length $\approx 1$ oriented in the `flat directions' inside $B$, $r^*(x)$ many sides of length $\approx \delta^{1/2}$ in the non-flat directions inside $V$ but orthogonal to $B$, and $d-s$ many sides of length $\approx \delta$ in the directions in the normal space of $X$ at $x$, that is, the orthogonal complement of $V$ at $x$.  Then
\[
\mathcal{L}^d(Q) \approx \delta^{r^*(x)/2+d-s}.
\]
Moreover, since $X$ is $C^2$, and $Q$ extends a distance $\delta^{1/2}$ in the non-flat directions inside $V$ it captures a $\approx \delta^{1/2}$ portion of $X$ in those directions too.  That is, $X$ cannot curve away from $V$ fast enough to escape $Q$.  Then, since $\mu$ is the surface measure on $X$,
\[
\mu(Q) \approx \delta^{r^*(x)/2}.
\]
Therefore, $\mu$ is $(\gamma, \upsilon)$-flat for 
\[
\gamma = \frac{r^*(x)/2}{r^*(x)/2+d-s}
\]
and
\[
\upsilon = r^*(x)/2+d-s
\]
as required.  The estimates in (1) and (2)  then follow immediately from Theorem \ref{knapp} and Theorem \ref{thm:improving}, respectively.  Turning our attention to the Fourier dimension and (3), noting that the Frostman dimension of the surface measure is the dimension of the ambient manifold, that is, $\frd \mu = s$, applying Corollary \ref{fourier} we get
\[
\fd\mu  \leq \frac{2 \gamma(d-s)}{1-\gamma} =  r^*(x)
\]
as required.  Finally, the bounds for the Fourier spectrum follow similarly from Corollary \ref{fourierspectrum}.
\end{proof}
The previous result established that the Fourier dimension of the surface measure on a $C^2$ manifold of arbitrary dimension and variable rank is bounded above by the smallest ambient rank of a point.  This estimate is as good as one can hope for in this level of generality.  In particular, the ambient rank cannot be replaced by the rank, which is what one might have naively conjectured. Indeed,  let us reconsider the simple example 
\[
X = \{(x,x^2): x \in [-1,1]\}.
\]
Here the minimal rank is zero but the minimal ambient rank is 1.  However, it follows easily from a stationary phase argument that the Fourier dimension of the surface measure is, in fact, 1.

Next we consider arbitrary measures on $X$ and the Fourier dimension of the manifold itself.  In the constant rank $r$ case, it is natural to conjecture that 
\begin{equation} \label{surfaceconjecture}
\fd X \leq r.
\end{equation}
In general nothing more can be said, because if $s \leq d-2$, then it is possible for $X$ to be contained in a hyperplane, even when the rank is maximal, in which case the Fourier dimension is zero.  However, if $s=d-1$, that is, we are in the setting of co-dimension 1 hypersurfaces, then  it follows by a straightforward stationary phase argument that the Fourier dimension of a small piece of the surface measure is at least $r$, which gives the corresponding lower bound.  The main interest is therefore in the upper bound \eqref{surfaceconjecture}, but bounding the Fourier dimension of $X$ above requires bounding the Fourier dimension uniformly above for \emph{all} measures on $X$ and this is  difficult.  The above conjecture \eqref{surfaceconjecture} has recently been verified in the co-dimension 1 case, that is, $s=d-1$ \cite{zhu2}.  In the following subsection we provide an alternative approach to this problem and show that if the dimension of the manifold is small enough, then \eqref{surfaceconjecture} is not optimal and can be improved in certain cases. In the following, we verify \eqref{surfaceconjecture} in the constant curvature arbitrary co-dimension case   under the assumption $r/2+s<d$.  Since there is nothing to prove when $r=s$ because then $\fd X \leq \hd X = s$, we may assume $r \leq s-1$.  Therefore,   we resolve the (constant rank) conjecture in all cases when 
\[
s<\frac{2d+1}{3}.
\]
In the variable rank case, we show the \emph{largest} ambient rank is an upper bound. At first sight this might seem sub-optimal because in Theorem \ref{smoothsurface1} we proved that the \emph{smallest} ambient rank is an upper bound for the Fourier dimension of the surface measure.  However, when one is considering the manifold itself,  in the variable rank case one may choose measures which avoid the locations with lowest rank and so instead the largest rank becomes the relevant quantity.  For example, suppose $X$ contains a  genuinely flat plane of co-dimension 1 and a curved part (where we glue the pieces together in a $C^2$ way).  Then the smallest ambient rank is zero, but the Fourier dimension can be strictly positive by witnessing this via measures supported only on the curved part.

\begin{thm} \label{smoothsurface2}
Let $X \subseteq \rd$ be a bounded $C^2$ manifold  of dimension $s \in [1,d-1]$.  Let
\[
r^*=\max_{x \in X} r^*(x)
\]
be the largest ambient rank over $X$. Then
\[
 \fd X \leq \frac{r^*d}{d+r^*/2-s}.
\]
 Moreover, if we assume that $r^*/2+s< d$, then 
\[
 \fd X \leq r^*
\]
thus  resolving \eqref{surfaceconjecture} (and extending it to variable rank) in this case.
\end{thm}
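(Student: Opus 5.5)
The plan is to run the argument of Theorem \ref{smoothsurface1} for an \emph{arbitrary} measure $\mu$ on $X$, replacing the surface-measure computation by pigeonholing. Then feed the resulting $(\gamma,\upsilon)$-flatness into Corollary \ref{fourier} (with the Mitsis bound of Theorem \ref{basic}) for the first estimate, and into Theorem \ref{fourier2} for the second.

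\emph{Step 1 (geometry, uniform in $x$).} I aim to show the following. There is a fixed $\eta>0$ such that for every small $\delta>0$, $X$ can be covered by $\lesssim \delta^{-r^*/2}$ cuboids. Each cuboid has $s-r^*$ sides of length $\approx\eta$ (in directions of the tangent plane along which $X$ is flat to second order), $r^*$ sides of length $\approx\delta^{1/2}$ in the remaining tangent directions, and $d-s$ sides of length $\approx\delta$ in the normal directions. Since every point has ambient rank at most $r^*$, each $x\in X$ has a neighbourhood in which the rank is at most $r^*$. I would first treat the open set where the rank is locally constant and equal to its local maximum. There \cite[Lemma 3.1]{hartman} supplies flat $(s-r^*)$-balls (or larger ones, which only helps), and $C^2$ regularity keeps $X$ within $\delta$ of the tangent plane over a $\delta^{1/2}$-distance in the curved directions, exactly as in the proof of Theorem \ref{smoothsurface1}. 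I would then try to extend to points of lower rank by noting that the second fundamental form there has at least $s-r^*$ null directions, and argue via compactness that a uniform $\eta$ works. \textbf{This is the step I expect to be the main obstacle}: Hartman's lemma needs constant rank, and $\mu$ may charge the lower-rank strata. The first thing I would try is to exploit upper semicontinuity of $r^*$ and compactness of $\overline{X}$, and work stratum by stratum. On each stratum I would choose the flat directions by continuity of the null space, and shrink $\eta$ if necessary so that the deviation in those directions is $O(\delta)$.

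\emph{Step 2 (pigeonhole).} Given any measure $\mu$ on $X$, some cuboid $Q$ from the cover satisfies
\[
\mu(Q)\gtrsim \delta^{r^*/2} \qquad\text{and}\qquad \mathcal{L}^d(Q)\approx \delta^{r^*/2+d-s}.
\]
Hence $\mu$ is $(\gamma,\upsilon)$-flat with
\[
\gamma=\frac{r^*}{r^*+2(d-s)},\qquad \upsilon=\frac{r^*}{2}+d-s,
\]
the same numerology as in Theorem \ref{smoothsurface1}, now with $r^*=\max_x r^*(x)$.

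\emph{Step 3 (first bound).} Let $\alpha=\frd\mu$. Corollary \ref{fourier} gives $\fd\mu\le 2\gamma(d-\alpha)/(1-\gamma)$, and Theorem \ref{basic} gives $\fd\mu\le 2\alpha$. Balancing these two bounds at $\alpha=\gamma d$ yields $\fd\mu\le 2\gamma d = r^*d/(d+r^*/2-s)$. Taking the supremum over all $\mu$ on $X$ gives the first claim.

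\emph{Step 4 (second bound).} In the witnessing cuboids the $m=d-s$ shortest sides all have length $\approx\delta$, so they are comparable up to uniform constants. Moreover $\upsilon\gamma=r^*/2$, so the hypothesis $r^*/2+s<d$ is exactly $\upsilon\gamma<m$. Theorem \ref{fourier2} then gives $\fd\mu\le 2\upsilon\gamma=r^*$ for every $\mu$ on $X$, and therefore $\fd X\le r^*$.
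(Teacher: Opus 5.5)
Your Steps 2--4 are exactly the paper's argument. You use the same cuboids ($s-r^*$ sides $\approx 1$, $r^*$ sides $\approx\delta^{1/2}$, $d-s$ sides $\approx\delta$), the same pigeonholing over $\lesssim\delta^{-r^*/2}$ pieces, and the same $\gamma=\frac{r^*/2}{r^*/2+d-s}$ and $\upsilon=r^*/2+d-s$. You then apply Corollary \ref{fouriernofrost} (Corollary \ref{fourier} balanced against Mitsis) to get $\fd X\le 2\gamma d$, and Theorem \ref{fourier2} with $m=d-s$ and $\upsilon\gamma=r^*/2<d-s$ to get $\fd X\le r^*$.

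The only divergence is the step you flagged, and there your proposed fallback would not work. Choosing null directions of the second fundamental form at lower-rank points and shrinking $\eta$ until the deviation is $O(\delta)$ fails for a concrete reason. A null direction at a point only gives that $X$ leaves the tangent plane at rate $o(t^2)$. So over a \emph{fixed} length $\eta$ the deviation is some fixed quantity depending on $\eta$, not $O(\delta)$ as $\delta\to 0$. Forcing it below $\delta$ makes $\eta$ depend on $\delta$ and destroys the count $\delta^{-r^*/2}$. What you need are genuinely flat pieces of fixed size. Compactness of $\overline X$ does not by itself supply a uniform size either, since $X$ need not be closed and flat pieces can shrink towards its boundary.

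The paper avoids stratifying altogether. Every point has ambient nullity at least $s-r^*$, that is, rank $\le r^*$ on a neighbourhood. So \cite[Lemma 3.1]{hartman}, used exactly as in Theorem \ref{smoothsurface1}, gives a genuinely flat isometric $(s-r^*)$-ball through every point, lower-rank points included. This yields a $C^2$ foliation of $X$ by flat leaves along which the tangent plane is constant.

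Uniformity then comes not from compactness but from a countable decomposition. Write $X$ as a countable union of pieces whose leaves contain balls of diameter $\ge 1/m$, and pick a piece of positive $\mu$-mass. A $\delta^{1/2}$-net in the $r^*$-dimensional leaf-parameter space gives $\lesssim\delta^{-r^*/2}$ leaves whose $\delta^{1/2}$-neighbourhoods cover that piece, and pigeonholing there produces the heavy cuboid. That cuboid witnesses flatness of $\mu$ itself, with constants depending on the mass of the piece. So you never need to restrict $\mu$, which could change its Fourier dimension. With this replacement for Step 1, the rest of your argument goes through unchanged.
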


\begin{proof}
For every point $x \in X$, the ambient nullity is at least $s-r^*$ and  therefore, by \cite[Lemma 3.1]{hartman}, there is a $C^2$ foliation of $X$ by leaves given by isometric copies of pieces of $\mathbb{R}^{s-r^*}$. Now, let $\nu$ be an arbitrary non-zero finite Borel measure on $X$. We may assume that $\nu$ is supported on a part of $X$ foliated by leaves which  contain an $(s-r^*)$-dimensional ball of diameter uniformly bounded below, that is $\gtrsim 1$, over the whole foliation.  Indeed, we may write  $X$ as a countable union of pieces foliated by leaves containing balls of diameter at least $1/m$ and then restrict to one of these pieces with positive $\nu$-mass. By taking a maximal $\delta^{1/2}$ separated set in the $r^*$-dimensional parameter space underlying the foliation, we can find a subset of the foliation of size at most $\lesssim \delta^{-r^*/2}$ such that the $\approx \delta^{1/2}$ neighbourhoods (inside $X$) of the leaves in the subset form a cover of $X$.  Therefore, by pigeonholing, there must exist an element $L$  from the above constructed cover of $X$  which satisfies
\[
\nu(L) \gtrsim \delta^{r^*/2}.
\]
The leaf associated with $L$ contains an  $(s-r^*)$-dimensional flat ball and the $s$-dimensional tangent space of $X$ is constant on this ball. Now, let $Q$ be a cuboid containing   the $\delta$-neighbourhood of $L$ (in $\rd$)  with $s-r^*$ many sides of length $\approx 1$  oriented in the `flat directions' of the leaf associated with $L$, $r^*$ many sides of length $\approx \delta^{1/2}$ in the (potentially) non-flat directions inside the tangent space of $X$ associated with the leaf  but orthogonal to the leaf, and $d-s$ many sides of length $\approx \delta$ in the directions in the normal space of $X$. Here we are again using that $X$ is $C^2$ and so cannot curve away from its tangent planes any faster than quadratic. Then, once again
\[
\mathcal{L}^d(Q) \approx \delta^{r^*/2+d-s}
\]
and
\[
\nu(Q) \gtrsim \nu(L) \gtrsim \delta^{r^*/2},
\]
and so $\nu$ is $(\gamma, \upsilon)$-flat for 
\[
\gamma = \frac{r^*/2}{r^*/2+d-s}
\]
and
\[
\upsilon = r^*/2+d-s.
\]
This time applying Corollary \ref{fourier} is sub-optimal because we do not know the Frostman dimension of $\nu$ and we are left to apply Corollary \ref{fouriernofrost} which   yields the first general but not optimal bound on the Fourier dimension of $\nu$ and thus $X$.  To obtain the optimal bound,  we seek to apply Corollary \ref{fourier2}. For this we need
\[
\gamma \upsilon = r^*/2< d-s
\]
because the shortest $d-s$ sides of $Q$ are of size $\approx \delta$.  Under this assumption, 
\[
\fd \nu \leq 2\gamma \upsilon = r^*,
\]
as required. Since $\nu$ was arbitrary we get the desired upper bound for $\fd X$.
\end{proof}

\subsection{Smooth surfaces:~using dimensionality} \label{sec:surfaces2}

By following an approach of Gressman \cite{gressman} we next provide an alternative strategy for bounding the Fourier dimension of smooth submanifolds where we leverage low dimensionality, rather than low rank as in the previous subsection.  We arrive at some surprising conclusions, including that smooth curves in $\rd$ have Fourier dimension at most $4/(d+1)$.

We again consider $s$-dimensional submanifolds in $\rd$. Gressman \cite{gressman} introduced the quantity $H=H(s,d)$ which he called the \emph{homogeneous dimension} and is  equal to the smallest positive integer which can be written as  the sum of the degrees of some collection of $d$ distinct, nonconstant monomials in $s$ variables. Interpreting the main result of \cite{gressman} in our language, we get the following.

\begin{prop}(Gressman \cite[Theorem 1 (1)]{gressman})
If $\mu$ is a non-zero finite measure on a smooth immersed   $s$-dimensional    submanifold of $\rd$, then $\mu$ is $\gamma$-flat for all
\[
\gamma>\frac{s}{H(s,d)}.
\]
\end{prop}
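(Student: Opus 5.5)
This statement is essentially a translation of Gressman's result into the language of Definition \ref{quantflat}, so the plan is to recall what \cite[Theorem 1 (1)]{gressman} says and match the quantifiers. Gressman studies the Oberlin condition: a measure $\mu$ satisfies it with exponent $\alpha$ if $\mu(Q) \lesssim \mathcal{L}^d(Q)^\alpha$ for all cuboids (or all convex sets, equivalently up to constants by John's ellipsoid theorem) $Q$. Part (1) of his theorem asserts that for any non-zero measure on a smooth immersed $s$-dimensional submanifold, the Oberlin condition can only hold for $\alpha \leq s/H(s,d)$. I would therefore argue by contraposition. Fix $\gamma > s/H(s,d)$, choose $\alpha$ with $s/H(s,d) < \alpha < \gamma$, and use Gressman's theorem to conclude that $\mu$ fails the Oberlin condition with exponent $\alpha$.

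Failure of the Oberlin condition means $\sup_Q \mu(Q)/\mathcal{L}^d(Q)^\alpha = \infty$. So there are cuboids $Q_k$ with $\mu(Q_k) \geq k\, \mathcal{L}^d(Q_k)^\alpha$. The remaining point is to upgrade this to the requirement in Definition \ref{quantflat} that $\mathcal{L}^d(Q_k) \to 0$. Since $\mu$ is finite, $k\,\mathcal{L}^d(Q_k)^\alpha \leq \mu(\rd)$, which forces $\mathcal{L}^d(Q_k) \to 0$. For large $k$ we also have $\mu(Q_k) \geq \mathcal{L}^d(Q_k)^\alpha \geq \mathcal{L}^d(Q_k)^\gamma$, since $\mathcal{L}^d(Q_k) <1$ and $\gamma > \alpha$. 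The only subtlety is whether Gressman's convex sets are general convex bodies rather than cuboids. In that case, John's theorem replaces each convex set $K$ by a cuboid $Q \supseteq K$ with $\mathcal{L}^d(Q) \approx_d \mathcal{L}^d(K)$, and the implicit constants are harmless.

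The main obstacle is therefore not analytic but bookkeeping: checking that Gressman's normalisation of the exponent is exactly $s/H(s,d)$ in our $\mathcal{L}^d(Q)^\gamma$ convention, and that his necessity statement holds for \emph{every} non-zero finite measure on the submanifold rather than only for smooth densities. If the latter were only stated for nice measures, I would localise instead. Near a point where the relevant monomial configuration is realised (in Gressman's proof, via Taylor expansion, a neighbourhood of a point is captured in a box with side lengths $\delta^{\deg}$ given by the $d$ chosen monomials, of total volume $\delta^{H}$), I would cover a small piece of the manifold by $\approx \delta^{-s}$ such boxes. Pigeonholing then gives one box $Q$ with $\mu(Q) \gtrsim \delta^{s} \approx \mathcal{L}^d(Q)^{s/H}$, up to a $\delta^{-\epsilon}$ loss, which again yields $\gamma$-flatness for every $\gamma > s/H(s,d)$.
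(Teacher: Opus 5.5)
Your proposal is correct and takes the same approach as the paper, which gives no separate argument and simply reads Gressman's Theorem 1(1) (the Oberlin condition $\mu(K)\lesssim\mathcal{L}^d(K)^\alpha$ over convex $K$ fails for every $\alpha>s/H(s,d)$) as $\gamma$-flatness; your use of the finiteness of $\mu$ to force $\mathcal{L}^d(Q_k)\to 0$, together with John's theorem to pass from convex sets to cuboids, is exactly the bookkeeping the paper leaves implicit. The one case you don't mention, a degenerate convex set of zero volume and positive mass, is handled by slightly thickening it (which even gives $0$-flatness), and since the paper applies Gressman's part (1) to arbitrary non-zero measures, your localisation fallback is not needed.
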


Combining the previous result with Corollary \ref{fourier}, Corollary \ref{fourierspectrum} and Mitsis' Frostman dimension bound (see Corollary \ref{fouriernofrost}) we get an alternative way to bound the Fourier dimension of a submanifold which is especially good when $s$ is small. Note that there are no assumptions regarding rank or nullity in what follows.

\begin{cor} \label{surfacedim1}
Let  $X \subseteq \rd$  be a smooth    $s$-dimensional submanifold.  If $\mu$ is  an arbitrary non-zero finite compactly supported measure on $X$, then
\[
\fd \mu \, \leq \, \min\left\{ \frac{2s(d-\alpha)}{H(s,d)-s}  , \  2\alpha \right\} \,  \leq \,  \frac{2sd}{H(s,d)}
\]
where $\alpha$ is the Frostman dimension of $\mu$ and $H(s,d)$ is the homogeneous dimension.  Further,  if $\mu$ is (a compact piece of) the surface measure, then $\alpha=s$ and
\[
\fd \mu \leq \frac{2s(d-s)}{H(s,d)-s}+ \theta \cdot \frac{s(H(s,d)-d)}{H(s,d)-d}
\]
for all $\theta \in [0,2s/H(s,d)]$. Finally, we can bound the dimension of the manifold itself by
\[
\fd X \leq \frac{2sd}{H(s,d)}.
\]
\end{cor}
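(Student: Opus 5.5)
Combine Gressman's proposition with the abstract Fourier decay bounds from Section 2, then optimise over $\gamma$.

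\textbf{Step 1 (flatness).} Fix $\mu$ on $X$. By the Proposition of Gressman, $\mu$ is $\gamma$-flat for every $\gamma>s/H$, where $H=H(s,d)$. We may assume $s/H<1$; otherwise the claimed bounds are at least the trivial bound and there is nothing to prove.

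\textbf{Step 2 (the $\alpha$-dependent bound).} Apply Corollary \ref{fourier} with such a $\gamma$:
\[
\fd\mu\le \frac{2\gamma(d-\alpha)}{1-\gamma}.
\]
Letting $\gamma\searrow s/H$ gives
\[
\frac{2(s/H)(d-\alpha)}{1-s/H}=\frac{2s(d-\alpha)}{H-s},
\]
which is the first term in the minimum. The second term, $2\alpha$, is Mitsis' bound (Theorem \ref{basic}(3)). One caveat: Corollary \ref{fourier} was stated for $\alpha\in(0,d)$. Since $X$ is a Lebesgue null set we have $\alpha\le s<d$, and the case $\alpha=0$ is immediate from $\fd\mu\le 2\alpha$.

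\textbf{Step 3 (removing $\alpha$).} The first term decreases in $\alpha$ and the second increases, so the minimum is largest where they balance. Setting $2s(d-\alpha)=2\alpha(H-s)$ gives $\alpha=sd/H$, and the common value is $2sd/H$. Equivalently, this is Corollary \ref{fouriernofrost} with $\gamma\searrow s/H$, giving $\fd\mu\le 2\gamma d$.

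\textbf{Step 4 (surface measure).} For a compact piece of surface measure, $\alpha=s$ because the surface measure of a $\delta$-ball is $\approx\delta^s$. Apply Corollary \ref{fourierspectrum} with $\gamma\searrow s/H$ to get
\[
\frac{2\gamma(d-s)}{1-\gamma}+\theta\,\frac{s-d\gamma}{1-\gamma}=\frac{2s(d-s)}{H-s}+\theta\,\frac{s(H-d)}{H-s},
\]
valid for $\theta\in[0,2s/H]$. Two presentational points: the statement writes the left side as $\fd\mu$ but it should be read as $\fs\mu$, and the displayed denominator $H-d$ in the $\theta$ coefficient should presumably be $H-s$. I would check this algebra, and pass to the limit in $\gamma$ using the fact that the bounds are continuous in $\gamma$.

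\textbf{Step 5 (the manifold).} Since Step 3 holds for every finite compactly supported $\mu$ on $X$, and the Fourier dimension of $X$ may be computed using compactly supported measures, we get $\fd X\le 2sd/H$.

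The only real subtlety, and where I expect any obstacle, is applying Corollary \ref{fourier} in the degenerate ranges ($\alpha=0$, or $\gamma$ near $1$) and justifying the limit $\gamma\searrow s/H$. Both are handled by the monotonicity and continuity noted above.
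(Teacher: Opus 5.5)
Your proposal is correct and follows the paper's route exactly: the paper obtains this corollary by feeding Gressman's $\gamma$-flatness (for every $\gamma > s/H(s,d)$) into Corollary \ref{fourier}, Corollary \ref{fourierspectrum} and Mitsis' bound (via Corollary \ref{fouriernofrost}), then letting $\gamma \searrow s/H(s,d)$. Your two corrections to the displayed spectrum bound are right (the left side should be $\fs \mu$, and the $\theta$-coefficient is $s(H(s,d)-d)/(H(s,d)-s)$), and the caveat in your Step 1 is vacuous because $H(s,d) \geq d > s$ always holds.
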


The bound for the Fourier dimension of the manifold $X$ itself is not useful when $s$ is large.  Indeed,   $H(d-1,d) = d+1$ since there are $d-1$ distinct  linear monomials and the next smallest monomial is necessarily of degree 2,  and so the bound exceeds $s = \hd X$ when $s=d-1$.  However, when $s$ is small, something interesting happens.  We first consider    curves where computing $H(s,d)$ is straightforward.

\begin{cor} \label{surfacedim2}
Suppose $X$ is a smooth curve (a 1-dimensional manifold) in $\rd$.  Then
\[
\fd X \leq \frac{4}{d+1}
\]
and so, when $d\geq 4$, $X$ is not Salem.  Moreover, the surface (arclength) measure $\mu$  on $X$ satisfies
\[
\fd \mu \leq \frac{4}{d+2}.
\]
\end{cor}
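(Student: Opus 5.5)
This follows by specialising Corollary \ref{surfacedim1} to $s=1$, once we compute $H(1,d)$. In one variable the nonconstant monomials are $t, t^2, t^3, \dots$, and these are distinct exactly when their degrees are distinct. So the smallest possible sum of the degrees of $d$ distinct nonconstant monomials is
\[
H(1,d) = 1+2+\cdots+d = \frac{d(d+1)}{2}.
\]

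For the curve itself, the final bound of Corollary \ref{surfacedim1} gives
\[
\fd X \leq \frac{2sd}{H(s,d)} = \frac{2d}{d(d+1)/2} = \frac{4}{d+1}.
\]
Since $X$ is a smooth curve, $\hd X = 1$. Hence $X$ fails to be Salem as soon as $4/(d+1)<1$, that is, when $d \geq 4$. (When $d=3$ the bound equals $1$, so there is no contradiction.)

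For the arclength measure $\mu$, I would use the first inequality of Corollary \ref{surfacedim1} with Frostman dimension $\alpha = s = 1$. This is available because arclength measure on a compact smooth piece satisfies $\mu(B(x,r)) \approx r$. The bound becomes
\[
\fd \mu \leq \frac{2s(d-\alpha)}{H(s,d)-s} = \frac{2(d-1)}{d(d+1)/2-1} = \frac{4(d-1)}{d^2+d-2}.
\]
Factoring $d^2+d-2=(d-1)(d+2)$ gives
\[
\fd \mu \leq \frac{4(d-1)}{(d-1)(d+2)} = \frac{4}{d+2}.
\]
The factorisation requires $d \geq 2$, which holds here since a $1$-dimensional submanifold needs codimension at least $1$.

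There is no real obstacle; the arithmetic is routine. Two points need care. First, $H(1,d)$ must be computed as the minimal degree sum, which is forced because the monomials must be distinct. Second, we need to justify $\alpha=1$ for the arclength measure and check that Gressman's proposition applies with $\gamma$ arbitrarily close to $s/H(s,d)$, so that the corollary's bounds hold as stated after letting $\gamma \searrow s/H$. Both are already built into Corollary \ref{surfacedim1}, and a compact piece of the curve suffices because the Fourier dimension of a measure can only be bounded above by passing to such pieces.
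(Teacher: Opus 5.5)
Your proposal is correct and follows the paper's proof. The paper also computes $H(1,d)=1+2+\cdots+d=d(d+1)/2$, using that there is one monomial per degree, and substitutes this into Corollary \ref{surfacedim1}. You additionally write out the arithmetic, including $\alpha=1$ for arclength measure giving $4/(d+2)$, and the Salem comparison with $\hd X=1$, both of which the paper leaves implicit.
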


\begin{proof}
When $s=1$, 
\[
H(1,d) = \frac{d(d+1)}{2}
\]
and the results follow from Corollary \ref{surfacedim1}.  This is because we only have 1 variable and so there is only one distinct monomial for each degree and so $H(1,d) = 1+ 2+\cdots +d$.  
\end{proof}

Next we consider higher dimensional submanifolds where estimating $H(s,d)$ becomes a little more difficult. Again we see that when $d$ is sufficiently large an $s$-dimensional manifold cannot be Salem in $\rd$ and our bound for the Fourier dimension goes to zero as $d \to \infty$.

\begin{cor} \label{surfacedim3}
Let $X$ be a smooth $s$-dimensional submanifold in $\rd$ for $s \geq 2$.  If
\[
d \geq    \frac{s(s+1)}{2} + 2s+1
\]
then
\[
\fd X  \leq  \frac{2sd}{2d+1} < s
\]
 and $X$ is not a Salem set.  Moreover,  under the same assumption on $d$, the surface  measure $\mu$  on $X$ satisfies
\[
\fd \mu \leq \frac{2s(d-s)}{2d+1-s}.
\]
Further, if we fix $s$ and consider large $d$, then
\[
\fd X  \leq  C_s d^{-1/s}
\]
where $C_s$ is a constant depending only on $s$. 
\end{cor}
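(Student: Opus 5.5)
The whole corollary should follow from Corollary \ref{surfacedim1} once we have a lower bound on the homogeneous dimension $H(s,d)$. The first two displayed bounds are just Corollary \ref{surfacedim1} with the inequality $H(s,d)\geq 2d+1$ inserted. The bound $2sd/H(s,d)$ decreases in $H$, giving $\fd X \leq 2sd/(2d+1)$. For the surface measure, $2s(d-s)/(H-s)$ also decreases in $H$, giving the stated bound $2s(d-s)/(2d+1-s)$. The strict inequality $2sd/(2d+1)<s$ is trivial, and non-Salem follows since $\hd X = s$.

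So the main step is a combinatorial lower bound for $H(s,d)$, the minimal total degree of $d$ distinct nonconstant monomials in $s$ variables. A greedy choice is optimal: choose all monomials of lowest degrees first. There are $\binom{s+j-1}{j}$ monomials of degree exactly $j$, so:
\begin{itemize}
\item the first $s$ monomials are the linear ones, contributing total degree $s$;
\item the next $s(s+1)/2$ monomials are the quadratic ones, contributing $s(s+1)$;
\item any remaining monomials have degree at least $3$.
\end{itemize}
Hence if $d\geq s+s(s+1)/2$,
\[
H(s,d)\geq s+s(s+1)+3\big(d-s-s(s+1)/2\big)=3d-2s-\tfrac{s(s+1)}{2}.
\]
This is at least $2d+1$ precisely when $d\geq s(s+1)/2+2s+1$, which is the hypothesis. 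That hypothesis also implies $d\geq s+s(s+1)/2$, so the greedy count above is legitimate. I expect this numerology check to be the only point needing care.

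For the asymptotic statement, fix $s$ and let $d\to\infty$. Let $J$ be the largest degree used by the greedy choice. The number of monomials of degree at most $J$ is $\binom{s+J}{s}-1\approx J^s/s!$, so $J\approx c_s d^{1/s}$. The total degree is then comparable to $\sum_{j\leq J} j\cdot j^{s-1}/(s-1)! \approx J^{s+1}$. More cleanly, at least half of the chosen monomials have degree $\gtrsim_s d^{1/s}$, because only $\lesssim_s m^s$ monomials have degree at most $m$; choosing $m=c_s d^{1/s}$ with $c_s$ small leaves at least $d/2$ monomials of degree greater than $m$. Therefore $H(s,d)\gtrsim_s d\cdot d^{1/s}$, and $\fd X\leq 2sd/H(s,d)\leq C_s d^{-1/s}$.
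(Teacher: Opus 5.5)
Your proposal is correct and follows essentially the paper's route. You insert $H(s,d)\geq 2d+1$ into Corollary \ref{surfacedim1}, obtaining that bound by counting the $s$ linear and $\binom{s+1}{2}$ quadratic monomials, with all remaining monomials of degree at least $3$. For large $d$ you bound $H(s,d)\gtrsim_s d^{1+1/s}$ by counting monomials of bounded degree. Your pigeonhole step (at least $d/2$ of the chosen monomials have degree $\gtrsim_s d^{1/s}$) is only a cosmetic simplification of the paper's exact evaluation of $\sum_{k<r} k\binom{k+s-1}{k}$, and yields the same bound.
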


\begin{proof}
For $s \geq 2$,  there are $s$ distinct linear monomials and 
\[
\binom{s+1}{2}
\]
many distinct monomials of degree 2.  Therefore, if 
\[
d =  s+ \binom{s+1}{2} +(s+1) +\kappa =  \frac{s(s+1)}{2} + 2s+1+\kappa,
\]
for some $\kappa \geq 0$, then
\[
H(s,d)  \geq s+ 2 \binom{s+1}{2} +3(s+1+\kappa) = 2d+1+\kappa \geq  2d+1
\]
and the explicit bounds  follow from Corollary \ref{surfacedim1}.  With $s$ fixed and $d$ large, choose  $r$ to be the unique integer such that
\[
\sum_{k=1}^{r-1}  \binom{k+s-1}{k} = \binom{r+s-1}{s}-1     \leq d <\binom{r+s}{s}-1 
\]
noting that $d \lesssim r^s$.  By a `stars and bars' count, there are 
\[
 \binom{k+s-1}{k}
\]
distinct monomials of degree $k$ in $s$ variables and so, by the definition of $r$, we must use all of the monomials of degree $<r$ when defining $H(s,d)$.  Therefore,
\[
H(s,d) \geq \sum_{k=1}^{r-1} k \binom{k+s-1}{k} = \frac{r(r-1)}{s+1}\binom{r+s-1}{r}  \gtrsim_s r^{s+1}\gtrsim d^{1+1/s}
\]
for $d$ (and $r$) large compared to $s$.  The bound 
\[
\fd X \leq \frac{2sd}{H(s,d)} \leq C_s d^{-1/s}
\]
then follows from Corollary \ref{surfacedim1}.
\end{proof}

\subsection{Fractal surfaces}

In this section we remain in the setting of surfaces, but we drop all smoothness assumptions.  We consider general $(d-k)$-dimensional topological  manifolds  which we assume are the graphs of H\"older  functions $f: [0,1]^{d-k} \to \mathbb{R}^k$ where the graph is defined by
\[
G_f= \{(x,f(x)) : x \in [0,1]^{d-k} \} \subseteq  \rd.
\]
Then we replace the surface measure by the lift of Lebesgue measure on $[0,1]^{d-k}$ to the graph.   Without derivatives, the standard Knapp style examples no longer apply and instead we prove quantitative flatness using other geometric properties of the surface including H\"older continuity and fractal dimensions.   First, we obtain a basic estimate coming directly from the H\"older exponent. We get useful information in the context of restriction and $L^p$-improving properties, but not in terms of Fourier dimension. The special case $k=1$ of this estimate was already  observed in  \cite[Theorem 2.6]{sheet}.

\begin{thm} \label{surface1}
Let $1 \leq k < d$ be integers and suppose $\mu$ is the lift of Lebesgue measure on  $[0,1]^{d-k}$ to the graph of an $\alpha$-H\"older function $f: [0,1]^{d-k} \to \mathbb{R}^k$.  Then $\mu$ is $(\gamma, \upsilon)$-flat for 
\[
\gamma = \frac{d-k}{d-k+k\alpha}
\]
and 
\[
\upsilon = d-k+k\alpha.
\]
Therefore, if the $E(p \to q)$ extension estimate \eqref{eq:extension} holds, then
\[
\frac{p}{q(p-1)} \leq \frac{d-k}{d-k+k\alpha}.
\]
In particular, if $p=2$ then 
\[
q \geq 2+ \frac{ 2k\alpha}{d-k}.
\]
Further,  if $\mu$  satisfies the $I(p \to q)$ improving estimate \eqref{improvingest}, then
\[
\frac{1}{p}-\frac{1}{q} \leq  \frac{d-k}{d-k+k\alpha}. 
\] 
\end{thm}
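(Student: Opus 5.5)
The plan is to exhibit explicit witnessing cuboids built from small cubes in the parameter domain, and then read off the conclusions from Theorem \ref{knapp} and Theorem \ref{thm:improving}.

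\textbf{Construction of the cuboids.} Fix $\delta \in (0,1)$ small and a base point $x_0 \in [0,1]^{d-k}$. Let $D \subseteq [0,1]^{d-k}$ be a cube of side $\delta$ containing $x_0$. Since $f$ is $\alpha$-H\"older, every $x \in D$ satisfies
\[
|f(x)-f(x_0)| \lesssim \delta^\alpha .
\]
So the piece of the graph $\{(x,f(x)) : x \in D\}$ lies inside the axis-parallel cuboid
\[
Q = D \times \bigl(f(x_0) + [-C\delta^\alpha, C\delta^\alpha]^k\bigr)
\]
for a suitable constant $C$. This $Q$ is a genuine cuboid with $g$ the identity: it has $d-k$ sides of length $\delta$ and $k$ sides of length $2C\delta^\alpha$.

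\textbf{Mass, volume and shortest side.} Since $\mu$ is the lift of Lebesgue measure,
\[
\mu(Q) \geq \mathcal{L}^{d-k}(D) = \delta^{d-k},
\qquad
\mathcal{L}^d(Q) \approx \delta^{d-k+k\alpha}.
\]
Hence $\mu(Q) \gtrsim \mathcal{L}^d(Q)^{\gamma}$ with $\gamma = (d-k)/(d-k+k\alpha)$. The implicit constants depend only on $d$, $k$ and the H\"older constant, not on $\delta$. Because $\alpha \leq 1$ and $\delta<1$, we have $\delta^\alpha \geq \delta$, so the shortest side is $\approx \delta$. Therefore $\mathcal{L}^d(Q) \approx \delta^{\upsilon}$ with $\upsilon = d-k+k\alpha$, and $\upsilon \in [1,d]$ as required by Definition \ref{quantflat2}. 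Taking $\delta = \delta_j \to 0$ gives a witnessing sequence with $\mathcal{L}^d(Q_j) \to 0$, so $\mu$ is $(\gamma,\upsilon)$-flat.

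\textbf{Consequences.} Theorem \ref{knapp} gives
\[
\frac{p}{q(p-1)} \leq \frac{d-k}{d-k+k\alpha}.
\]
At $p=2$ this reads $2/q \leq \gamma$, that is,
\[
q \geq \frac{2}{\gamma} = \frac{2(d-k+k\alpha)}{d-k} = 2 + \frac{2k\alpha}{d-k}.
\]
Theorem \ref{thm:improving} gives the $L^p$-improving bound with the same $\gamma$.

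There is no real obstacle. The only points to check are the uniformity of the constants in $\delta$ and the identification of the shortest side, which uses $\alpha \leq 1$. I would also remark that no Fourier dimension bound is claimed. The $m$ comparable shortest sides number $d-k$, and $\upsilon\gamma = d-k$ is not strictly less than $m=d-k$, so Theorem \ref{fourier2} does not apply.
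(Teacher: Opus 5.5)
Your proposal is correct and follows the paper's own proof: an axis-parallel cuboid with $d-k$ sides of length $\delta$ and $k$ sides of length $\approx \delta^\alpha$ contains the graph over a $\delta$-cube by the H\"older condition, and the conclusions then come from Theorems \ref{knapp} and \ref{thm:improving}. Your version is slightly more careful than the paper's: choosing $D \subseteq [0,1]^{d-k}$ avoids the boundary issue hidden in the paper's ``centred at a point on the graph'', and you check explicitly that the shortest side is $\approx\delta$ and that $\upsilon\in[1,d]$.
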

\begin{proof}
Let $C \geq 1$ be the H\"older constant of  $f$ and let $Q$ be any axes oriented cuboid with side lengths 
\[
\underbrace{\delta \times \cdots \times \delta }_{\text{$(d-k)$ times}} \times \underbrace{2C\delta^\alpha \times \cdots \times 2C\delta^\alpha}_{\text{$k$ times}}
\]
 centred at a point on the graph for some $\delta>0$.  Then 
\[
\mathcal{L}^{d}(Q) = (2C)^{k} \delta^{d-k+k\alpha}
\]
and 
\[
\mu(Q)  = \delta^{d-k}
\]
since the graph restricted to the projection of $Q$ onto $[0,1]^{d-k}$ is entirely contained in $Q$. This follows from the H\"older condition. Then
\[
\mu(Q) \approx \mathcal{L}^{d}(Q)^{\frac{d-k}{d-k+k\alpha}}
\]
and the claimed $(\gamma, \upsilon)$-flatness follows.  The remaining results  then follow from Theorems \ref{knapp} and \ref{thm:improving}.
\end{proof}

Our next goal is to beat the previous estimate by replacing the `vertical cuboids' from the previous result with `horizontal cuboids' which take advantage of fractal features of the surface.  We can obtain such fractal information by considering the Assouad spectrum of `level sets' $f^{-1}(z)$, for $z \in \mathbb{R}^k$ in the range.  For complicated functions $f^{-1}(z) \subseteq [0,1]^{d-k}$ is often a fractal set and may have non-trivial dimensional information.  In particular, large level sets coupled with the H\"older condition guarantee lots of mass is trapped in a relatively flat  cuboid. We assume $\theta \in [0,\alpha]$ to force the cuboids we use in the proof to be horizontally flat and so genuinely distinct from the vertical cuboids considered in Theorem \ref{surface1}. Otherwise Theorem \ref{surface1} will be better, because the H\"older condition traps the whole surface in the cuboid without appealing to dimension theory of level sets.

\begin{thm} \label{surface2}
Suppose $\mu$ is Lebesgue measure lifted to the graph of an $\alpha$-H\"older function $f: [0,1]^{d-k} \to \mathbb{R}^k$.  Then, for all $z \in \mathbb{R}^k$ and $\theta \in [0,\alpha]$, $\mu$ is $(\gamma, \upsilon)$-flat for all
\[
\gamma > \frac{d-k-\as f^{-1}(z) (1-\theta)}{(d-k)\theta+k\alpha}
\]
and
\[
\upsilon = \frac{(d-k)\theta}{\alpha}+k.
\]
Therefore:
\begin{enumerate}
\item If $\mu$ satisfies the $E(p \to q )$  extension estimate \eqref{eq:extension}, then
\[
\frac{p}{q(p-1)} \leq \frac{d-k-\as f^{-1}(z) (1-\theta)}{(d-k)\theta+k\alpha}
\]
for all $\theta \in [0,\alpha]$. In particular, if $p=2$ then 
\[
q \geq \frac{2((d-k)\theta+k\alpha)}{d-k-\as f^{-1}(z) (1-\theta)}.
\]
Setting $\theta=0$ we get that for all $z \in \mathbb{R}$
\[
\frac{p}{q(p-1)} \leq \frac{d-k-\ubd f^{-1}(z) }{k\alpha}
\]
and, for $p=2$, 
\[
q \geq \frac{2k\alpha}{d-k-\ubd f^{-1}(z)}.
\]
\item If $\mu$  satisfies the $I(p \to q)$ improving estimate \eqref{improvingest}, then
\[
\frac{1}{p}-\frac{1}{q} \leq  \frac{d-k-\as f^{-1}(z) (1-\theta)}{(d-k)\theta+k\alpha}. 
\] 
\item The estimate
\[
\fd \mu \leq  2k \, \frac{d-k-\as f^{-1}(z) (1-\theta)}{(\as f^{-1}(z) -d+k)(1-\theta)+k\alpha}
\]
holds for all $\theta \in [0,\alpha]$. In particular, setting $\theta=0$ we get
\[
\fd \mu \leq 2k \, \frac{d-k-\ubd f^{-1}(z) }{\ubd f^{-1}(z) -d+k+k\alpha}.
\]
Further, provided 
\[
\as f^{-1}(z) > \frac{d-k-k\alpha}{1-\theta},
\]
then the estimate
\[
\fd \mu \leq  2 \, \frac{d-k-\as f^{-1}(z) (1-\theta)}{\alpha}
\]
holds for all $\theta \in [0,\alpha]$. In particular, setting $\theta=0$ we get
\[
\fd \mu \leq  2 \, \frac{d-k-\ubd f^{-1}(z)}{\alpha}
\]
provided $\ubd f^{-1}(z) >d-k-k\alpha$.
\end{enumerate}
In the extreme case when there exists $z$ such that $\ubd f^{-1}(z)=d-k$ (or even approaches $d-k$) then there are no non-trivial Fourier restriction or $L^p$-improving  estimates at all, and $\mu$ is not $L^2$-flattening.  Moreover, $\fd \mu = 0$, the Frostman and Sobolev dimensions of $\mu$ are at most $d-1/2$ and 
\[
\fs \mu \leq  (d-1/2) \, \theta 
\]
for all $0 \leq \theta \leq 1$.
\end{thm}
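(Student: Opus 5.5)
The plan is to build one family of ``horizontal'' cuboids from the Assouad spectrum of a level set. Items (1)--(3) and the extreme case then follow from the abstract results of Section 2.

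\textbf{Step 1: the cuboids.} Fix $z$, $\theta\in(0,\alpha]$ and $0<t<\as f^{-1}(z)$; the case $\theta=0$ is the same with $\ubd f^{-1}(z)$ in place of $\as f^{-1}(z)$ and a single ball of radius $\approx 1$.
\begin{itemize}
\item Since $t$ is below the Assouad spectrum, no uniform constant works in its definition. So there are $r_k\to 0$ and $x_k\in f^{-1}(z)$ with
\[
N_{r_k}\big(B(x_k,r_k^\theta)\cap f^{-1}(z)\big)\gtrsim (r_k^{\theta}/r_k)^{t}.
\]
Here $r_k\to0$ can be forced: for $r$ bounded away from $0$ the covering numbers are uniformly bounded and can be absorbed into the constant.
\item Pass from the covering number to a maximal $r_k$-separated subset of $B(x_k,r_k^\theta)\cap f^{-1}(z)$ of comparable size. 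Around each of its points place a cube of side $\approx r_k$ in $[0,1]^{d-k}$, shrunk by a fixed factor so the cubes are disjoint.
\item On each such cube the $\alpha$-H\"older condition keeps $f$ within $Cr_k^\alpha$ of $z$.
\item Hence the graph over the union of these cubes lies in the axis-parallel cuboid $Q_k$ centred at $(x_k,z)$. This cuboid has $d-k$ sides of length $\approx r_k^\theta$ and $k$ vertical sides of length $2Cr_k^\alpha$.
\end{itemize}

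\textbf{Step 2: mass, volume and flatness parameters.} Since $\mu$ is lifted Lebesgue measure and the cubes are disjoint,
\[
\mu(Q_k)\gtrsim r_k^{-t(1-\theta)}\,r_k^{d-k}, \qquad \mathcal{L}^d(Q_k)\approx r_k^{(d-k)\theta+k\alpha}.
\]
This gives the stated $\gamma$ after letting $t\nearrow \as f^{-1}(z)$. Because $\theta\le\alpha$, the shortest side is $\delta_k\approx r_k^\alpha$, so
\[
\mathcal{L}^d(Q_k)\approx \delta_k^{(d-k)\theta/\alpha+k},
\]
which is the stated $\upsilon$.

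\textbf{Step 3: items (1) and (2).} These follow directly from Theorem \ref{knapp} and Theorem \ref{thm:improving}.

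\textbf{Step 4: item (3).}
\begin{itemize}
\item The lifted Lebesgue measure has Frostman dimension $d-k$: a ball of radius $\rho$ projects into a $\rho$-ball in the domain, and conversely small domain cubes lift to sets of small diameter.
\item Corollary \ref{fourier} with $\alpha$ replaced by $d-k$ gives $2k\gamma/(1-\gamma)$. The algebra uses
\[
1-\gamma=\frac{(\as f^{-1}(z)-d+k)(1-\theta)+k\alpha}{(d-k)\theta+k\alpha}.
\]
\item For the second bound, the $k$ shortest sides of $Q_k$ are all equal to $2Cr_k^\alpha$. So Theorem \ref{fourier2} applies with $m=k$ provided $\upsilon\gamma<k$.
\item Here $\upsilon\gamma=(d-k-\as f^{-1}(z)(1-\theta))/\alpha$ (up to the $t$-limit). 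The condition $\upsilon\gamma<k$ is exactly $\as f^{-1}(z)>(d-k-k\alpha)/(1-\theta)$, and the bound obtained is $2\upsilon\gamma$.
\end{itemize}

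\textbf{Step 5: the extreme case.} If $\ubd f^{-1}(z)$ equals or approaches $d-k$, the $\theta=0$ value of $\gamma$ tends to $0$. Then $\mu$ is $\gamma$-flat for every $\gamma>0$, and:
\begin{itemize}
\item Theorem \ref{knapp} and Corollary \ref{noimpro} rule out nontrivial restriction and $L^p$-improving estimates;
\item the $\theta=0$ bound in (3) gives $\fd\mu=0$;
\item Theorem \ref{flattening} gives the $(d-1/2)\theta$ bound on the spectrum and the Frostman/Sobolev bounds, together with the failure of $L^2$-flattening.
\end{itemize}

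\textbf{Main obstacle.} The delicate step is Step 1: extracting from the definition of the Assouad spectrum a sequence with $r_k\to0$, and making the lower bound on $\mu(Q_k)$ rigorous. The latter needs separated points rather than covers, so that the small cubes are disjoint and each carries full $\mathcal{L}^{d-k}$ mass $\approx r_k^{d-k}$. The rest is bookkeeping with earlier results.
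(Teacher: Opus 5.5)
Your proposal is correct and is essentially the paper's proof. The paper builds the same horizontal cuboids, with $d-k$ sides $\approx r^\theta$ and $k$ sides $\approx r^\alpha$, around an $r$-separated subset of the level set inside an $r^\theta$-window; it then uses Theorems \ref{knapp} and \ref{thm:improving} for (1)--(2), Corollary \ref{fourier} with the Frostman lower bound $d-k$ plus the $m=k$ projection argument for (3), and Theorem \ref{flattening} for the extreme case.

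Two small remarks. First, Theorem \ref{fourier2} with $m=k$ is the right tool for the conditional bound: the paper's text cites Theorem \ref{fourier3}, but the hypothesis it checks, $\upsilon\gamma<k$, is that of Theorem \ref{fourier2}. Second, only $\frd\mu\geq d-k$ is needed in Step 4, which is fortunate, because your ``conversely'' step does not give $\frd\mu\leq d-k$ for merely H\"older $f$: a domain cube of side $\rho$ lifts to a set of diameter only $\lesssim\rho^\alpha$, which yields just $\frd\mu\leq(d-k)/\alpha$.
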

\begin{proof}
Let $z \in \mathbb{R}^k$ and let
\[
s<\as f^{-1}(z) \leq d-k.
\]
By definition there exists a sequence of scales $r \to 0$ and associated  cubes $Q' \subseteq [0,1]^{d-k}$ with sidelengths $r^\theta$ such that we may find an $r$-separated subset $X$ of $f^{-1}(z) \cap Q'$ of size at least $\gtrsim r^{s(\theta-1)}$.  Let $Q$ be the cuboid given by
\[
Q = Q' \times B(z, C r^\alpha).
\]
Then $Q$ has $(d-k)$ many `long' sides of length $\approx r^\theta$ and $k$ many `short' sides of length $\approx r^\alpha$ and so
\[
\mathcal{L}^{d}(Q) \approx  r^{(d-k)\theta+k\alpha} = \big(  r^\alpha \big)^{\frac{(d-k)\theta}{\alpha}+k}.
\]
Moreover, the graph of $f$ restricted to an $r$-ball centred at each point of $X$ lies completely inside $Q$ by the H\"older property and, moreover, these restricted graphs are pairwise disjoint and are of $\mu$ measure $\approx r^{d-k}$. Therefore
\[
\mu(Q)  \gtrsim r^{s(\theta-1)} r^{d-k} \approx  \mathcal{L}^{d}(Q)^{\frac{s(\theta-1)+d-k}{(d-k)\theta+k\alpha}}.
\]
The claimed $(\gamma,\upsilon)$-flatness follows.  The results in (1) and (2) then follow from Theorems \ref{knapp} and \ref{thm:improving}, respectively. Turning our attention to the Fourier dimension and (3), first observe that the Frostman dimension of $\mu$ is at least $d-k$ due to the fact that
\[
\mu(B(x,r)) \leq \mathcal{L}^{d-k}(\pi(B(x,r))) \approx r^{d-k}
\] 
for all $x$ and $r$ and where $\pi$ denotes orthogonal projection onto the domain. Then Corollary \ref{fourier} gives
\[
\fd \mu \leq \frac{2 \gamma}{1-\gamma} (d-(d-k)) = \frac{2 k\gamma}{1-\gamma}   
\]
and plugging in $\gamma$ and simplifying gives the desired bound.  The final claim in (3) follows from Theorem \ref{fourier3}.  To be in a position to apply this estimate we need $\gamma \upsilon<k$ since the $k$ shortest sides of $Q$ above are $\approx r^\alpha$.  This can be achieved under the additional assumption.

Finally, the results in the extreme case follow from the previous estimates together with Theorem \ref{flattening}.
\end{proof}

Theorem \ref{surface2} already beats Theorem \ref{surface1} in the $\theta=0$ case whenever there is a level set with upper box dimension
\[
> \frac{(d-k)^2}{d-k+k\alpha}
\] 
and it can beat it with even less restrictive bounds on the Assouad spectrum. The Fourier dimension bound is also interesting.  In this setting 
\[
\fd \mu \leq \fd \mathcal{L}^{d-k}\vert_{[0,1]^{d-k}} = 2
\]
always holds since projection cannot decrease Fourier dimension.  Therefore we are interested in conditions which beat this estimate.  At $\theta=0$, this will be achieved   whenever 
\[
\ubd f^{-1}(z) >d-k-\frac{k\alpha}{k+1}
\]
for some $z$, which \emph{can} be satisfied since the level sets live in $(d-k)$-dimensional space. More generally, one may derive a condition in terms of the Assouad spectrum and which $\theta$ is optimal is not pre-determined. That said, the bound at $\theta=\alpha$ is never better than 2 and so one looks for intermediate $\theta$s.

Concerning the second (conditional) Fourier dimension bound, note that it is always better than the first when it applies.  Moreover, when $k \geq d/(1+\alpha)$ it always applies.  

\begin{example}
Let $\mu$ be Lebesgue measure lifted onto the graph of the function $f:(0,1] \to \mathbb{R}$ defined by
\[
f(x) = 2^{-\alpha/x} \sin(2 \pi 2^{1/x}).
\]
for some $\alpha \in (0,1)$.  Then there are no non-trivial Fourier restriction or $L^p$-improving estimates for $\mu$ and $\mu$ is not $L^2$-flattening.  Moreover, $\fd \mu = 0$, the Frostman and Sobolev dimensions of $\mu$ are at most $3/2$ and 
\[
\fs \mu \leq  3\theta/2 
\]
for all $0 \leq \theta \leq 1$.

This follows from Theorem \ref{surface2} since $f$ is a  H\"older function and there is a level set with box dimension 1, namely the zero set of the function.
\end{example}

\begin{example}
Let $\mu$ be Lebesgue measure lifted onto the graph of the function $f:(0,1] \to \mathbb{R}$ defined by
\[
f(x) = x^{1/\alpha} \sin(2 \pi x^{-1/\beta}).
\]
for some $\alpha \in (0,1)$ and $\beta>0$ such that $\frac{\beta}{\beta+1} \leq \alpha$.  Then $f$ is $\alpha'$-H\"older for all $\alpha'<\frac{\beta}{\alpha(\beta+1)}$ and there is a level set, namely, the zero set, which has Assouad spectrum
\[
\as f^{-1}(0) = \min\left\{\frac{1}{(1-\theta)(1+\beta)}, \, 1 \right\}
\]
see \cite{assouadspectrum}. Therefore, Theorem \ref{surface2} yields that:
\begin{enumerate}
\item If $\mu$ satisfies the $E(p \to q )$  extension estimate \eqref{eq:extension}, then
\[
\frac{p}{q(p-1)} \leq \frac{1-\min\left\{\frac{1}{1+\beta}, \, 1-\theta \right\}}{\theta+\frac{\beta}{\alpha(\beta+1)}}
\]
and this is optimised at the phase transition $\theta=\beta/(1+\beta)$ to give 
\[
\frac{p}{q(p-1)} \leq \frac{\alpha}{\alpha+1}
\]
and, in the case $p=2$,
\[
q \geq 2+\frac{2}{\alpha}.
\]
\item Similarly, if $\mu$  satisfies the $I(p \to q)$ improving estimate \eqref{improvingest}, then
\[
\frac{1}{p}-\frac{1}{q} \leq  \frac{\alpha}{\alpha+1}. 
\] 
\item The estimate
\[
\fd \mu \leq  \frac{2-2\min\left\{\frac{1}{(1-\theta)(1+\beta)}, \, 1 \right\} (1-\theta)}{(\min\left\{\frac{1}{(1-\theta)(1+\beta)}, \, 1 \right\}-1)(1-\theta)+\frac{\beta}{\alpha(\beta+1)}}
\]
holds. Again this is optimised at the phase transition $\theta=\beta/(1+\beta)$ to give 
\[
\fd \mu \leq  2\alpha.
\]
\end{enumerate}
\end{example}

Next we apply the converse of Theorem \ref{surface2} to  get uniform information about the box dimension of the level sets of the graphs of stochastic processes. These estimates are   non-optimal, but at least give non-trivial estimates via a rather different approach.

\begin{cor}
Almost surely \emph{all} of the  level sets of the graph of fractional Brownian motion with Hurst parameter $H \in (0,1)$  on $[0,1]$  have upper box dimension bounded above by $1-H/3$.  
\end{cor}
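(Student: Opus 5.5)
The plan is to argue by contraposition using the $\theta=0$ case of part (3) of Theorem~\ref{surface2}. That result is deterministic: it applies to every $\alpha$-H\"older function and to \emph{every} level $z$. So a single almost sure event controlling the Fourier dimension of $\mu$ will give the bound for all level sets at once.

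Concretely, let $B_H$ be fractional Brownian motion on $[0,1]$ and let $\mu$ be Lebesgue measure on $[0,1]$ lifted to the graph, so $d=2$ and $k=1$. I will use two almost sure facts.
\begin{enumerate}
\item[(a)] $B_H$ is $\alpha$-H\"older for every $\alpha<H$; this is the standard Kolmogorov continuity argument.
\item[(b)] $\fd \mu \geq 1$. This is a lower bound for the Fourier dimension of the graph measure. I would cite the known Fourier decay results for graphs of Brownian motion (Fraser--Orponen--Sahlsten) and their extension to fractional Brownian motion, or prove it by a second-moment computation of $\widehat{\mu}(\xi)$ using the Gaussian characteristic function of the increments.
\end{enumerate}
I intersect these countably many full-measure events, taking rational $\alpha<H$ in (a).

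On this event, fix an arbitrary $z\in\mathbb{R}$ and write $b=\ubd B_H^{-1}(z)\leq 1$. If $b\leq 1-\alpha$, then $b\leq 1-H/3$ immediately once $\alpha$ is close to $H$, so there is nothing to prove. Otherwise the denominator $b-1+\alpha$ is positive, and the $\theta=0$ bound in Theorem~\ref{surface2}(3) together with (b) gives
\[
1\leq \fd\mu \leq \frac{2(1-b)}{b-1+\alpha}.
\]
Rearranging gives $(b-1+\alpha)\leq 2(1-b)$, that is, $b\leq 1-\alpha/3$. Letting $\alpha\nearrow H$ along rationals yields $b\leq 1-H/3$, and $z$ was arbitrary. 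In general, a lower bound $\fd\mu\geq F$ gives $b\leq 1-F\alpha/(F+2)$; the case $F=1$ is exactly the stated exponent $H/3$.

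The main obstacle is step (b), the almost sure lower bound $\fd\mu\geq 1$ for the fBm graph measure. It must hold simultaneously for one realisation of $\mu$, and it is where all the probabilistic input lies. If only a weaker lower bound $F$ were available, the same argument would give the weaker exponent $F H/(F+2)$.

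The conditional estimate in Theorem~\ref{surface2}(3), namely $\fd\mu\leq 2(1-b)/\alpha$ when $b>1-\alpha$, could also be used here. Combined with $\fd\mu\geq 1$ it would even give $b\leq 1-H/2$, which is stronger than the statement. That estimate rests on Theorem~\ref{fourier3}, so I would check its hypotheses carefully before relying on it. The cruder bound above is enough for the claim as stated.
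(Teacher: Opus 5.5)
Your argument is correct and is essentially the paper's proof: almost sure H\"older continuity, the known fact that the lifted Lebesgue measure $\mu$ on the fBm graph has $\fd\mu = 1$, and the $\theta=0$ Frostman-based bound in Theorem~\ref{surface2}(3), rearranged to $b\leq 1-\alpha/3$. Your case split at $b\leq 1-\alpha$, which is exactly where $\gamma<1$ is needed for Corollary~\ref{fourier}, is a detail the paper leaves implicit, and your closing aside is also sound: the conditional bound is really Theorem~\ref{fourier2} with $m=k=1$ (project onto the vertical axis and apply Mitsis' bound to the occupation measure), so with $\fd\mu\geq 1$ it does give the stronger $b\leq 1-H/2$.
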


\begin{proof}
It is well-known that factional Brownian motion is $\alpha$-H\"older for all $\alpha<H$ and we know from \cite{frasersahlsten, fractional1,fractional2} that the Fourier dimension of the graph  of   fractional Brownian is 1 (independent of $H$).  Moreover, the measure witnessing this fact is the pushforward of Lebesgue measure onto the graph.  Therefore, there cannot exist a level set with upper box  dimension exceeding $1-H/3$ because otherwise the ($k=1, d=2$) bound from Theorem \ref{surface2} would give an upper bound strictly less than 1 for the Fourier dimension of the pushforward of Lebesgue measure onto the graph.
\end{proof}

It is well-known that almost surely for Lebesgue positively many $z$ the level sets of classical Brownian motion  have Hausdorff dimension $1-H$.  The fact that we obtain $1-H/3$  as an upper bound for \emph{all} level sets and with Hausdorff dimension replaced by the \emph{a priori}  larger upper box dimension is perhaps surprising to the non-expert.  That said, it is known that $1-H$ is, in fact, also an upper bound for the upper box dimension of all level sets, but this requires non-trivial ideas from stochastic analysis to prove, see \cite{daw, xiao}.

\subsection{Tube null sets}

A set $X$ in $\mathbb{R}^2$ is called tube null if it can be covered efficiently by tubes. The property of being tube null is quite subtle and relates to several different problems, see \cite{aleksi, carbery, mattilaFourier, shmerkinsuomala}. Here  a \emph{tube} is the $\delta$-neighbourhood of a line and the width of the tube is $\delta$.  More precisely,  $X$ is \emph{tube null} if for all $\eps>0$ there exists a  countable collection of tubes $T_i$ of width $|T_i|$ such that 
\[
\sum_i |T_i| < \eps.
\]
A tube null set must have Lebesgue measure zero but it can have Hausdorff dimension 2. Any set with Hausdorff dimension $<1$ is tube null.  More generally, for $1 \leq k<d$ integers and $s \in (0,1]$, we say a set $X \subseteq \rd$ is \emph{$(s,k)$-tube null} if for all $\eps>0$ there exists a countable collection of affine subspaces $\{L^i\}_i$ of dimension $k$ and a collection of corresponding widths $\delta_i$ such that 
\[
X \subseteq \bigcup_i (L^i)_{\delta_i}
\]
and such that
\[
\sum_i \delta_i^{s(d-k)} < \eps.
\]
The traditional definition of a set in $\rd$ being tube null now coincides with it being $(1,1)$-tube null and being $(s,1)$-tube null with $s<1$ is a strictly stronger condition meaning that the collection of tubes is `even smaller'.  We are unaware of this refined notion (with $s<1$) appearing in the literature but it is so natural that we suspect it  has been considered before. We first prove some basic estimates concerning $(s,k)$-tube null sets.
\begin{thm}\label{basictube}Let $1 \leq k< d$ and $s \in (0,1]$. 
\begin{enumerate}
\item If $X$ is an  $(s,k)$-tube null set in $\mathbb{R}^d$, then
\[
\hd X \leq k+s(d-k).
\]
\item  If $\hd X <s(d-k)$ then it is $(s,k)$-tube null.
\item There exist  $(s,k)$-tube null sets $X$  in $\mathbb{R}^d$ with $\hd X = k+s(d-k)$.
\end{enumerate}
\end{thm}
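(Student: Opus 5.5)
For (1) I will estimate Hausdorff measure directly. Fix $t>k+s(d-k)$ and $\eps>0$, and take a cover of $X$ by neighbourhoods $(L^i)_{\delta_i}$ with $\sum_i \delta_i^{s(d-k)}<\eps$. We may assume $X$ is bounded, since Hausdorff dimension is countably stable and a subset of a tube null set is tube null; so each piece $(L^i)_{\delta_i}\cap B(0,R)$ is essentially a $\delta_i \times \cdots \times \delta_i \times 1\times\cdots\times 1$ slab with $k$ long sides. Such a slab can be covered by $\lesssim \delta_i^{-k}$ balls of diameter $\delta_i$. Hence
\[
\mathcal{H}^t_\infty(X) \lesssim \sum_i \delta_i^{-k}\delta_i^{t} = \sum_i \delta_i^{t-k} \leq \sum_i \delta_i^{s(d-k)} < \eps,
\]
using $t-k> s(d-k)$ and that the $\delta_i\le 1$ (they are forced small by the sum). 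Note this also gives $\max_i\delta_i\to0$, so we in fact control $\mathcal{H}^t_\eta$ for small $\eta$. Therefore $\mathcal{H}^t(X)=0$, and letting $t\searrow k+s(d-k)$ gives $\hd X\leq k+s(d-k)$.

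For (2), suppose $\hd X<s(d-k)$. Then $\mathcal{H}^{s(d-k)}(X)=0$, so for every $\eps>0$ there is a cover of $X$ by sets $U_i$ with $\sum_i |U_i|^{s(d-k)}<\eps$. Each $U_i$ lies in a ball of radius $|U_i|$. That ball is contained in the $|U_i|$-neighbourhood of any $k$-dimensional affine subspace through its centre. Taking $\delta_i=|U_i|$ gives the required cover, so $X$ is $(s,k)$-tube null.

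For (3), I will take $X=E\times[0,1]^k$, where $E\subseteq\mathbb{R}^{d-k}$ is a compact set with $\hd E = \ubd E = s(d-k)$ but $\mathcal{H}^{s(d-k)}(E)=0$, or at least with $\mathcal{H}^{s(d-k)}$-content covers that are arbitrarily cheap. One candidate is a self-similar set of dimension $s(d-k)$ with a logarithmic correction, chosen so that $N_\delta(E)\,\delta^{s(d-k)}\to0$ along a sequence of scales. If $E$ can be covered by $N_\delta(E)$ balls of radius $\delta$ with $N_\delta(E)\delta^{s(d-k)}\to 0$, then each ball $B$ gives a $k$-plane neighbourhood $(B\text{-centre}\times\mathbb{R}^k)_{\delta}\supseteq B\times[0,1]^k$. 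This yields an $(s,k)$-tube null cover of $X$. The product formula then gives $\hd X = \hd E + k = k+s(d-k)$; the lower bound follows from the standard product inequality $\hd(E\times F)\geq \hd E+\hd F$. When $s=1$ we need $E\subseteq\mathbb{R}^{d-k}$ to have full dimension but zero Lebesgue measure with $N_\delta(E)\delta^{d-k}\to0$, which is exactly Lebesgue null, and this is fine.

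The main obstacle is constructing $E$ with $\hd E=s(d-k)$ yet $\liminf_{\delta\to0} N_\delta(E)\delta^{s(d-k)}=0$. I would first try a Cantor construction with stage-$n$ contraction ratios $r_n$ and $m_n$ pieces, choosing $m_n$ so that $\prod m_j \cdot (\prod r_j)^{s(d-k)}\to0$ slowly while $\log\prod m_j/(-\log\prod r_j)\to s(d-k)$. The standard mass distribution argument then gives the lower bound on Hausdorff dimension.
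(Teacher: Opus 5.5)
Your proposal is correct and follows the paper's proof in all three parts. Part (1) is the same slab-covering estimate for Hausdorff content. Part (2) uses the same idea of thickening a cheap $s(d-k)$-dimensional Hausdorff cover into neighbourhoods of $k$-planes; you cover $X$ directly rather than its projection onto a $(d-k)$-plane, which is equally valid. Part (3) uses the same product $E\times[0,1]^k$.

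One remark on (3). The paper only asks that $\hd E = s(d-k)$ and $\mathcal{H}^{s(d-k)}(E)=0$, and your tube-covering argument works verbatim for arbitrary covers $\{U_i\}$ of $E$. So the box-counting condition $\liminf_{\delta\to0}N_\delta(E)\,\delta^{s(d-k)}=0$ that you impose, and the Cantor construction you flag as the main obstacle (which does work), are not needed. Such an $E$ is obtained at once as a countable union of sets $E_n\subseteq\mathbb{R}^{d-k}$ with $\hd E_n = s(d-k)-1/n$.
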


\begin{proof}
\begin{enumerate}
\item Let $\eps>0$.  Without loss of generality assume $X$ is bounded.  If it is not, then decompose it as a countable union of bounded sets.  Since $X$ is    $(s,k)$-tube null, then there exists a countable collection of affine subspaces $\{L^i\}_i$ of dimension $k$ and a collection of corresponding widths $\delta_i$ such that $\{ (L^i)_{\delta_i}\}_i$ is a cover of $X$ and such that
\[
\sum_i \delta_i^{s(d-k)} < \eps.
\]
Then, each $ (L^i)_{\delta_i} \cap X$ may be covered by $\approx \delta_i^{-k}$ sets of diameter $\delta_i$.  This yields a cover of $X$ and we may bound the Hausdorff content of $X$ by
\[
\mathcal{H}^{s(d-k)+k}_\infty(X) \lesssim \sum_i \delta_i^{-k} \delta_i^{s(d-k)+k} = \sum_i   \delta_i^{s(d-k)} < \eps
\]
and therefore
\[
\hd X \leq k+s(d-k).
\]
\item   Let $V \in G(d,d-k)$ be an arbitrary $d-k$ dimensional subspace of $\rd$ and write $\pi_V$ for orthogonal projection onto $V$ and $V^\perp$ for the orthogonal complement of $V$.  Since $\hd X <s(d-k) \leq d-k$, $\mathcal{H}_\infty^{s(d-k)}(\pi_V(X)) = 0$.  Let $\eps>0$ and $\{U_i\}_i$ be a cover of $\pi_V(X)$ such that
\[
\sum_i |U_i|^{s(d-k)} \leq \eps.
\]
Then $V^\perp \times U_i$ is contained in a `tube of width'  $|U_i|$ and the collection of such tubes covers $X$.  The $(s,k)$-tube nullity of $X$ follows.
\item Let $E \subseteq \mathbb{R}^{d-k}$ be a set with $\hd E = s(d-k)$ but $\mathcal{H}_\infty^{s(d-k)}(E) = 0$.  Let $X = [0,1]^k \times E$.  Immediately we get $\hd X = k+s(d-k)$.  Let $\eps>0$ and $\{U_i\}_i$ be a cover of $E$ such that
\[
\sum_i |U_i|^{s(d-k)} \leq \eps.
\]
Then $[0,1]^k \times U_i$ is contained in a tube of width  $|U_i|$ and the collection of such tubes covers $X$.  The $(s,k)$-tube nullity of $X$ follows.
\end{enumerate}
\end{proof}

Tube nullity forces large amounts of the set to be contained in tubes, which are (infinite extensions of) relatively flat cuboids.  We can therefore transfer information about tube nullity into our quantitative flatness framework and derive  information useful for Fourier restriction and dimension estimates.

\begin{thm} \label{tubenull}
Let $\mu$ be a compactly supported  measure  on a  $(s,k)$-tube null set $X$ in $\mathbb{R}^d$ and assume $s<1$. Then $\mu$ is $(s, d-k)$-flat. Therefore:
\begin{enumerate}
 \item If  the $E(p\to q)$ extension estimate 
\eqref{eq:extension} holds for $p \geq 1$ and $q \geq 2$, then
\[
\frac{p}{q(p-1)} \leq s
\]
In particular, if $p=2$, then 
\[
q \geq \frac{2}{s}.
\]
\item If $\mu$  satisfies the $I(p \to q)$ improving estimate \eqref{improvingest}, then
\[
\frac{1}{p}-\frac{1}{q} \leq  s. 
\] 

\item The bounds
\[
 \fd \mu \leq \fd X \leq   2 s (d-k)
\]
hold.
\end{enumerate}
\end{thm}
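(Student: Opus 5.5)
The plan is to establish the $(s,d-k)$-flatness by pigeonholing over the tube cover, and then read off (1)--(3) from the general results of the previous section.

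Fix $\mu$ on $X$; without loss of generality $X \subseteq \mathrm{supp}\,\mu$ is contained in a fixed ball $B$ and $\mu(X)=\|\mu\|>0$. Given $\eps>0$, tube nullity gives $k$-planes $L^i$ and widths $\delta_i$ with $X\subseteq\bigcup_i (L^i)_{\delta_i}$ and $\sum_i \delta_i^{s(d-k)}<\eps$. By countable subadditivity, $\sum_i \mu((L^i)_{\delta_i}) \geq \|\mu\|$. Comparing with $\sum_i \delta_i^{s(d-k)}<\eps$, there must be an index $i$ with
\[
\mu((L^i)_{\delta_i}) \geq \frac{\|\mu\|}{\eps}\, \delta_i^{s(d-k)};
\]
otherwise summing would give $\|\mu\|<\|\mu\|$. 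In particular $\mu((L^i)_{\delta_i}) \gtrsim \delta_i^{s(d-k)}$ with constant independent of $\eps$ (indeed, improving as $\eps\to 0$). Since $\mu$ is finite, this forces $\delta_i \leq (\eps)^{1/(s(d-k))}$, so letting $\eps\to 0$ produces a sequence with $\delta_i\to 0$.

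Next, intersect each selected slab with $B$. The set $(L^i)_{\delta_i}\cap B$ lies in a cuboid $Q$ with $k$ sides of length $\approx 1$ (along $L^i$, of length $\operatorname{diam} B$) and $d-k$ sides of length $\approx \delta_i$. Thus $\mathcal{L}^d(Q)\approx \delta_i^{d-k}$, and
\[
\mu(Q)\gtrsim \delta_i^{s(d-k)} \approx \mathcal{L}^d(Q)^s ,
\]
while the shortest side is $\approx\delta_i$ with $\mathcal{L}^d(Q)\approx \delta_i^{d-k}$. This gives $(s,d-k)$-flatness.

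Parts (1) and (2) then follow directly from Theorem \ref{knapp} and Theorem \ref{thm:improving}. For (3), I would apply Theorem \ref{fourier2} with $m=d-k$: the $d-k$ shortest sides are all comparable to $\delta_i$, and $\upsilon\gamma = s(d-k) < d-k = m$ precisely because $s<1$. This yields $\fd\mu\leq 2s(d-k)$. Since every measure on $X$ is again supported on an $(s,k)$-tube null set, taking the supremum gives $\fd X\leq 2s(d-k)$. The inequality $\fd\mu\leq\fd X$ is immediate from the definition.

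The only delicate point is the pigeonholing step. One must make sure the implicit constant is uniform in $\eps$ and that $\delta_i\to 0$, both of which follow from the displayed inequality as above. There is also a minor boundedness issue, handled by restricting to a ball containing the compact support.
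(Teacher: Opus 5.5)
Your proof is correct and matches the paper's argument step for step. You pigeonhole the tube cover against $\sum_i \delta_i^{s(d-k)}<\eps$ to find a truncated slab carrying $\mu$-mass $\gtrsim \delta_i^{s(d-k)}$, enclose it in a cuboid with $d-k$ sides $\approx \delta_i$ and $k$ sides $\approx 1$, and then apply Theorems \ref{knapp} and \ref{thm:improving} together with the partial-isotropy Fourier bound. For part (3), citing Theorem \ref{fourier2} with $m=d-k$ (using $\upsilon\gamma=s(d-k)<d-k$) is exactly right. The paper's proof instead cites Theorem \ref{fourier3}, which as stated would only give $\fd\mu\leq 2(d-k)$, but the condition it checks is precisely the hypothesis of Theorem \ref{fourier2}.
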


\begin{proof}
Let $\eps>0$ and  $\{L^i\}_i$,     $\delta_i$ witness the tube nullity of $X$ with respect to $\eps$.  Assume without loss of generality that $X \subseteq B(0,1)$.  Let $Q_i = (L^i)_{\delta_i} \cap B(0,1)$.  Then
\[
\mathcal{L}^d(Q_i) \approx \delta_i^{d-k}.
\]
Suppose
\[
\mu(Q_i) \lesssim  \mathcal{L}^d(Q_i)^{s} 
\]
holds for all $i$.  Then
\[
1 \leq \sum_i \mu(Q_i) \lesssim \sum_i \mathcal{L}^d(Q_i)^{s} \approx   \sum_i \delta_i^{s(d-k)}  < \eps
\]
which is a contradiction for all sufficiently small $\eps>0$.  It follows that there exists a sequence of cuboids $Q$ with volume  tending to zero such that 
\[
\mu(Q) \gtrsim  \mathcal{L}^d(Q)^{s}  
\]
and, therefore, $\mu$ is $s$-flat.  Moreover,  since $\delta_i$ is the associated `width' of $(L^i)_{\delta_i}$, it is also the shortest sidelength of $Q$ and so $\mu$ is also $(s,d-k)$-flat as required.  The results in (1) and (2) then follow from Theorems \ref{knapp} and \ref{thm:improving}, respectively.  We now turn our attention to  (3) and the Fourier dimension.  Note that the $(d-k)$ many shortest sides of $Q_i$ are all equal to $\delta_i$. Therefore, since $s(d-k) <d-k$, Theorem \ref{fourier3} gives
\[
\fd \mu \leq 2 s (d-k)
\]
and since this is true for all $\mu$ on $X$ we also get the same upper bound for $\fd X$, as required.
\end{proof}

Concerning the Fourier dimension bounds we were in the fortunate position above to be able to apply Theorem \ref{fourier3}.  However, we could also apply Corollary \ref{fourier}.  We get that if $\alpha$ is the Frostman dimension of $\mu$ then
\[
\fd \mu \leq \frac{2s(d-\alpha) }{1-s}.
\]
However, this bound is always worse than the bound $2s(d-k)$ obtained above.  This is because  by Theorem \ref{basictube} $\alpha \leq \hd X \leq k+s(d-k)$.

Next we revisit a some simple examples in the tube nullity context.

\begin{example}
The circle  $S^{1} \subseteq \mathbb{R}^2$ is $(s, 1)$-tube null for all $s>1/2$ and this can be seen by considering 
\[
\approx \delta^{-\frac{1}{2}}
\]
many uniformly spaced tangent lines to $S^{1}$.  In particular, the associated $\delta$-tubes cover $S^{1}$ and 
\[
\delta^{-\frac{1}{2}} \delta^s \to 0
\]
provided $s>1/2$.

Applying Theorem \ref{tubenull} implies that if the $E(p\to q)$ extension estimate 
\eqref{eq:extension} holds for any measure $\mu$ on $S^{1}$ for  $p \geq 1$ and $q \geq 2$, then
\[
\frac{p}{q(p-1)} \leq \frac{1}{2}
\]
In particular, if $p=2$, then 
\[
q \geq 4.
\]
 These estimates are not sharp and loss is down to the fact that we have considered diameter $\approx 1$ tubes instead of truncating them to $\approx \sqrt{\delta}$ as in Example \ref{sphereeg}. 

Applying Theorem \ref{tubenull} again we get 
\[
\fd (S^{1}) \leq  2s
\]
and letting $s\searrow 1/2$ gives
\[
\fd (S^{1}) \leq  1
\]
which is sharp.
\end{example}

The above approach does not work well for spheres in higher dimensions, since they are not tube null in a useful sense. Indeed,  consider 
\[
\approx \delta^{-\frac{d-1}{2}}
\]
many uniformly spaced tangent planes to $S^{d-1} \subseteq \rd$.  The associated $\delta$-tubes cover $S^{d-1}$ and  
\[
\delta^{-\frac{d-1}{2}} \delta^s \to 0
\]
provided $s>(d-1)/2$ but this is trivial for $d \geq 3$.

\begin{example}
The cone $C^{2} \subseteq \mathbb{R}^3$ is $(s, 2)$-tube null for all $s>1/2$ and this can be seen by considering 
\[
\approx \delta^{-\frac{1}{2}}
\]
many uniformly  spaced tangent planes to $C^{2}$.  The associated $\delta$-tubes cover $C^{2}$ and 
\[
\delta^{-\frac{1}{2}} \delta^s \to 0
\]
provided $s>1/2$.

Applying Theorem \ref{tubenull} implies that if   the $E(p\to q)$ extension estimate 
\eqref{eq:extension} holds for any measure $\mu$ on $C^2$ for  $p \geq 1$ and $q \geq 2$, then
\[
\frac{p}{q(p-1)} \leq \frac{1}{2}
\]
In particular, if $p=2$, then 
\[
q \geq 4.
\]
Again these  estimates are not sharp and the loss is again down to the fact that we have considered diameter $\approx 1$ tubes instead of truncating them to $\approx \sqrt{\delta}$ as in Example \ref{coneeg}. 

Applying Theorem \ref{tubenull} again we get 
\[
\fd (C^{2}) \leq  2s
\]
and letting $s\searrow 1/2$ gives
\[
\fd (C^{d-1}) \leq  1
\]
which \emph{is} sharp, and more interesting than the bound $\fd S^1 \leq 1$ since, in the case of the cone, the upper bound is strictly smaller than the Hausdorff dimension (which is 2).  This gives a simpler  and completely different proof of a result of Fraser, Harris and Kroon \cite{kroon}.
\end{example}

Again, the above strategy does not work in higher dimensions (this time $d \geq 4$) because then the cones fail to be tube null in a useful sense.  It does work for the `cone' in the plane, but this is a trivial example because it is just two line segments.

\begin{example}
Let $X=E \times [0,1]$ where $E$ is a Salem set of Hausdorff (and box) dimension $\beta \in (0,1)$.  Then $X$ is $(\beta, 1)$-tube null and so, by Theorem \ref{tubenull}, if   the $E(p\to q)$ extension estimate 
\eqref{eq:extension} holds for any measure $\mu$ on $X$ for  $p \geq 1$ and $q \geq 2$, then
\[
\frac{p}{q(p-1)} \leq \beta.
\]
In particular, if $p=2$, then 
\[
q \geq \frac{2}{\beta}
\]
which beats the estimate
\[
q \geq \frac{2}{1+\beta}
\]
coming from Theorem \ref{basic}. Further, applying Theorem \ref{tubenull} again,
\[
\fd X \leq 2 \beta.
\]
This is not sharp, and the  real answer is $\fd X = \beta$.  The lower bound follows by decomposing appropriate product measures on $X$ and the upper bound comes from the fact that projection cannot decrease Fourier dimension.
\end{example}

\subsection{Flatness via large slices}

An important problem in geometric measure theory is to understand the `slices' of a fractal and, in particular, how one can relate the dimensions of the slices to the dimensions of the original set.  Here a \emph{slice} is the intersection of the set with an affine subspace.  We write $A(d,k)$ for the affine Grassmannian consisting of all $k$-dimensional  affine subspaces $V= V'+z$ for some $V' \in G(d,k)$ and $z \in V^{\perp}$.

If a set has a large slice, then one might expect lots of the set to be concentrated near the associated affine subspace and that this provides quantitative flatness. However, this is not  the whole story as having a large slice is not enough to control the measure \emph{near} the slice.  Instead, the strategy is to  couple the information coming from the slice (which we quantify using the Assouad spectrum) with global control of the measure on small balls coming from the box dimension (of the measure) to get what we need.

\begin{thm} \label{slices}
Let $X \subseteq \rd$ be a compact set and $\mu$ be a measure fully supported on $X$. Fix a $k$-dimensional affine subspace $V \in A(d,k)$. Then $\mu$ is $(\gamma, \upsilon)$-flat for all 
\begin{equation} \label{expforg}
\gamma> \frac{\ubd \mu -(1-\theta)\as \left(V \cap X\right) }{k\theta+(d-k)}
\end{equation}
and
\[
\upsilon = k\theta+(d-k).
\]
Therefore:
\begin{enumerate}
\item If $\mu$   satisfies the $E(p \to q )$  extension estimate \eqref{eq:extension}, then for all $V \in A(d,k)$,
\[
\frac{p}{q(p-1)} \leq \frac{\ubd \mu -(1-\theta)\as \left(V \cap X\right) }{k\theta+(d-k)}
\]
In particular, if $p=2$ then 
\[
q \geq \frac{2(k\theta+(d-k))}{\ubd \mu -(1-\theta)\as \left(V \cap X\right) }.
\]
Setting $\theta=0$ we get that 
\[
\frac{p}{q(p-1)} \leq  \frac{\ubd \mu -\ubd \left(V \cap X\right) }{d-k}
\]
and, for $p=2$, 
\[
q \geq \frac{2(d-k)}{\ubd \mu -\ubd \left(V \cap X\right) }.
\]
\item If $\mu$  satisfies the $I(p \to q)$ improving estimate \eqref{improvingest}, then
\[
\frac{1}{p}-\frac{1}{q} \leq \frac{\ubd \mu -(1-\theta)\as \left(V \cap X\right) }{k\theta+(d-k)} . 
\] 
\item The bounds
\[
\fd \mu \leq \frac{2\gamma(d-\alpha)}{1-\gamma}
\]
where $\alpha$ is the Frostman dimension of $\mu$ and $\gamma$ can be taken as the right hand side of \eqref{expforg}. Moreover, provided 
\[
\ubd \mu -\as \left(V \cap X\right)(1-\theta)  < d-k
\]
we also get
\[
\fd \mu \leq 2 \left(\ubd \mu -\as \left(V \cap X\right)(1-\theta) \right).
\]
\end{enumerate}
\end{thm}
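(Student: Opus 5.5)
The plan is to build the witnessing cuboids directly from the definition of the Assouad spectrum of the slice $V\cap X$, in the same spirit as the proof of Theorem \ref{surface2}, and then feed them into Theorems \ref{knapp}, \ref{thm:improving}, Corollary \ref{fourier} and Theorem \ref{fourier3}. Fix $s<\as(V\cap X)$ and $t>\ubd\mu$. By the definition of the Assouad spectrum there is a sequence $r\to0$ and points $x\in V\cap X$ such that $B(x,r^\theta)\cap V\cap X$ contains an $r$-separated set $Y$ with $\# Y\gtrsim r^{(\theta-1)s}$. Let $Q$ be the cuboid centred at $x$ with $k$ sides of length $\approx r^\theta$ in the directions of $V$ and $d-k$ sides of length $\approx r$ orthogonal to $V$. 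Then $Q$ contains the disjoint balls $B(y,r/10)$, $y\in Y$, after enlarging $Q$ by a constant factor.

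The mass estimate is the key step. Each $y\in Y$ lies in $X=\operatorname{supp}\mu$, so by the definition of the upper box dimension of $\mu$ we have $\mu(B(y,r/10))\gtrsim r^{t}$ uniformly in $y$ and $r$. This is exactly why global control via $\ubd\mu$, rather than just the slice, is needed. Summing over the disjoint balls gives
\[
\mu(Q)\gtrsim r^{(\theta-1)s+t},\qquad \mathcal{L}^d(Q)\approx r^{k\theta+(d-k)}.
\]
Hence $\mu(Q)\gtrsim\mathcal{L}^d(Q)^{\gamma}$ with $\gamma=\frac{t-(1-\theta)s}{k\theta+(d-k)}$. Since the shortest side is $\delta\approx r$, we also get $\mathcal{L}^d(Q)\approx\delta^{k\theta+(d-k)}$, which gives $\upsilon=k\theta+(d-k)$. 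Letting $s\to\as(V\cap X)$ and $t\to\ubd\mu$ yields every $\gamma$ strictly exceeding the right-hand side of \eqref{expforg}. One should check that $\gamma\geq0$, but if it is negative there is nothing to prove beyond $0$-flatness. The $\theta=0$ case uses the analogous covering statement with $\ubd$, taking $Q$ to be an $r$-neighbourhood of $V$ intersected with a ball of radius $\approx 1$.

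Parts (1) and (2) then follow immediately from Theorems \ref{knapp} and \ref{thm:improving}, letting $\gamma$ decrease to the bound. For (3), the first bound is Corollary \ref{fourier} applied with this $\gamma$ (valid when $\gamma<1$; otherwise it is vacuous).

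For the second bound I would use Theorem \ref{fourier3} with $m=d-k$, since the $d-k$ shortest sides of $Q$ all equal $\approx r$. Their product is $r^{d-k}$, while $\mu(Q)\gtrsim r^{t-(1-\theta)s}$. If $2(t-(1-\theta)s)<\fd\mu$, the idea is to repeat the argument of Theorem \ref{fourier3} quantitatively. I expect this to be the main obstacle, because Theorem \ref{fourier3} as stated only gives $2m$.

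To get around this, note that the hypothesis $\ubd\mu-\as(V\cap X)(1-\theta)<d-k$ says exactly that $\gamma\upsilon<m$. This lets me invoke Theorem \ref{fourier2} instead, with the $m=d-k$ comparable shortest sides, which gives $\fd\mu\le2\gamma\upsilon=2(t-(1-\theta)s)$. Letting $s,t$ tend to their limits gives the claim.
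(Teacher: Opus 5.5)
Your proposal is correct and follows the paper's proof: the same cuboids are built from an $r$-separated subset of $V\cap X$ inside an $r^\theta$-window, thickened by $\approx r$ in the $d-k$ normal directions, with ball masses bounded below via $\ubd\mu$, and then Theorems \ref{knapp}, \ref{thm:improving} and Corollary \ref{fourier} are applied. You are also right to invoke Theorem \ref{fourier2} with $m=d-k$ for the final bound: although the paper's text cites Theorem \ref{fourier3}, the hypothesis $\gamma\upsilon<d-k$ and the conclusion $\fd\mu\le 2\gamma\upsilon$ that it actually uses are those of Theorem \ref{fourier2}.
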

\begin{proof}
Let $V \in A(d,k)$  and let
\[
s<\as V \cap X.
\]
By definition there exists a sequence of scales $r \to 0$ and associated  cubes $Q' \subseteq V$ with sidelengths $r^\theta$ such that we may find an $r$-separated subset $E$ of $V \cap X \cap Q'$ of size at least $\gtrsim r^{s(\theta-1)}$.  Let $Q$ be the cuboid given by the $r$-thickening of $Q'$, that is, 
\[
Q = Q'_r \subseteq \rd.
\]
Then
\[
\mathcal{L}^{d}(Q) \approx   r^{k\theta+(d-k)}.
\]
Moreover, we may find a collection of disjoint $r$-balls centred at each point of $E$ which lie completely inside $Q$.  Let $t> \ubd \mu$.  By assumption, the $\mu$ mass of each of these $r$-balls is at least 
\[
\gtrsim r^{t}
\]
and therefore
\[
\mu(Q)  \gtrsim r^{s(\theta-1)+t}  = \mathcal{L}^{d}(Q)^{\frac{s(\theta-1)+t}{k\theta+(d-k)}}
\]
and so $\mu$ is $(\gamma, \upsilon)$-flat for 
\[
\gamma = \frac{s(\theta-1)+t}{k\theta+(d-k)}
\]
and  $\upsilon= k\theta+(d-k)$.  Letting $s \nearrow \as V\cap X$ and $t \searrow \ubd \mu$ gives the desired flatness result.

The estimates in (1) and (2) then follow from Theorems \ref{knapp} and \ref{thm:improving}, respectively.  We now turn our attention to the Fourier dimension and (3).  Applying Corollary \ref{fourier} we get the first bound.  To get the final estimate we need to be in a position to apply Theorem \ref{fourier3}.  This requires the additional assumption.  The $(d-k)$ many shortest sides of $Q$ are of length $\approx r$ and   so, provided $t$ and $s$ are such that
\[
t-s(1-\theta)  < d-k
\]
(which we can ensure by assumption),  we may apply Theorem \ref{fourier3} with $\upsilon= k\theta+(d-k)$ to obtain
\[
\fd \mu \leq 2 \frac{t-s(1-\theta) }{k\theta+(d-k)} \upsilon   = 2(t-s(1-\theta)) 
\]
and letting $s \nearrow \as V \cap X$ and $t \searrow \ubd \mu$ completes the proof of the final claim.
\end{proof}

Given the previous result, it is natural to ask whether non-trivial estimates come from Marstrand's slicing theorem which ensures that all sufficiently large sets must have some (in fact many) large slices.  However, the answer is no and, in fact, this is to be expected since the bounds would apply too generally.  In other words, to get non-trivial information we need atypically large slices.

In order to obtain strong results we are looking for a measure with small box dimension supported on a set with a slice with large Assouad spectrum.  In the extreme case when the former is as small as possible and the latter is as large as possible we obtain   optimal estimates.  Recall that we may always choose $\mu$ on $X$ with $\ubd \mu = \ubd X  $ and so if a set has a slice  with maximal Assouad spectrum then there always exists a measure---a very natural measure---on the set with Fourier dimension 0 and no restriction estimates. The following result is a direct corollary of Theorem \ref{slices} and Theorem \ref{flattening}.

\begin{cor} \label{slicegive0}
Suppose $X \subseteq \rd$ is compact and $\mu$ is a measure fully supported on $X$ with $\ubd \mu = \ubd X  $.  Suppose there exists    $V \in A(d,k)$ for some $1 \leq k < d$  and $\theta \in (0,1)$ such that 
\[
\as (V \cap X) = \frac{\ubd (V \cap X)}{1-\theta} = \frac{\ubd X}{1-\theta}.
\]
Then $\mu$ is $\gamma$-flat for all $\gamma>0$,  there are no non-trivial  $E(p \mapsto q)$ extension estimates   or   $I(p \to q)$ improving estimates  for $\mu$,   and $\mu$ is not $L^2$-flattening. Moreover, $\fd \mu = 0$, the Frostman and Sobolev dimensions of $\mu$ are at most $d-1/2$ and 
\[
\fs \mu \leq  (d-1/2) \, \theta 
\]
for all $0 \leq \theta \leq 1$.
\end{cor}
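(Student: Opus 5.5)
The plan is to feed the hypothesis directly into the flatness exponent \eqref{expforg} of Theorem \ref{slices} and check that it can be made arbitrarily small. Fix the $V \in A(d,k)$ and the $\theta \in (0,1)$ given in the statement, and write $b = \ubd X = \ubd \mu$. By Theorem \ref{slices}, $\mu$ is $(\gamma,\upsilon)$-flat for every
\[
\gamma > \frac{\ubd \mu -(1-\theta)\as (V \cap X)}{k\theta+(d-k)} = \frac{b - (1-\theta)\frac{b}{1-\theta}}{k\theta+(d-k)} = 0 .
\]
Hence $\mu$ is $\gamma$-flat for all $\gamma>0$. The only point to check is that the hypothesis is used exactly: the assumption $\ubd \mu = \ubd X$ makes the numerator vanish. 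Note that the intermediate equality $\ubd(V\cap X) = \ubd X$ is not needed for the flatness itself; it only records that the slice is as large as possible.

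With $\gamma$-flatness for all $\gamma>0$ in hand, the remaining conclusions follow from earlier results by letting $\gamma \to 0$.

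\textbf{Extension estimates.} Theorem \ref{knapp} gives $\frac{p}{q(p-1)} \leq \gamma$ for every $\gamma>0$, so $p/(q(p-1))=0$. This forces $p=\infty$ or $q=\infty$, which are the trivial estimates.

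\textbf{Improving estimates.} Corollary \ref{noimpro}, or equivalently Theorem \ref{thm:improving} with $\gamma\to0$, rules out every $I(p\to q)$ estimate with $q>p$.

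\textbf{Fourier dimension.} Corollary \ref{fouriernofrost} requires support on a Lebesgue null set. I will avoid that requirement and instead use Theorem \ref{flattening}. It gives $\fs\mu \leq (d-1/2)\theta$ for all $\theta\in[0,1]$; the case $\theta = 0$ yields $\fd\mu = 0$.

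\textbf{Remaining conclusions.} The same theorem gives the Frostman and Sobolev bounds of $d-1/2$ and shows that $\mu$ is not $L^2$-flattening.

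I expect no real obstacle. The one subtlety is a clash of notation: $\theta$ appears both as the fixed Assouad-spectrum parameter from the hypothesis and as the free Fourier-spectrum variable in the final bound. I will make clear that the spectrum bound holds for all $\theta'\in[0,1]$, independently of the fixed $\theta$.
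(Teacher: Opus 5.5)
Your proposal is correct and is exactly the paper's argument: the hypothesis makes the numerator of the flatness threshold in Theorem \ref{slices} vanish, which gives $\gamma$-flatness for every $\gamma>0$, and Theorems \ref{knapp}, \ref{thm:improving} and \ref{flattening} then deliver the remaining conclusions. One small slip: since $\frac{p}{q(p-1)}=p'/q$ with $p'\geq 1$, the Knapp bound forces $q=\infty$ (the genuinely trivial case), not ``$p=\infty$ or $q=\infty$''; an $E(\infty\to q)$ estimate with $q<\infty$ is not trivial, but it is excluded by the same inequality.
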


For certain  examples we may find that  atypically large slice exists by construction. We state a simple corollary which applies to two well-known dynamically defined fractals.

\begin{cor} \label{rauzy}
Suppose $X\subseteq \rd$ is a  compact set which contains a piece of a $k$-dimensional affine subspace.  Let $\mu$ be a measure fully supported on $X$  with upper box dimension $\ubd \mu = s$. Then $\mu$ is $(\gamma, (d-k))$-flat for all 
\begin{equation} \label{expforg2}
\gamma> \frac{s -k}{d-k}.
\end{equation}
Therefore:
\begin{enumerate}
\item If $\mu$   satisfies the $E(p \to q )$  extension estimate \eqref{eq:extension}, then 
\[
\frac{p}{q(p-1)} \leq \frac{s -k}{d-k}.
\]
In particular, if $p=2$ then 
\[
q \geq \frac{2(d-k)}{s -k}.
\]
\item If $\mu$  satisfies the $I(p \to q)$ improving estimate \eqref{improvingest}, then
\[
\frac{1}{p}-\frac{1}{q} \leq \frac{s -k}{d-k} . 
\] 
\item Provided  $s<d$, 
\[
\fd \mu \leq 2(s-k).
\]
\end{enumerate}
In particular,   any fully supported measure with $\ubd \mu <\frac{ \hd \mu}{2} +k$ is not Salem.   Further, any fully supported measure with $\ubd \mu = k$ has Fourier dimension zero,  no non-trivial extension or improving estimates, and is not $L^2$-flattening.

These estimates apply to the Sierpi\'nski triangle, Sierpi\'nski carpet, and the Rauzy gasket with $d=2$, and  $k=1$. 
\end{cor}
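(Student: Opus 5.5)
This is a direct specialisation of Theorem \ref{slices}. Let $V \in A(d,k)$ be the affine subspace such that $X$ contains a piece of $V$, say $V \cap X$ contains a nondegenerate $k$-dimensional ball $B_0 \subseteq V$. Then the slice $V \cap X$ has full dimension in $V$. For every $\theta \in (0,1)$ we have $\as(V\cap X) = k$, since every small ball centred in $B_0$ meets $V \cap X$ in a full $k$-dimensional piece. The upper bound $\as(V \cap X) \leq k$ is trivial. In particular we may take $\theta = 0$ (equivalently, use $\ubd(V\cap X) = k$ and the $\theta=0$ form of the estimate). Plugging into \eqref{expforg} with $\theta=0$ gives $(\gamma,\upsilon)$-flatness for all $\gamma > (s-k)/(d-k)$ with $\upsilon = d-k$. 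If one prefers to avoid the endpoint $\theta=0$, I will simply redo the proof of Theorem \ref{slices} with $\theta = 0$ by hand. Fix a cube $Q' \subseteq B_0$ of side $\approx 1$ and let $Q = Q'_r$, a cuboid with $k$ sides $\approx 1$ and $d-k$ sides $\approx r$. Then $\mathcal{L}^d(Q) \approx r^{d-k}$. The cube $Q'$ contains $\approx r^{-k}$ disjoint $r$-balls centred in $X$, and each has mass $\gtrsim r^t$ for any $t > s = \ubd \mu$, using full support and the definition of $\ubd\mu$. Hence $\mu(Q) \gtrsim r^{t-k} = \mathcal{L}^d(Q)^{(t-k)/(d-k)}$.

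Items (1) and (2) then follow from Theorems \ref{knapp} and \ref{thm:improving}. For (3), I will use Theorem \ref{fourier3} with $m=d-k$, exactly as at the end of the proof of Theorem \ref{slices}. The $d-k$ shortest sides of $Q$ are all $\approx r$, and $\mu(Q)/r^{d-k} \gtrsim r^{t-k-(d-k)} = r^{t-d} \to \infty$ provided $t<d$, which is possible when $s<d$. That argument, and the mechanism of Theorem \ref{fourier2}, yields $\fd \mu \leq 2\gamma\upsilon = 2(t-k)$. Letting $t \searrow s$ gives $\fd\mu \leq 2(s-k)$. The condition needed there is $\gamma\upsilon = t-k < d-k$, i.e.\ $t<d$, and this is precisely the hypothesis $s<d$. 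I expect this to be the only delicate point: Theorem \ref{fourier3} as stated only gives $2m$, so I should really invoke the isotropic-shortest-sides Theorem \ref{fourier2}, whose hypothesis $\upsilon\gamma < m$ is $t-k<d-k$, to get the bound $2\upsilon\gamma$.

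For the consequences, first recall that $\hd\mu \leq \hd X \leq \ubd \mu$. The non-Salem claim then follows since $\fd\mu \leq 2(\ubd\mu - k) < \hd\mu$ exactly when $\ubd\mu < \hd\mu/2 + k$. If $\ubd\mu = k$, then $\gamma$ can be taken arbitrarily small, so $\mu$ is $\gamma$-flat for all $\gamma>0$. Theorem \ref{knapp}, Corollary \ref{noimpro} and Theorem \ref{flattening} then give no nontrivial estimates, $\fd\mu=0$, and failure of $L^2$-flattening. For the examples, note that the Sierpi\'nski triangle and carpet contain line segments (their boundary edges), and the Rauzy gasket contains the sides of its bounding triangle. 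Each carries a natural fully supported measure, e.g.\ the self-similar measure, with $\ubd\mu$ equal to the dimension of the set; for the Rauzy gasket one takes a measure achieving $\ubd \mu = \ubd X$ as recalled in Section \ref{dimreview}. Thus $d=2$, $k=1$ applies.
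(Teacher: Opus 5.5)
Your argument for the flatness claim and for items (1)--(3) is correct, and it is the specialisation of Theorem \ref{slices} at $\theta=0$ (using $\ubd(V\cap X)=k$) that the paper has in mind. Your hand-built cuboid $Q=Q'_r$ simply re-runs that proof. You are also right that the bound $2\upsilon\gamma=2(t-k)$ in (3) comes from Theorem \ref{fourier2} with $m=d-k$, whose hypothesis is $t<d$. Theorem \ref{fourier3} only yields the integer bound $2m$. The proof of Theorem \ref{slices} cites Theorem \ref{fourier3} at this point, but the hypothesis it checks there is that of Theorem \ref{fourier2}.

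The gap is in the non-Salem consequence. You write that $\fd\mu\le 2(\ubd\mu-k)<\hd\mu$ whenever $\ubd\mu<\hd\mu/2+k$. However, $\fd\mu\le 2(\ubd\mu-k)$ is item (3), which requires $s=\ubd\mu<d$, and you never verify this. The hypothesis only gives $s<\hd\mu/2+k\le d/2+k$. That forces $s<d$ when $k\le d/2$, so the $d=2$, $k=1$ examples are fine, but not when $k>d/2$.

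In that range the claim is actually false. Take $d=3$, $k=2$, and let $\mu$ be Lebesgue measure on the closed unit ball $X\subseteq\mathbb{R}^3$, which contains a planar disc. Then $\ubd\mu=\hd\mu=3<\hd\mu/2+k=7/2$. On the other hand $|\widehat{\mu}(\xi)|\lesssim|\xi|^{-2}$, so $\fd\mu\geq 4$ and $\min\{\fd\mu,3\}=3=\hd\mu$. Hence $\mu$ is Salem. Consistently, the flatness exponent here is $(s-k)/(d-k)=1$, which carries no information. So this consequence needs the extra hypothesis $\ubd\mu<d$ (automatic when $k\le d/2$), and your argument should state and use it.
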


\subsection{Nonlinear slices}

The results in the previous section say that if an affine subspace intersects the support of a measure in a set with large Assouad spectrum then the measure is forced to be quantitatively flat.  A natural extension of this is to replace the affine subspace with a (nonlinear) $k$-dimensional surface with suitable smoothness.  This is not just a routine extension because the same results clearly cannot hold.  If the surface is curved enough then this can preserve Fourier decay even for measures which are very close to (or even completely contained in) the surface.  That is, we will necessarily see different features appearing in the estimates coming from the (potential) curvature. We write $a \wedge b = \min\{a,b\}$.

\begin{thm} \label{nonlinearslices}
Suppose $X \subseteq \rd$ is compact and $\mu$ is a measure fully supported on $X$. Fix a $C^2$ $k$-dimensional surface  $S \subseteq \rd$. Then $\mu$ is $(\gamma, \upsilon)$-flat for all 
\begin{equation} \label{expforg}
\gamma>  \frac{\ubd \mu -(1-\theta) \as (X \cap S)}{k\theta+(d-k) (2\theta \wedge 1)}
\end{equation}
and
\[
\upsilon = \frac{k\theta+(d-k) (2\theta \wedge 1)}{(2\theta \wedge 1)}.
\]
Therefore:
\begin{enumerate}
\item If $\mu$   satisfies the $E(p \to q )$  extension estimate \eqref{eq:extension}, then 
\[
\frac{p}{q(p-1)} \leq  \frac{\ubd \mu -(1-\theta) \as (X \cap S)}{k\theta+(d-k) (2\theta \wedge 1)}.
\]
In particular, if $p=2$, then 
\[
q \geq \frac{2(k\theta+(d-k) (2\theta \wedge 1))}{\ubd \mu -(1-\theta)\as \left(X \cap S\right) }.
\]
\item If $\mu$  satisfies the $I(p \to q)$ improving estimate \eqref{improvingest}, then
\[
\frac{1}{p}-\frac{1}{q} \leq   \frac{\ubd \mu -(1-\theta) \as (X \cap S)}{k\theta+(d-k) (2\theta \wedge 1)}. 
\] 
\item The bound
\[
\fd \mu \leq \frac{2\gamma(d-\alpha)}{1-\gamma}
\]
holds where $\alpha$ is the Frostman dimension of $\mu$ and $\gamma$ can be taken as the right hand side of \eqref{expforg}. Moreover, provided 
\[
\ubd \mu -(1-\theta)\as \left(X \cap S\right)  < (d-k)(2\theta \wedge 1)
\]
we get
\[
\fd \mu \leq \frac{2 \left(\ubd \mu -\as \left(S \cap X\right)(1-\theta) \right)}{2\theta \wedge 1}.
\]
\end{enumerate}
\end{thm}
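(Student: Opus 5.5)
We adapt the proof of Theorem \ref{slices}, replacing the flat cube $Q'\subseteq V$ by a curved patch of $S$ and using the $C^2$ property to trap that patch inside a thin cuboid. Fix $s<\as(X\cap S)$ and $t>\ubd \mu$. By the definition of the Assouad spectrum there are scales $r\to 0$ and points $x\in X\cap S$ such that $B(x,r^\theta)\cap X\cap S$ contains an $r$-separated set $E$ with $\# E\gtrsim r^{s(\theta-1)}$. Since $S$ is $C^2$ (and we may restrict to a compact piece of $S$ with uniformly bounded second fundamental form), $S\cap B(x,r^\theta)$ lies within distance $\lesssim r^{2\theta}$ of the tangent plane $T_xS$.

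\textbf{Building the cuboid.} Let $Q$ be the cuboid centred at $x$ with $k$ sides of length $\approx r^\theta$ along $T_xS$. Give it $d-k$ sides of length $\approx r^{2\theta}+r\approx r^{2\theta\wedge 1}$ in the normal directions. The extra $r$ ensures that the $r$-balls around points of $E$ fit inside $Q$: when $2\theta\geq 1$ the curvature displacement $r^{2\theta}$ is dominated by $r$, and otherwise the normal thickness $r^{2\theta}$ dominates. Enlarging by constants, the disjoint balls $B(y,r/2)$ for $y\in E$ all lie in $Q$. Each has $\mu$-mass $\gtrsim r^{t}$, since $y\in X=\mathrm{supp}\,\mu$ and $t>\ubd\mu$. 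Hence
\[
\mu(Q)\gtrsim r^{s(\theta-1)+t}, \qquad \mathcal{L}^d(Q)\approx r^{k\theta+(d-k)(2\theta\wedge 1)},
\]
and the shortest side is $\delta\approx r^{2\theta\wedge1}$. This gives $\mathcal{L}^d(Q)\approx \delta^{\upsilon}$ with $\upsilon=(k\theta+(d-k)(2\theta\wedge1))/(2\theta\wedge1)$, and $\mu(Q)\gtrsim\mathcal{L}^d(Q)^{\gamma}$ with $\gamma=(t-s(1-\theta))/(k\theta+(d-k)(2\theta\wedge1))$. Letting $s\nearrow\as(X\cap S)$ and $t\searrow\ubd\mu$ gives $(\gamma,\upsilon)$-flatness for every $\gamma$ exceeding the right-hand side of \eqref{expforg}.

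\textbf{Consequences.} Items (1) and (2) follow immediately from Theorems \ref{knapp} and \ref{thm:improving}. The first bound in (3) follows from Corollary \ref{fourier}. For the second bound in (3) we apply Theorem \ref{fourier3} with $m=d-k$, since the $d-k$ shortest sides of $Q$ are comparable, each $\approx\delta$. Their product is $\delta^{d-k}=r^{(d-k)(2\theta\wedge1)}$. Under the hypothesis $t-s(1-\theta)<(d-k)(2\theta\wedge1)$, the quantity $\mu(Q)/\delta^{d-k}\to\infty$. The bound then follows by the same argument as at the end of Theorem \ref{slices}: projecting onto the $(d-k)$ normal directions, the Mitsis/Frostman control of the projection with constants uniform over the projection (as in the proof of Theorem \ref{fourier2}) yields $\fd\mu\le 2\gamma\upsilon=2(t-s(1-\theta))/(2\theta\wedge1)$. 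Letting $s$ and $t$ tend to their limits completes the argument.

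\textbf{Main obstacle.} The delicate step is the geometric containment. One must choose the normal thickness $r^{2\theta\wedge1}$ correctly so that both the curvature of $S$ over the $r^\theta$-patch and the $r$-balls are captured. One must also ensure the implicit constants are uniform in the base point $x$, which requires the uniform $C^2$ bound on a compact piece of $S$.
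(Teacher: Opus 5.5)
Your proposal is correct and follows the paper's proof essentially step for step. It uses the same cuboid built from $T_xS$, with $k$ sides $\approx r^\theta$ and $d-k$ normal sides $\approx r^{2\theta\wedge 1}$. It counts the same $\gtrsim r^{s(\theta-1)}$ disjoint $r$-balls, each of mass $\gtrsim r^t$, and then invokes Theorems \ref{knapp} and \ref{thm:improving} and Corollary \ref{fourier}.

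One small clean-up concerns the last bound in (3). Theorem \ref{fourier3} on its own only yields $\fd\mu\le 2(d-k)$. What you actually use, as your projection/Mitsis sentence indicates, is Theorem \ref{fourier2} with $m=d-k$: its hypothesis $\upsilon\gamma<m$ is exactly the stated assumption, and its conclusion $\fd\mu\le 2\upsilon\gamma$ gives the required factor $1/(2\theta\wedge 1)$.
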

\begin{proof}
Let
\[
s<\as (S \cap X).
\]
By definition there exists a sequence of scales $r \to 0$ and associated  balls $B \subseteq \rd$ of diameter $r^\theta$ centred in $S$ such that we may find an $r$-separated subset $E$ of $S \cap X \cap Q'$ of size at least $\gtrsim r^{s(\theta-1)}$.  Then we may find a collection of disjoint $r$-balls centred at each point of $E$ which lie completely inside $B$.  However, since $S$ is $C^2$ and of dimension $k$ we can find a cuboid $Q$ which contains all of the $r$-balls centred at points in $E$ which has $k$ sides of length $r^\theta$ and $(d-k)$ sides of length 
\[
\approx \max\{ r^{2\theta}, r\} = r^{2\theta \wedge 1}.
\]
This can be constructed by taking the ($k$-dimensional) tangent plane of $S$ at the centre of $B$ intersected with $B$, extending this to a $k$-dimensional square $Q'$ of sidelength $\approx r^\theta$ and then taking the $r^{2\theta \wedge 1}$ thickening of it, that is, 
\[
Q = Q'_{r^{2\theta \wedge 1}} \subseteq \rd.
\]
The smoothness of $S$ ensures that this thickening captures the desired balls. The $ r^{2\theta}$ captures all of $S\cap B$ and the $r$ term catches the $r$-balls centred in $E \subseteq S$. Then
\[
\mathcal{L}^{d}(Q) \approx   r^{k\theta+(d-k) (2\theta \wedge 1)}.
\]
 Let $t> \ubd \mu$.  By assumption, the $\mu$ mass of each of these $r$-balls is at least 
\[
\gtrsim r^{t}
\]
and therefore
\[
\mu(Q)  \gtrsim r^{s(\theta-1)+t}  = \mathcal{L}^{d}(Q)^{\frac{s(\theta-1)+t}{k\theta+(d-k) (2\theta \wedge 1)}}
\]
and so $\mu$ is $(\gamma, \upsilon)$-flat for 
\[
\gamma = \frac{s(\theta-1)+t}{k\theta+(d-k) (2\theta \wedge 1)}
\]
and 
\[
\upsilon= \frac{k\theta+(d-k) (2\theta \wedge 1)}{(2\theta \wedge 1)}
\]
noting that the shortest side of $Q$ has length $\approx r^{2\theta \wedge 1}$.  Letting $s \nearrow \as V\cap X$ and $t \searrow \ubd \mu$ gives the desired flatness result.

The results in (1) and (2) then follow from Theorems \ref{knapp} and \ref{thm:improving}, respectively.   We now turn our attention to the Fourier dimension and (3).  Applying Corollary \ref{fourier} we get the first bound.  To get the final estimate we need to be in a position to apply Theorem \ref{fourier3} and this requires the additional assumption.  The $(d-k)$ many shortest sides of $Q$ are of length $\approx r^{2\theta \wedge 1}$ and   so, provided $t$ and $s$ are such that
\[
t-s(1-\theta)  < d-k
\]
(which we can ensure by assumption), we may apply Theorem \ref{fourier3} with $\upsilon= k\theta+(d-k)$ to obtain
\[
\fd \mu \leq 2 \frac{t-s(1-\theta) }{k\theta+(d-k)} \upsilon   = 2(t-s(1-\theta)) 
\]
and letting $s \nearrow \as V \cap X$ and $t \searrow \ubd \mu$ completes the proof of the final claim.
\end{proof}

It is interesting to compare Theorem \ref{nonlinearslices} with Theorem \ref{slices}.  One notable difference is that Theorem \ref{nonlinearslices} does not give anything useful in the case $\theta=0$.  This is because when $\theta$ is very small, the (potential) curvature of $S$ forces $Q$ to become quantitatively less flat.  Moreover, we do not get a conclusion of the form: a measure with box dimension $k$ fully supported on a set which contains a piece of a $k$-dimensional surface has Fourier dimension zero.  But this is simply not true, for example the surface measure on a sphere. Nevertheless, we can still obtain an analogue of Corollary \ref{rauzy} but w are forced to use larger $\theta$.  In fact, the bound obtained is optimised at $\theta=1/2$ (provided $\ubd \mu \leq d$) but we see a loss compared with the estimate in Corollary \ref{rauzy} and this can be interpreted as coming from potential curvature.

\begin{cor}
Suppose $X\subseteq \rd$ is a  compact set which contains a piece of a $k$-dimensional $C^2$ surface.  Let $\mu$ be a measure fully supported on $X$  with upper box dimension $\ubd \mu = s$. Then $\mu$ is $(\gamma, d-k/2)$-flat for all 
\begin{equation} 
\gamma>  \frac{s -k/2}{d-k/2}.
\end{equation}
Therefore:
\begin{enumerate}
\item If $\mu$   satisfies the $E(p \to q )$  extension estimate \eqref{eq:extension}, then 
\[
\frac{p}{q(p-1)} \leq \frac{s -k/2}{d-k/2}.
\]
In particular, if $p=2$ then 
\[
q \geq \frac{2d-k}{s -k/2}.
\]
\item If $\mu$  satisfies the $I(p \to q)$ improving estimate \eqref{improvingest}, then
\[
\frac{1}{p}-\frac{1}{q} \leq  \frac{s -k/2}{d-k/2}. 
\] 

\item Provided 
\[
s-k/2 < d-k
\]
we get
\[
\fd \mu \leq 2s-k.
\]
\end{enumerate}
In particular, these estimates apply to the Apollonian circle packing with $d=2$, $k=1$ and $S$ a circle. 
\end{cor}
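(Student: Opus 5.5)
The plan is to specialise Theorem \ref{nonlinearslices} to $\theta=1/2$, since then $2\theta\wedge 1 = 1$. The first step is to compute the Assouad spectrum of the slice. Let $S$ be the $k$-dimensional $C^2$ surface whose piece lies in $X$. After shrinking $S$ to that piece, $X\cap S$ contains a relatively open subset of a $k$-dimensional $C^2$ manifold. At scales $r<r^{\theta}$ such a set locally looks like a bi-Lipschitz image of a $k$-cube, so
\[
N_r\big(B(x,r^\theta)\cap X\cap S\big)\approx (r^{\theta}/r)^{k}
\]
for $x$ in that piece. Since $X\cap S\subseteq S$, we also have the upper bound $\as(X\cap S)\leq k$. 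Hence $\as(X\cap S)=k$ for every $\theta\in(0,1)$.

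Next I substitute $\theta=1/2$, $\as(X\cap S)=k$ and $\ubd\mu=s$ into Theorem \ref{nonlinearslices}. The exponent becomes
\[
\gamma>\frac{s-k/2}{k/2+(d-k)}=\frac{s-k/2}{d-k/2}.
\]
The anisotropy parameter becomes $\upsilon=(k/2+(d-k))/1=d-k/2$. This gives the claimed $(\gamma,d-k/2)$-flatness. As a sanity check on the geometry, the cuboid in the proof of Theorem \ref{nonlinearslices} has $k$ sides of length $r^{1/2}$ and $d-k$ sides of length $\approx r$. Its shortest side is $\approx r$ and its volume is $\approx r^{d-k/2}$, consistent with $\upsilon=d-k/2$. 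Parts (1) and (2) then follow immediately from Theorems \ref{knapp} and \ref{thm:improving} with this $\gamma$, letting $\gamma$ decrease to $(s-k/2)/(d-k/2)$. For $p=2$, part (1) reads $q\geq 2/\gamma=(2d-k)/(s-k/2)$.

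For (3) I use the final clause of Theorem \ref{nonlinearslices}(3) at $\theta=1/2$. Its hypothesis becomes $s-k/2<(d-k)\cdot 1$, which is exactly the stated assumption. Its conclusion becomes
\[
\fd\mu\leq 2(s-k/2)/1=2s-k.
\]
Concretely, the $d-k$ shortest sides of the cuboids are all comparable to $r$. The mass satisfies $\mu(Q)\gtrsim r^{t-k/2}$ for any $t>s$, and $t-k/2<d-k$, so Theorem \ref{fourier3} applies with $m=d-k$ provided we work at an exponent $2(t-k/2)$ slightly above $2s-k$. Letting $t\searrow s$ gives the bound. For the Apollonian packing, the gasket contains arcs of circles, so $d=2$, $k=1$ and $S$ a circle.

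The only real obstacle is the first step: justifying that $\as(X\cap S)$ equals $k$ uniformly at points of the piece. One also needs the sequence of balls used in Theorem \ref{nonlinearslices} to be centred in $S\cap X$. Choosing the centres inside the contained piece handles both issues. Everything after that is substitution.
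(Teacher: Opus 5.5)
Your proposal is correct and is exactly the paper's (implicit) argument: take $S$ to be the contained piece so that $\as(X\cap S)=k$, then specialise Theorem \ref{nonlinearslices} to $\theta=1/2$, where $2\theta\wedge 1=1$ gives $\gamma>(s-k/2)/(d-k/2)$, $\upsilon=d-k/2$, and $\fd\mu\le 2s-k$ under the hypothesis $s-k/2<d-k$. One small fix to your unpacking of (3): the result that yields the exponent $2(t-k/2)$ is Theorem \ref{fourier2} with $m=d-k$ (the $d-k$ shortest sides are all $\approx r$, $\mu(Q)\gtrsim r^{t-k/2}$, and $t-k/2<m$), not Theorem \ref{fourier3}, which as stated only gives $\fd\mu\le 2m=2(d-k)$; the paper's proof of Theorem \ref{nonlinearslices} contains the same mislabel, so citing its final clause as a black box is still fine.
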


The bound $2s-k$ is sharp since for the Surface measure on the sphere $S^{d-1} \subseteq \rd$ and $S$ the sphere itself, $s=k=d-1$ and $\fd \mu = d-1$.

\subsection{Flatness via large projections}

Another central problem in geometric measure theory is to understand how the dimensions of fractal sets behave under orthogonal projection. The seminal Marstrand--Mattila projection theorem tells us that the projection of   a Borel set   $X \subseteq \rd$ onto a $k$-dimensional subspace has  Hausdorff dimension $\hd X \wedge k$ almost surely with respect to the Grassmannian measure. For $V \in G(d,k)$, we write $P_V$ for orthogonal projection of $\rd$ onto $V$.  

Having a projection with large Hausdorff dimension does not give us quantitative flatness but, on the other hand, a projection with large Assouad spectrum means that a large amount of the set has projected into a small ball and the preimage of this small ball under the projection is a thin tube.  This information, one again coupled with information coming from the box dimension of the relevant measure which ensures that all balls carry a certain amount of mass, forces a lot of mass into a  small tube and this is the quantitative flatness we need.  This idea is made precise in the following result.

\begin{thm} \label{projections}
Suppose $X \subseteq \rd$ is compact and $\mu$ is a measure supported on $X$ and let $V \in G(d,k)$ for some $1 \leq k < d$. Let $\theta \in [0,1)$ be arbitrary.  Then $\mu$ is $(\gamma, k)$-flat for all
\[
\gamma > \frac{\ubd \mu -(1-\theta)\as P_V(X)}{k\theta}.
\]
Therefore:
\begin{enumerate}
\item If  the $E(p\to q)$ extension estimate 
\eqref{eq:extension} holds, then
\[
\frac{p}{q(p-1)} \leq \frac{\ubd \mu -(1-\theta)\as P_V(X)}{k\theta}.
\]
In particular, if $p=2$, then 
\[
q \geq \frac{2(k\theta)}{\ubd \mu -(1-\theta)\as P_V(X)}.
\]
\item If $\mu$  satisfies the $I(p \to q)$ improving estimate \eqref{improvingest}, then
\[
\frac{1}{p}-\frac{1}{q} \leq \frac{\ubd \mu -(1-\theta)\as P_V(X)}{k\theta} . 
\] 
\item If
\[
 \frac{\ubd \mu-\as P_V(X)(1-\theta) }{\theta} <k,
\]
then the estimate 
\[
\fd \mu \leq  \frac{2(\ubd \mu-\as P_V(X)(1-\theta) )}{\theta} 
\]
holds. 
\end{enumerate}
Since these estimates hold for all $V$ and $\theta$, in practise one would then optimise over $V$ and $\theta$ to obtain the best result. 
\end{thm}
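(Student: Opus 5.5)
The plan is to mirror the proof of Theorem \ref{slices}, with the roles of $V$ and $V^\perp$ swapped. First, fix $s<\as P_V(X)$ and $t>\ubd \mu$. By the definition of the Assouad spectrum there is a sequence $r\to 0$ together with balls $B'\subseteq V$ of radius $r^\theta$ such that $P_V(X)\cap B'$ contains an $r$-separated set $E$ with $\# E\gtrsim r^{s(\theta-1)}$. For each $y\in E$ choose $x_y\in X$ with $P_V(x_y)=y$. The balls $B(x_y,r/3)$ are pairwise disjoint, because their projections are disjoint. Each of them lies in the set $P_V^{-1}(B'_{r})$, which is the product of a ball of radius $\approx r^\theta$ in $V$ with $V^\perp$. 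Since $X$ is compact, we intersect with a bounded region and obtain a cuboid $Q$ with $k$ sides of length $\approx r^\theta$ in the directions of $V$ and $d-k$ sides of length $\approx 1$ in the directions of $V^\perp$. Then
\[
\mathcal{L}^d(Q)\approx r^{k\theta}.
\]
The definition of $\ubd\mu$ gives $\mu(B(x_y,r/3))\gtrsim r^t$ for $x_y\in X$, and summing over $y\in E$ yields
\[
\mu(Q)\gtrsim r^{s(\theta-1)+t}=\mathcal{L}^d(Q)^{\frac{t-s(1-\theta)}{k\theta}}.
\]

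Next we read off the flatness. The shortest side is $\delta=r^\theta$ and $\mathcal{L}^d(Q)\approx\delta^k$, so $\mu$ is $(\gamma,k)$-flat with $\gamma=(t-s(1-\theta))/(k\theta)$. Letting $s\nearrow\as P_V(X)$ and $t\searrow\ubd\mu$ gives every $\gamma$ above the stated threshold. Parts (1) and (2) then follow directly from Theorems \ref{knapp} and \ref{thm:improving}. For $\theta=0$ the expression is read as $+\infty$ (or is trivial), so we may assume $\theta>0$.

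For (3) we apply Theorem \ref{fourier3} with $m=k$. The $k$ shortest sides of $Q$ are all $\approx r^\theta$, and their product is $r^{k\theta}$. Writing $\beta=(t-s(1-\theta))/\theta$, we have $\mu(Q)\gtrsim (r^{\theta})^{\beta}$. The hypothesis $\beta<k$ (for $s,t$ close to the limits) makes $\mu(Q)/r^{k\theta}\to\infty$, which immediately gives the crude bound $\fd\mu\le 2k$. To reach $2\beta$ we instead use Theorem \ref{fourier2}, with $m=k$ comparable shortest sides and $\upsilon\gamma=\beta<k$. This gives $\fd\mu\le 2\upsilon\gamma=2\beta$, and letting $s,t$ tend to their limits completes the argument. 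Equivalently, one can repeat the Mitsis projection argument from Theorem \ref{fourier2} directly: $P_V\mu(B')\lesssim r^{\theta\min\{u/2,k\}}$ for $u<\fd\mu$.

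The main obstacle is the bookkeeping of scales. In particular, the lift of points of $P_V(X)$ to points of $X$ must genuinely produce disjoint $r$-balls inside the tube, which is what makes $\ubd\mu$ usable. We also need to check that the uniformity in Mitsis' bound over the varying orientation is not an issue here, and it is not, since $V$ is fixed.
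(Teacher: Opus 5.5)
Your proposal is correct and follows the paper's proof essentially step for step. You lift an $r$-separated subset of $P_V(X)\cap B(y,r^\theta)$ to $X$ and trap the resulting disjoint $r$-scale balls, each of mass $\gtrsim r^t$ for $t>\ubd \mu$, in a truncated tube of volume $\approx r^{k\theta}$ whose $k$ shortest sides are $\approx r^\theta$. You then read off (1)--(2) from Theorems \ref{knapp} and \ref{thm:improving} and (3) from Theorem \ref{fourier2} with $m=k$, exactly as the paper does; your detour through Theorem \ref{fourier3} is harmless but unnecessary.

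As in the paper, the mass lower bound tacitly requires the lifted points to lie in the support of $\mu$, i.e.\ that $\mu$ is fully supported on $X$. Your remark that $\theta=0$ is degenerate is a fair observation the paper does not make.
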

\begin{proof}
We may assume without loss of generality that $X \subseteq B(0,1)$. Let $s< \as P_V(X)$ and then by definition there exists a sequence of scales $r \to 0$ and associated  cubes $Q' \subseteq V$ with sidelengths $r^\theta$ such that we may find a $2r$-separated subset $E'$ of $P_V(X) \cap Q'$ of size at least $\gtrsim r^{s(\theta-1)}$.  Let $Q$ be a cuboid of diameter  $\approx 1$ and volume 
\[
\mathcal{L}^{d}(Q) \approx   r^{k\theta}
\] 
which contains  the $r$-neighbourhood of the  truncated tube
\[
  P_V^{-1}(Q'_r) \cap B(0,1).
\]
For each point in $x \in E'$, choose a point in $P_V^{-1}(x) \cap X$ and denote this new set by $E$ noting that the points in $E$ are still $r$-separated since projection is $1$-Lipschitz.  We may find a collection of disjoint $r$-balls centred at each point of $E$ which lie completely inside $Q$.  Let $t> \ubd \mu$ and then the $\mu$ mass of each of these $r$-balls is at least 
\[
\gtrsim r^{t}
\]
and therefore
\[
\mu(Q)  \gtrsim r^{s(\theta-1)+t}  = \mathcal{L}^{d}(Q)^{\frac{s(\theta-1)+t}{k\theta}}.
\]
In particular, $\mu$ is $(\gamma, \upsilon)$-flat for 
\[
\gamma = \frac{s(\theta-1)+t}{k\theta}
\]
and $\upsilon = k$ and letting $s \nearrow \as P_V(X)$ and $t \searrow \ubd \mu$ gives the desired flatness estimate.  The estimates  in (1) and (2) then follow from Theorems \ref{knapp} and \ref{thm:improving}, respectively.

We now turn our attention to the Fourier dimension and (3).  For this we need to be in a position to apply Theorem \ref{fourier2} and for this we need the additional assumption from (3). The shortest $k$ sides of $Q$ are all  of length $\approx r^\theta$ and   so, provided
\[
\frac{t-s(1-\theta) }{k\theta} k = \frac{t-s(1-\theta) }{\theta} < k
\]
which we can ensure by assumption,  we may apply Theorem \ref{fourier2} to obtain
\[
\fd \mu \leq 2 \frac{t-s(1-\theta) }{k\theta} k   = \frac{2(t-s(1-\theta) )}{\theta} 
\]
and letting $s \nearrow \as P_V(X)$ and $t \searrow \ubd \mu$ completes the proof of (3).
\end{proof}

In order to obtain strong results we are looking for a measure with small box dimension supported on a set which has  a projection with large Assouad spectrum.  In the extreme case, when the former is as small as possible and the latter is as large as possible, we obtain   optimal estimates.  Recall that we may always choose $\mu$ fully supported on $X$ with $\ubd \mu = \ubd X  $ and so if a set has a projection with maximal Assouad spectrum then there always exists a very natural measure on the set with Fourier dimension 0 and no restriction estimates.

\begin{cor} \label{projgive0}
Suppose $X \subseteq \rd$ is compact and $\mu$ is a measure supported on $X$ with $\ubd \mu = \ubd X  $.  Suppose there exists    $V \in G(d,k)$ for some $1 \leq k < d$  and $\theta \in (0,1)$ such that 
\[
\as P_V(X) = \frac{\ubd P_V(X)}{1-\theta} = \frac{\ubd X}{1-\theta}.
\]
Then $\mu$ is $\gamma$-flat for all $\gamma>0$,  there are no non-trivial  extension or $L^p$-improving estimates   for $\mu$,   and $\mu$ is not $L^2$-flattening. Moreover, $\fd \mu = 0$, the Frostman and Sobolev dimensions of $\mu$ are at most $d-1/2$ and 
\[
\fs \mu \leq  (d-1/2) \, \theta 
\]
for all $0 \leq \theta \leq 1$.
\end{cor}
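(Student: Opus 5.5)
The plan is to apply Theorem \ref{projections} with the given $V$ and $\theta$, and read off all the consequences from results already established. First I use the hypothesis $\ubd \mu = \ubd X$ together with
\[
(1-\theta)\as P_V(X) = \ubd X.
\]
Plugging these into the flatness exponent of Theorem \ref{projections} gives
\[
\frac{\ubd \mu -(1-\theta)\as P_V(X)}{k\theta} = \frac{\ubd X - \ubd X}{k\theta} = 0.
\]
Hence $\mu$ is $(\gamma,k)$-flat for every $\gamma>0$, and in particular $\gamma$-flat for all $\gamma>0$. The only thing to check here is that $\theta \in (0,1)$, so that $k\theta>0$ and the theorem applies. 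Note that the hypothesis also forces $\ubd P_V(X) = \ubd X$; this is only used implicitly, through the equality of the Assouad spectrum with the upper bound $\ubd P_V(X)/(1-\theta)$ in \eqref{basicbound}.

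With arbitrary flatness in hand, I obtain each consequence as follows.

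For extension estimates, Theorem \ref{knapp} gives $p/(q(p-1)) \leq \gamma$ for all $\gamma>0$, so $p/(q(p-1))=0$. This forces $p=\infty$ or $q=\infty$, or degenerately $q'=1$, which are precisely the trivial estimates discussed after Corollary \ref{unirest}.

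For improving estimates, Theorem \ref{thm:improving} gives $1/p-1/q\leq 0$. Equivalently, Corollary \ref{noimpro} applies directly.

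For the Fourier dimension, part (3) of Theorem \ref{projections} applies, since its hypothesis reads $0<k$. This gives $\fd \mu \leq 0$. Alternatively, Corollary \ref{fouriernofrost} gives $\fd\mu \leq 2\gamma d$ for all $\gamma>0$, but that route requires $\mu$ to be supported on a Lebesgue null set, so I prefer part (3).

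Finally, Theorem \ref{flattening} applies verbatim to a measure that is $\gamma$-flat for all $\gamma>0$. It yields $\fs\mu \leq (d-1/2)\theta$, the bounds of $d-1/2$ on the Frostman and Sobolev dimensions, and that $\mu$ is not $L^2$-flattening.

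The main obstacle I anticipate is purely notational: the symbol $\theta$ plays two roles, as the fixed Assouad spectrum parameter in the hypothesis and as the free Fourier spectrum variable in the conclusion. I will rename the latter (say to $\theta'$) when invoking Theorem \ref{flattening} to avoid confusion. Beyond that, the argument is a direct substitution into the earlier theorems, mirroring how Corollary \ref{slicegive0} follows from Theorem \ref{slices} and Theorem \ref{flattening}.
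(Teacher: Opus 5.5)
Your proposal is correct and follows the paper's route: the numerator in Theorem \ref{projections} vanishes, giving $(\gamma,k)$-flatness for every $\gamma>0$, and the remaining conclusions come from Theorems \ref{knapp} and \ref{thm:improving}, part (3) of Theorem \ref{projections}, and Theorem \ref{flattening}. Like the paper, this tacitly needs $\mu$ to be fully supported on $X$, so that the fibre points chosen in the proof of Theorem \ref{projections} carry mass $\gtrsim r^t$. One slip: $p/(q(p-1))=0$ forces only $q=\infty$ (i.e.\ $q'=1$), because at $p=\infty$ the Knapp bound reads $1/q\leq\gamma$; so $p=\infty$ with finite $q$ is excluded rather than trivial, as it should be, since $E(\infty\to q)$ with $q<\infty$ is a genuinely non-trivial estimate.
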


Next we present a modest but perhaps surprising application to a well-known problem concerning the dimensions of self-similar sets.  Recall that a self-similar set is a non-empty  set $X \subseteq \rd$ which is invariant under an iterated function system (IFS) of similarity maps.  See \cite{barany, Fal03} for more details on self-similar sets and their dimension theory.  It is known that if the open set condition is satisfied, then the Assouad, box and Hausdorff dimensions of $X$ coincide and are given by a simple closed formula (the Hutchinson--Moran formula). However, if the open set condition fails then there are two rather different regimes. If the weak separation condition holds, then these three dimensions coincide, but if the weak separation condition fails, then the Assouad dimension may be strictly larger.  Nevertheless, it is widely believed that without any separation assumptions the Assouad spectrum remains equal to the box dimension for all $\theta$. This is known for certain cases, for example, if the self-similar set is in the line, the defining IFS is algebraic, and there are no exact overlaps.  A modest step in the direction of this conjecture would be to bound the Assouad spectrum above by something strictly smaller than the general upper bound coming from the box dimension, that is, 
\[
\as X \leq \frac{\ubd X}{1-\theta}.
\]
We achieve this using our quantitative flatness assumption combined with known results about the $L^2$-flattening nature  of self-similar measures.  The result applies to self-similar sets whose Hausdorff dimension is given by the Hutchinson--Moran formula.  The interesting case is when the open set condition fails and so we are interested in the situation when there are no exact overlaps and the exact overlaps conjecture holds. We can handle more general classes than this with the following method but we leave the details to the reader.  The important thing is that the dimension is obtained as the sequence of solutions to Hutchinson--Moran formulae applied to higher iterates of the defining IFS which account for exact overlaps.

The result below can be obtained   by more direct methods, see \cite{jon:book}. We include the result  because its  proof  involves a simple and novel synthesis of modern tools.

\begin{cor}
Let $X \subseteq \rd$ be a self-similar set whose Hausdorff dimension is given by the Hutchinson--Moran formula.  Then for all $\theta \in (0,1)$
\[
\as X < \frac{\ubd X}{1-\theta}.
\]
\end{cor}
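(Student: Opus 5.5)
Suppose for contradiction that $\as X = \ubd X/(1-\theta)$ for some $\theta \in (0,1)$. The plan is to feed this extremal Assouad spectrum into the projection/slice machinery, conclude that a natural self-similar measure on $X$ is $\gamma$-flat for all $\gamma>0$, and then contradict known $L^2$-flattening results for self-similar measures. The cleanest route is to view $X$ as its own projection onto $V=\rd$. Since Theorem \ref{projections} requires $k<d$, I would instead use the isotropic estimate directly, or embed $X \subseteq \rd \times \{0\} \subseteq \mathbb{R}^{d+1}$ and take $V = \rd \times\{0\}$ with $k=d$. Then $P_V(X)=X$, and Corollary \ref{projgive0} applies provided we have a measure $\mu$ on $X$ with $\ubd \mu = \ubd X$.

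\textbf{Choice of measure.} Because the Hausdorff dimension is given by the Hutchinson--Moran formula, I would take $\mu$ to be the natural self-similar measure with weights $r_i^{s}$, where $s=\hd X=\bd X$ and the $r_i$ are the contraction ratios. For this measure every ball $B(x,r)$ centred in $X$ contains a cylinder of diameter $\approx r$ and mass $\approx r^{s}$, so $\ubd \mu \le s = \ubd X$; the general inequality $\ubd X\le\ubd\mu$ gives equality. This step uses only the similarity structure and the dimension formula, not any separation condition.

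\textbf{Flatness and the contradiction.} With $\as X = \ubd X/(1-\theta)$, the proof of Theorem \ref{projections} produces, for every $s'<\as X$ and $t>\ubd\mu$, cuboids $Q$ (here a slab of thickness $\approx r$ in $\mathbb{R}^{d+1}$ over an $r^\theta$-cube) with $\mu(Q)\gtrsim \mathcal{L}^{d+1}(Q)^{\gamma}$, where
\[
\gamma=\frac{t-s'(1-\theta)}{d\theta+1}.
\]
As $s' \nearrow \as X$ and $t\searrow \ubd \mu$, the numerator tends to $0$, so the embedded $\mu$ is $\gamma$-flat for all $\gamma>0$. Theorem \ref{flattening} then gives $\fs\mu\le (d+1-1/2)\theta$, so $\mu$ is not $L^2$-flattening. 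Since being $L^2$-flattening is preserved under the trivial embedding (the Fourier transform simply becomes constant in the extra variable), this transfers back to $\rd$. An alternative is to run the isotropic argument of Theorem \ref{flattening} directly in $\rd$, using cubes of side $r^\theta$ with mass $\gtrsim r^{\theta\cdot\text{small}}$.

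\textbf{Main obstacle.} The hard step is importing a theorem that says non-degenerate self-similar measures are $L^2$-flattening, for instance in the spirit of \cite{rossi} or results of Shmerkin and others. Such results need the measure not to be supported on a hyperplane, or they need an irrationality or non-atomic hypothesis. I would handle the hyperplane case by restricting to the affine span of $X$. The trivial-embedding step may clash with these hypotheses, which is why I would prefer to rerun the argument of Theorem \ref{flattening} intrinsically in the span. If $X$ is a single point or is degenerate, the inequality is immediate.
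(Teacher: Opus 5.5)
\textbf{The main route fails at the trivial embedding.} Put $X$ in $\rd\times\{0\}\subseteq\mathbb{R}^{d+1}$ and call the resulting measure $\tilde\mu$. It is supported on a hyperplane, so it is $0$-flat for trivial reasons: slabs of thickness $\delta\to0$ around $\rd\times\{0\}$ carry all of its mass. The flatness you extract from the extremal Assouad spectrum therefore says nothing. Likewise $\widehat{\tilde\mu}(\xi,\eta)=\widehat{\mu}(\xi)$ is constant in $\eta$, so $\|\widehat{\tilde\mu}\|_{L^p(B(0,R))}^p\gtrsim R$ for every $p$. Hence $\tilde\mu$ is \emph{never} $L^2$-flattening, whatever $\mu$ is. The transfer principle you invoke is backwards: $L^2$-flattening is destroyed, not preserved, by the trivial embedding. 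So ``$\tilde\mu$ is not $L^2$-flattening'' tells you nothing about $\mu$.

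Your fallback of working intrinsically in $\rd$ (or in the affine span of $X$) does correctly show that the natural measure $\mu$ on $X$ is $\gamma$-flat for all $\gamma>0$. By Theorem \ref{flattening} it is then not $L^2$-flattening. But the contradiction needs a theorem saying this $\mu$ \emph{is} $L^2$-flattening, which you flag as the main obstacle and leave unresolved. Without a separation condition you have no doubling or Ahlfors regularity for $\mu$, and that is exactly what is used to verify Khalil's uniform affine non-concentration. As written, the proof is incomplete.

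\textbf{The missing idea in the paper's proof is to lift rather than embed flatly.} Extend the IFS to similarities on $\rd\times\mathbb{R}^n$ whose first $d$ coordinates are the original maps. Choose them so that the attractor $Y$ satisfies the open set condition, has the same similarity dimension, and is not contained in any hyperplane. Then:
\begin{itemize}
\item $X=P(Y)$ is a genuine projection with $k=d<d+n$;
\item the natural measure on $Y$ is Ahlfors regular with $\ubd\mu=\ubd Y=\ubd X$;
\item Corollary \ref{projgive0} makes it $\gamma$-flat for all $\gamma>0$, which is now a non-vacuous statement because $Y$ spans;
\item Ahlfors regularity, self-similarity and non-degeneracy give uniform affine non-concentration, hence $L^2$-flattening \cite{khalil}, contradicting Theorem \ref{flattening}.
\end{itemize}

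\textbf{A cheaper way to close your intrinsic route.} There the witnessing sets are cubes of side $\approx r^\theta$ with $\mu$-mass $\gtrsim r^{t-s'(1-\theta)}$. The exponent can be made arbitrarily close to $0$, which says $\frd\mu=0$. This contradicts the standard fact that a self-similar measure which is not a point mass satisfies $\mu(B(x,r))\lesssim r^{\alpha}$ for some $\alpha>0$. No $L^2$-flattening input is needed.

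\textbf{The single-point case is not ``immediate''.} For a one-map IFS, $\as X=\ubd X=0$ and the strict inequality fails, so that case must be excluded rather than handled.
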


\begin{proof}
Suppose the conclusion is not satisfied, and therefore we may find $\theta \in (0,1)$ such that 
\[
\as X = \frac{\ubd X}{1-\theta}.
\]
Let $Y \subseteq \rd \times \mathbb{R}^n$  be a self-similar lift of $X$ satisfying the open set condition and such that $Y$ is not contained in any hyperplane. Since the dimension of $X$ is given by the Hutchinson--Moran formula, we can ensure that the dimension of $Y$ is given by the same formula and therefore $\ubd X = \ubd Y$.   Let $\mu$ be the natural self-similar measure on $Y$ which is Ahlfors regular, in particular, having $\ubd \mu = \ubd X$. Moreover, by construction the projection $P$ onto the first $d$ coordinates satisfies
\[
\as P(Y) = \as X = \frac{\ubd X}{1-\theta} = \frac{\ubd Y}{1-\theta}  .
\]
  Then, by Corollary \ref{projgive0}, $\mu$ is $\gamma$-flat for all $\gamma>0$ and so by Theorem \ref{flattening}
\[
\fs \mu \leq (d+n-1/2) \theta
\]
for all $\theta \in [0,1]$. However, this contradicts the fact that $\mu$ is uniformly affinely non-concentrated in the sense of Khalil \cite{khalil}.  This follows by direct calculation relying on the fact that it is not supported in a line and it is doubling (even Ahlfors regular).  In particular, being uniformly affinely non-concentrated ensures that the Sobolev dimension of the iterated convolutions $\mu^{\ast k}$ converges to the ambient spatial dimension $d+n$ \cite{khalil}.  This, combined with \cite[Theorem 6.1]{Fra24}, this yields
\[
\liminf \frac{\fs \mu}{(d+n) \theta} \geq 1  .
\]
This completes the proof.
\end{proof}

\section{Examples and specific constructions}

\subsection{Self-affine sets}

One of the most important ways to generate fractals sets, especially with a dynamical flavour, is via iterated function systems (IFSs).  An IFS is a (usually finite) collection of contractions acting on  a common compact set $K \subseteq \rd$. Such a collection defines a unique attractor which is often fractal.  Here we focus on the case where the contractions are affine maps $\{x \mapsto A_i(x) + t_i\}_{i\in \mathcal{I}}$ with each $A_i$ an invertible matrix and each $t_i$ a translation. We write $S_i$ for the map $S_i(x) = A_i(x)+t_i$.   The associated \emph{attractor} or \emph{self-affine set}  is the unique non-empty compact set $X \subseteq \rd$ satisfying
\[
X = \bigcup_{i \in \mathcal{I}} \left(A_i(X)+t_i\right).
\]
There is a natural surjection from the shift space $\mathcal{I}^\mathbb{N}$  to the attractor (the coding map) and the dynamics generated by the IFS is closely related to the dynamics on the shift space given by the left shift.  We consider (in the first instance not necessarily invariant) measures $\nu$ on the shift space and their pushforwards onto $X$ which we denote by $\mu$.  These are our measures of interest.  For example, if $\nu$ is a Bernoulli measure, then the associated $\mu$ is a self-affine measure.  We refer the reader to \cite{barany,Fal03} for more background on self-affine sets and measures.

We use standard symbolic notation to describe the $\mu$.  We write $\mathcal{I}^*$ for the set of all finite words over $\mathcal{I}$ and for $\mathbf{i}=(i_1, i_2, \dots, i_k)   \in \mathcal{I}^*$ we write $1 \geq \alpha_1(\mathbf{i}) \geq  \alpha_2(\mathbf{i})  \geq \cdots \geq  \alpha_d(\mathbf{i}) >0$ for the \emph{singular values} of the matrix $A_\mathbf{i} = A_{i_1} \circ \cdots \circ  A_{i_k}$.  We also write $[\mathbf{i}] \subseteq  \mathcal{I}^\mathbb{N}$ for the cylinder consisting of all infinite words beginning with $\mathbf{i}$.  Note that the image of $[\mathbf{i}]$ under the  coding map in $X$ is contained in the geometric cylinder $S_\mathbf{i}(X)$ where $S_\mathbf{i} = S_{i_1} \circ \cdots \circ S_{i_k}$. Once again, we write $a \wedge b = \min\{a,b\}$.

In most situations, the geometric cylinders $S_\mathbf{i}(X)$ become very stretched and flattened as the length of $\mathbf{i}$ increases.  We use this to force quantitative flatness of the relevant measure.

\begin{thm} \label{affine}
Let $\mu$ be the pushforward  of any measure $\nu$ on the shift space onto a self-affine set.  Let $\mathbf{i} \in \mathcal{I}^\mathbb{N}$ be arbitrary. Then $\mu$ is $\gamma$-flat for all
\[
\gamma> \liminf_{k \to \infty}    \frac{\log \nu([\mathbf{i}_k])}{\log \textup{det}(A_{\mathbf{i}_k})} 
\] 
 where   $\mathbf{i}_k \in \mathcal{I}^k$ denotes the  word consisting of the first $k$-entries of $\mathbf{i}$.   Moreover, $\mu$ is $(\gamma,\upsilon)$-flat for $\gamma$ and $\upsilon$ with the product $\upsilon \gamma$ larger than but arbitrarily close to
\[
 \liminf_{k \to \infty} \frac{\log \nu([\mathbf{i}_k])}{\log \alpha_d(\mathbf{i}_k)}.
\]
Therefore:
\begin{enumerate}
\item If the   $E(p\to q)$ extension estimate 
\eqref{eq:extension} holds for $\mu$, then
\[
\frac{p}{q(p-1)} \leq  \liminf_{k \to \infty}   \frac{\log \nu([\mathbf{i}_k])}{\log \textup{det}(A_{\mathbf{i}_k})} 
\]
and, for $p=2$, 
\[
q \geq 2 \limsup_{k \to \infty}    \frac{\log \textup{det}(A_{\mathbf{i}_k})} {\log \nu([\mathbf{i}_k])} .
\]
\item If $\mu$  satisfies the $I(p \to q)$ improving estimate \eqref{improvingest}, then
\[
\frac{1}{p}-\frac{1}{q} \leq  \liminf_{k \to \infty}   \frac{\log \nu([\mathbf{i}_k])}{\log \textup{det}(A_{\mathbf{i}_k})}. 
\] 
\item Provided 
\[
 \liminf_{k \to \infty} \frac{\log \nu([\mathbf{i}_k])}{\log \alpha_d(\mathbf{i}_k)}  <1
\]
then
\[
\fd \mu \leq 2 \liminf_{k \to \infty} \frac{\log \nu([\mathbf{i}_k])}{\log \alpha_d(\mathbf{i}_k)}  .
\]
\end{enumerate}
\end{thm}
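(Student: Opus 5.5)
The natural witnessing cuboids are the geometric cylinders themselves. Since $X$ is compact, fix a ball $B(0,R)\supseteq X$, or a cube $K$ containing $X$. For each $k$, the set $S_{\mathbf{i}_k}(K)$ is the image of $K$ under an affine map with linear part $A_{\mathbf{i}_k}$. By the singular value decomposition, $A_{\mathbf{i}_k}(B(0,R))$ is an ellipsoid with semi-axes $R\alpha_1(\mathbf{i}_k),\dots,R\alpha_d(\mathbf{i}_k)$. Hence $S_{\mathbf{i}_k}(X)$ is contained in a cuboid $Q_k$, oriented along the singular directions and centred at $S_{\mathbf{i}_k}(0)$, with sidelengths $2R\alpha_j(\mathbf{i}_k)$. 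I will take the orthogonal part of the SVD in $SO(d)$, adjusting a sign if necessary, which changes nothing. Then
\[
\mathcal{L}^d(Q_k)\approx \prod_j \alpha_j(\mathbf{i}_k)=|\det A_{\mathbf{i}_k}|,
\]
which tends to $0$ because the maps are contractions. The shortest side is $\delta_k\approx \alpha_d(\mathbf{i}_k)$. Since the image of $[\mathbf{i}_k]$ under the coding map lies in $S_{\mathbf{i}_k}(X)\subseteq Q_k$, we get $\mu(Q_k)\geq \nu([\mathbf{i}_k])$.

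For $\gamma$ strictly bigger than the stated liminf $L$, pass to a subsequence along which $\log\nu([\mathbf{i}_k])/\log\det A_{\mathbf{i}_k}<\gamma$. Since $\log\det<0$, this gives $\nu([\mathbf{i}_k])\geq (\det A_{\mathbf{i}_k})^{\gamma}\gtrsim \mathcal{L}^d(Q_k)^\gamma$, so $\mu$ is $\gamma$-flat. If $L\geq 1$ the claim is trivial, since all measures are $1$-flat. For $(\gamma,\upsilon)$-flatness, define $\upsilon_k=\log\mathcal{L}^d(Q_k)/\log\delta_k\in[1,d]$ up to $o(1)$ corrections from the constants. Along a subsequence realising $L'=\liminf \log\nu([\mathbf{i}_k])/\log\alpha_d(\mathbf{i}_k)$, pass to a further subsequence on which $\upsilon_k\to\upsilon_\infty\in[1,d]$. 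Take $\upsilon$ slightly larger than $\upsilon_\infty$, so that $\mathcal{L}^d(Q_k)\gtrsim\delta_k^{\upsilon}$, and take $\gamma=(L'+\eta)/\upsilon$. Then $\mu(Q_k)\geq\nu([\mathbf{i}_k])\gtrsim \delta_k^{L'+\eta}$. Comparing this with $\mathcal{L}^d(Q_k)^\gamma\approx\delta_k^{\upsilon_k\gamma}$ and using $\upsilon_k\gamma\geq L'+\eta/2$ eventually gives $\mu(Q_k)\gtrsim\mathcal{L}^d(Q_k)^\gamma$. So $\upsilon\gamma$ is arbitrarily close to $L'$, as claimed. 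The main technical care is in these $\eps$-bookkeeping steps: the subsequence choices and the fact that the constants in $\lesssim$ must not depend on $k$, which holds because $R$ is fixed.

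Parts (1) and (2) then follow immediately from Theorem \ref{knapp} and Theorem \ref{thm:improving} applied with $\gamma\searrow L$. For (3), apply Theorem \ref{fourier2} with $m=1$. The hypothesis about the shortest side being comparable to itself is vacuous, and the condition $\upsilon\gamma<1$ holds once $\upsilon\gamma$ is chosen close enough to $L'<1$. This gives $\fd\mu\leq 2\upsilon\gamma$, and letting $\upsilon\gamma\searrow L'$ yields the bound. One subtlety is that Theorem \ref{fourier2} only uses the projection onto the shortest direction and the uniformity of Mitsis' constants, both of which are already handled there. So I expect no obstacle beyond ensuring that the orientation of $Q_k$ is allowed to vary with $k$, which Definition \ref{quantflat} permits.
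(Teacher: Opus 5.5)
Your proposal is correct and follows the paper's proof. You enclose $S_{\mathbf{i}_k}(X)$ in a cuboid with sides $\approx\alpha_j(\mathbf{i}_k)$ so that $\mu(Q_k)\geq\nu([\mathbf{i}_k])$, then invoke Theorems \ref{knapp}, \ref{thm:improving} and \ref{fourier2} with $m=1$, and your extra subsequence along which $\log\det(A_{\mathbf{i}_k})/\log\alpha_d(\mathbf{i}_k)$ converges makes precise the paper's terse ``we may assume'' step.

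There is one small bookkeeping slip. Since $\delta_k<1$, the bound $\nu([\mathbf{i}_k])\gtrsim\delta_k^{L'+\eta}$ together with $\upsilon_k\gamma\geq L'+\eta/2$ does not yield $\nu([\mathbf{i}_k])\gtrsim\delta_k^{\upsilon_k\gamma}$. Instead use $\nu([\mathbf{i}_k])\gtrsim\delta_k^{L'+\eta/4}$, which holds eventually along your subsequence, and choose $\upsilon$ close enough to $\upsilon_\infty$ (depending on $\eta$, and capped at $d$) that $\upsilon_\infty\gamma>L'+\eta/2$.
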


\begin{proof}
We may assume without loss of generality that each map in the defining IFS maps the unit ball $B(0,1)$ into itself, and thus $X \subseteq B(0,1)$.  Let $k$ be large and such that 
\[
\gamma> \frac{\log \nu([\mathbf{i}_k])}{\log \textup{det}(A_{\mathbf{i}_k})} . 
\]
Let $Q \subseteq \rd$ be the natural cube of sidelengths given by twice the singular values of $A_{\mathbf{i}_k}$ containing the geometric cylinder $S_{\mathbf{i}_k}(B(0,1))$.  Then
\[
\mathcal{L}^d(Q) = 2^d \alpha_1(\mathbf{i}_k) \alpha_2(\mathbf{i}_k)   \cdots \alpha_d(\mathbf{i}_k)  = 2^d \textup{det}(A_{\mathbf{i}_k})
\]
and 
\[
\mu(Q) \geq  \mu(S_{\mathbf{i}_k}(B(0,1)) ) \geq  \nu([\mathbf{i}_k]) =  \mathcal{L}^d(Q) ^{\frac{\log \nu([\mathbf{i}_k])}{\log (2^d\textup{det}(A_{\mathbf{i}_k}))}} \gtrsim \mathcal{L}^d(Q) ^{\gamma}.
\]
The desired $\gamma$-flatness follows and the estimates in (1) and (2) then follow from Theorems \ref{knapp} and \ref{thm:improving}, respectively.

Turning our attention to $(\gamma,\upsilon)$-flatness and the estimates in (3), let   $k$ be large and such that 
\[
\upsilon \gamma>\frac{\log \nu([\mathbf{i}_k])}{\log \alpha_d(\mathbf{i}_k)} =\frac{\log \nu([\mathbf{i}_k])}{\log \textup{det}(A_{\mathbf{i}_k})} \frac{\log \textup{det}(A_{\mathbf{i}_k})}{\log \alpha_d(\mathbf{i}_k)}  
\]
and we may assume that 
\[
  \gamma> \frac{\log \nu([\mathbf{i}_k])}{\log \textup{det}(A_{\mathbf{i}_k})}  
\]
and
\[
\upsilon  >  \frac{\log \textup{det}(A_{\mathbf{i}_k})}{\log \alpha_d(\mathbf{i}_k)}  .
\]
Constructing $Q$ as above, $\mu$ is $\gamma$-flat and, moreover, since the shortest side of $Q$ is of length $\approx \alpha_d(\mathbf{i})$  and
\[
\mathcal{L}^d(Q) =   2^d \textup{det}(A_{\mathbf{i}_k}) =
\alpha_d(\mathbf{i})^{\frac{\log (2^d\textup{det}(A_{\mathbf{i}_k}))}{\log \alpha_d(\mathbf{i})}} \gtrsim \alpha_d(\mathbf{i})^{\upsilon}
\]
the desired  $(\gamma,\upsilon)$-flatness follows.   The Fourier dimension bound in (3) then follows from Theorem \ref{fourier2}.  We are forced to apply this in the case $m=1$ and for this we need $\upsilon \gamma<1$, however, this can be ensured by the additional assumption.
\end{proof}

The previous theorem was rather general since it concerned any measure $\nu$ and we were forced to carefully optimise the product $\upsilon \gamma$ instead of optimising $\gamma$ and $\upsilon$ separately.  Next we provide some more digestible statements.  If the measure $\nu$ is ergodic, then we know the $\nu$-typical behaviour of the relevant limits in the above result.  This allows us to give cleaner bounds in terms of well-known quantities, although these bounds may not be the best possible.  

\begin{cor}\label{ergodic}
Let $\mu$ be the projection of an ergodic measure $\nu$ on the shift space onto a self-affine set.  Then $\mu$ is $(\gamma, \upsilon)$-flat for all
\[
\gamma >\frac{H(\mu)}{\chi_1(\mu)  \cdots  \chi_d(\mu)}
\]
and
\[
\upsilon >  \frac{\chi_1(\mu)  \cdots  \chi_d(\mu)} {\chi_d(\mu)}.
\]
where $H(\mu) \geq 0$ is the Shannon entropy of $\nu$ and $1 \leq \chi_1(\mu) \leq  \cdots \leq  \chi_d(\mu)$ are the Lyapunov exponents of $\mu$.   Therefore:
\begin{enumerate}
\item If the   $E(p\to q)$ extension estimate \eqref{eq:extension} holds for $\mu$, then
\[
\frac{p}{q(p-1)} \leq  \frac{H(\mu)}{\chi_1(\mu)  \cdots  \chi_d(\mu)}
\]
and, for $p=2$, 
\[
q \geq \frac{2 \chi_1(\mu)  \cdots  \chi_d(\mu)}{H(\mu)}.
\]
\item If $\mu$  satisfies the $I(p \to q)$ improving estimate \eqref{improvingest}, then
\[
\frac{1}{p}-\frac{1}{q} \leq \frac{H(\mu)}{\chi_1(\mu)  \cdots  \chi_d(\mu)} . 
\] 
\item If $H(\mu)<\chi_d(\mu)$, then
\[
\fd \mu \leq 2 \frac{H(\mu)}{\chi_d(\mu)}.
\]
\end{enumerate}
\end{cor}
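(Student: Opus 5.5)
The plan is to deduce this from Theorem \ref{affine} by choosing a $\nu$-typical infinite word $\mathbf{i}$ and evaluating the liminfs there with the Shannon--McMillan--Breiman theorem and the Oseledets (or Kingman subadditive) ergodic theorem. Theorem \ref{affine} applies to an arbitrary $\mathbf{i}$, so one typical word is enough. For $\nu$-almost every $\mathbf{i}$,
\[
-\frac{1}{k}\log \nu([\mathbf{i}_k]) \to H(\mu).
\]
Applying Kingman's theorem to the subadditive cocycles $\log \|A_{\mathbf{i}_k}\|$ and $\log \|(A_{\mathbf{i}_k})^{-1}\|$ (and to the exterior powers, since $|\det|$ is multiplicative) gives, again for $\nu$-almost every $\mathbf{i}$,
\[
-\frac{1}{k}\log \alpha_j(\mathbf{i}_k) \to \chi_j(\mu)
\]
for each $j$. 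I interpret the exponents here in the (sign-adjusted) sense used in the statement, so that $\chi_d(\mu)$ governs the smallest singular value $\alpha_d$. Intersecting these full-measure sets gives a single word $\mathbf{i}$ at which all the limits exist.

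At this $\mathbf{i}$ the ratios in Theorem \ref{affine} are genuine limits. I read the product $\chi_1(\mu)\cdots\chi_d(\mu)$ in the statement as the exponent of the determinant, that is, the $\chi$-sum coming from $\log \det = \sum_j \log \alpha_j$. With that reading,
\[
\frac{\log \nu([\mathbf{i}_k])}{\log \det(A_{\mathbf{i}_k})} \to \frac{H(\mu)}{\chi_1(\mu)\cdots\chi_d(\mu)}
\qquad \text{and} \qquad
\frac{\log \nu([\mathbf{i}_k])}{\log \alpha_d(\mathbf{i}_k)} \to \frac{H(\mu)}{\chi_d(\mu)}.
\]
The first limit gives $\gamma$-flatness for every $\gamma$ strictly above $H(\mu)/(\chi_1(\mu)\cdots\chi_d(\mu))$. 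For $\upsilon$, I would repeat the construction in the proof of Theorem \ref{affine} with the cuboid $Q$ of sidelengths $2\alpha_j(\mathbf{i}_k)$. Its volume is $2^d\det(A_{\mathbf{i}_k})$ and its shortest side is $\approx \alpha_d(\mathbf{i}_k)$, so $\mathcal{L}^d(Q) \gtrsim \alpha_d(\mathbf{i}_k)^{\upsilon}$ holds for all large $k$ once
\[
\upsilon > \lim_{k \to \infty} \frac{\log \det(A_{\mathbf{i}_k})}{\log \alpha_d(\mathbf{i}_k)} = \frac{\chi_1(\mu)\cdots\chi_d(\mu)}{\chi_d(\mu)}.
\]
Finitely many small $k$ can be discarded because only a sequence of cuboids is needed. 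Both conditions then hold simultaneously along a single tail of $k$, which gives the $(\gamma,\upsilon)$-flatness claim.

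Parts (1) and (2) then follow from parts (1) and (2) of Theorem \ref{affine}, or directly from Theorems \ref{knapp} and \ref{thm:improving}, by letting $\gamma$ decrease to the limiting value. For (3), the hypothesis $H(\mu)<\chi_d(\mu)$ says exactly that $\lim_k \log\nu([\mathbf{i}_k])/\log\alpha_d(\mathbf{i}_k) < 1$. Part (3) of Theorem \ref{affine} (which rests on Theorem \ref{fourier2} with $m=1$) then gives $\fd \mu \leq 2H(\mu)/\chi_d(\mu)$.

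The main obstacle is the ergodic-theoretic input rather than the geometry. One has to check that Shannon--McMillan--Breiman applies as stated, which needs finite entropy; this holds because the alphabet $\mathcal{I}$ is finite. One also has to check that the singular-value growth rates of the random products $A_{\mathbf{i}_k}$ converge almost surely to the Lyapunov exponents. This is Kingman applied to $\log\|\wedge^j A_{\mathbf{i}_k}\|$: the quantity $\sum_{l\le j}\log\alpha_l$ is subadditive, and differencing in $j$ recovers the individual exponents. The sign and ordering conventions for $\chi_j$ (the statement normalises so that $\chi_j\ge 1$) need to be matched to $\alpha_d$ and to $\det$ carefully. Once these limits are in hand, the rest is bookkeeping.
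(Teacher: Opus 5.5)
Your proposal is correct and is essentially the paper's proof. You fix one $\nu$-typical word at which the Shannon--McMillan--Breiman limit and the Oseledets limits for every $\alpha_l(\mathbf{i}_k)$ hold simultaneously, and you feed these limits into Theorem \ref{affine}; the paper just says ``the results follow'', while you also show why the separate conditions on $\gamma$ and $\upsilon$ hold along a common tail of $k$. Your reading of $\chi_1(\mu)\cdots\chi_d(\mu)$ as the exponent of the determinant, i.e.\ $\chi_1(\mu)+\cdots+\chi_d(\mu)$, is the one under which $\log\nu([\mathbf{i}_k])/\log\det(A_{\mathbf{i}_k})$ converges to the stated quantity, so that interpretation is the right one.
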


\begin{proof}
Since  $\nu$  is ergodic, by the Shannon--McMillan--Breiman theorem we know that for $\nu$ almost all $\mathbf{i} \in \mathcal{I}^\mathbb{N}$ the following limits exist
\[
-\lim_{k \to \infty} \frac{1}{k} \log \nu([\mathbf{i}_k]) = H(\nu)=: H(\mu)
\]
where $H(\nu)$ is the Shannon entropy of $\nu$ and $H(\mu)$ is the Shannon entropy of $\mu$. Moreover, by Oseledet's multiplicative ergodic theorem, for each $l = 1, \dots, d$,
\[
-\lim_{k \to \infty} \frac{1}{k} \log \alpha_l( \mathbf{i}_k) =  \chi_l(\mu).
\]
In particular, we may choose $\mathbf{i} \in \mathcal{I}^\mathbb{N}$  such that all of these conclusions hold simultaneously and the results follow. 
\end{proof}

The approach to bound the Fourier dimension of an ergodic measure on a self-affine set based on Theorem \ref{fourier3} works best in the plane ($d=2$) because in general all of the singular values or Lyapunov exponents of $\mu$ are different (that is, we cannot expect to do better than $m=1$).  For this reason we now specialise to the planar case and we consider the situation where we know the Hausdorff dimension of the measure. Higher dimensional analogues are also possible but we leave these to the reader.    Jordan--Pollicott--Simon \cite{jps} introduced the \emph{Lyapunov dimension} of $\mu$ in $\mathbb{R}^2$ as
\[
\frac{H(\mu)}{\chi_1(\mu)} \wedge \frac{H(\mu)+\chi_2(\mu)-\chi_1(\mu)}{\chi_2(\mu)}
\]
and proved that this is always an upper bound for the Hausdorff dimension of $\mu$.  However, in many cases the Hausdorff and Lyapunov dimensions coincide, see \cite[Proposition 7.2]{feng}.  

The following very special case of Theorem \ref{affine} and Corollary \ref{ergodic}  gives simple conditions in terms of familiar quantities which ensure that a self-affine measure is not Salem.

\begin{cor}
Let $\mu$ be an ergodic measure on a self-affine set and suppose $\hd \mu$ is given by the Lyapunov dimension.  
\begin{enumerate}
\item If $0<H(\mu) \leq \chi_1(\mu)$ and $2\chi_1(\mu) <\chi_2(\mu)$, then $\mu$ is not Salem.
\item If $ \chi_1(\mu)<H(\mu) < \chi_2(\mu)-\chi_1(\mu)$, then $\mu$ is not Salem.
\end{enumerate}
\end{cor}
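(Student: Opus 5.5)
The plan is to compare the upper bound for $\fd \mu$ from Corollary \ref{ergodic}(3) with the lower bound $\fd \mu \geq \hd \mu$ that a Salem measure must satisfy. Here the Fourier dimension of $\mu$ is at most $2$, since $\mu$ lives in the plane. If $\mu$ were Salem we would need $\fd \mu = \hd \mu$, because $\hd\mu \leq 2$ and $\min\{\fd\mu,2\}=\fd\mu$ once $\fd\mu\le 2$. So it suffices to show, in each case, that $H(\mu)<\chi_2(\mu)$ and
\[
2\frac{H(\mu)}{\chi_2(\mu)} < \hd \mu ,
\]
where $\hd\mu$ equals the Lyapunov dimension
\[
\frac{H(\mu)}{\chi_1(\mu)} \wedge \frac{H(\mu)+\chi_2(\mu)-\chi_1(\mu)}{\chi_2(\mu)}.
\]

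For case (1), assume $0<H\le\chi_1$. Then $H-\chi_1\le 0$, so $H+\chi_2-\chi_1\le \chi_2$ and hence
\[
\frac{H}{\chi_1}\le \frac{H+\chi_2-\chi_1}{\chi_2}.
\]
To verify this, cross-multiply: it is equivalent to $H\chi_2\le \chi_1 H+\chi_1\chi_2-\chi_1^2$, that is, $H(\chi_2-\chi_1)\le \chi_1(\chi_2-\chi_1)$. This holds because $H\le\chi_1$. So the minimum is $H/\chi_1$, and $\hd\mu=H/\chi_1$. We also have $H\le \chi_1<\chi_2$, so Corollary \ref{ergodic}(3) applies and gives $\fd\mu\le 2H/\chi_2$. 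Since $H>0$, the inequality $2H/\chi_2<H/\chi_1$ is equivalent to $2\chi_1<\chi_2$, which is assumed. Hence $\fd\mu<\hd\mu$, and $\mu$ is not Salem.

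For case (2), assume $\chi_1<H<\chi_2-\chi_1$. By the reverse of the computation above, $H\ge\chi_1$ means the minimum is the second term, so
\[
\hd\mu=\frac{H+\chi_2-\chi_1}{\chi_2}.
\]
Moreover $H<\chi_2-\chi_1<\chi_2$, so Corollary \ref{ergodic}(3) again gives $\fd\mu\le 2H/\chi_2$. The needed inequality $2H<H+\chi_2-\chi_1$ is exactly $H<\chi_2-\chi_1$, which is assumed. Hence $\fd\mu<\hd\mu$, and $\mu$ is not Salem.

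The only delicate point is the Salem convention. For a measure, Salem means $\hd\mu=\min\{\fd\mu,d\}$. Because $\fd\mu\le 2H/\chi_2<\hd\mu\le 2$, this equality fails, so there is no issue even if the support is not Lebesgue null. Everything else is direct arithmetic once Corollary \ref{ergodic} is in hand. I expect no real obstacle beyond identifying which term of the Lyapunov dimension is the minimum in each regime.
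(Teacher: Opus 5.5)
Your proposal is correct and follows the same route as the paper: in each case you combine the bound $\fd\mu \leq 2H(\mu)/\chi_2(\mu)$ from Corollary~\ref{ergodic}(3) with the Lyapunov dimension formula for $\hd\mu$. You also make explicit two points the paper leaves implicit, namely that $H(\mu)<\chi_2(\mu)$ in both cases (so Corollary~\ref{ergodic}(3) applies) and which term of the minimum is active. One small slip: your opening claim that $\fd\mu\leq 2$ simply because $\mu$ lives in the plane is false for general measures, but your final paragraph already removes the need for it.
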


\begin{proof}
By Corollary \ref{ergodic} and the fact that the $\hd \mu$ is given by the Lyapunov dimension, in case (1)
\[
\fd \mu \leq 2 \frac{H(\mu)}{\chi_2(\mu)} < \frac{H(\mu)}{\chi_1(\mu)} = \hd \mu
\]
and so $\mu$ is not Salem.  Further, in case (2), 
\[
\fd \mu \leq 2 \frac{H(\mu)}{\chi_2(\mu)} <\frac{H(\mu)+\chi_2(\mu)-\chi_1(\mu)}{\chi_2(\mu)} = \hd \mu
\]
and so $\mu$ is not Salem.
\end{proof}

\subsection{Parabolic Julia sets}

Let $T$ be a rational map of degree at least 2 acting on the Riemann sphere $\mathbb{C} \cup \{\infty\}$ and let $J_T$ be the closure of the repelling periodic points of $T$.  Provided $J_T$ is not the whole sphere, we may assume it is a compact subset of $\mathbb{C}$ (via conjugation by an appropriate M\"obius transformation which sends a point not in $J_T$ to the point at infinity) and so we may identify $J_T$ with a compact set in $\mathbb{R}^2$.  The set $J_T$ is called the \emph{Julia set} of $T$ and is often a beautiful fractal set.  Julia sets are one of the most well-studied and important classes of dynamically defined fractal. We refer the reader to \cite{beardonjulia,stratmannurbanski} for more background on the dynamics of rational maps and Julia sets.

 We specialise further to the case of \emph{parabolic Julia sets} which are those containing no critical points of $T$ but at least one parabolic point, where a \emph{parabolic point} is a fixed point $\omega \in \mathbb{C}$ for $T$ such that $|T'(\omega)|=1$.  Then $T$ admits an expansion 
\[
T(z) = z+a(z-\omega)^{p(\omega)+1}+ \dots
\]
in a neighbourhood of $\omega$ where $p(\omega) \in \mathbb{N}$ is the \emph{petal number} of $\omega$.  The petal numbers play an important role in the dynamics of $T$, especially near the associated  parabolic points $\omega$. The Hausdorff and box dimensions of a parabolic Julia set are given by an explicit pressure formula and we denote this value by $h$.  Note that
\[
\frac{p(\omega)}{1+p(\omega)}<h<2
\]  
for all parabolic points $\omega$, see \cite{adu}.  For a parabolic Julia set, there exist  natural $h$-conformal measures $\mu$ on $J_T$  and these are  our measure of interest, see \cite{du,stratmannurbanski}. Such a $\mu$ is a measure of maximal dimension, that is, $\hd \mu = h = \hd J_T$.

Leclerc \cite{leclerc} recently proved that hyperbolic Julia sets have positive Fourier dimension, but the precise value is non-quantitative and hard to pin down. 
We are able to use the dynamics  near parabolic points to force mass into long thin rectangles, thereby obtaining  quantitative flatness estimates.

\begin{thm} \label{julia}
Let $\mu$ be an  $h$-conformal  measure supported on a parabolic Julia set in $\mathbb{C}$ and let $\omega$ be a parabolic point.   Then $\mu$ is $(\gamma, \upsilon)$-flat for all
\[
\gamma > \frac{h+(h-1)p(\omega)}{2+p(\omega)}
\]
and
\[
\upsilon = \frac{2+p(\omega)}{1+p(\omega)}.
\]
Therefore:
\begin{enumerate}
\item If  the $E(p\to q)$ extension estimate 
\eqref{eq:extension} holds for $\mu$, then
\[
\frac{p}{q(p-1)} \leq \frac{h+(h-1)p(\omega)}{2+p(\omega)}
\]
and, for $p=2$, 
\[
q \geq \frac{4+2p(\omega)}{h+(h-1)p(\omega)} .
\]
\item If $\mu$  satisfies the $I(p \to q)$ improving estimate \eqref{improvingest}, then
\[
\frac{1}{p}-\frac{1}{q} \leq  \frac{h+(h-1)p(\omega)}{2+p(\omega)}. 
\] 
\item The bound
 \[
  \fd \mu \leq \frac{2h+2(h-1)p(\omega)}{1+p(\omega)} = 2h - \frac{2p(\omega)}{1+p(\omega)}
  \]
  holds. In particular, if $h<\frac{2p(\omega)}{1+p(\omega)}$, then $\mu$ is not Salem. 
\end{enumerate}
The bounds above are optimised by taking $\omega$ with maximal $p(\omega)$.
\end{thm}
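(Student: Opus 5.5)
The plan is to exhibit, for each small $r>0$, a rectangle $Q_r$ near the parabolic point $\omega$ with one side of length $\approx r$ and one of length $\approx r^{1+p}$, where $p=p(\omega)$, such that $\mu(Q_r)\gtrsim r^{h+(h-1)p}$. Then
\[
\mathcal{L}^2(Q_r)\approx r^{2+p},\qquad \mu(Q_r)\gtrsim \mathcal{L}^2(Q_r)^{\frac{h+(h-1)p}{2+p}}.
\]
The shortest side is $\delta=r^{1+p}$, so $\mathcal{L}^2(Q_r)=\delta^{(2+p)/(1+p)}$. This gives the claimed $(\gamma,\upsilon)$-flatness, with the strict inequality in $\gamma$ absorbing implicit constants. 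Parts (1) and (2) then follow immediately from Theorems \ref{knapp} and \ref{thm:improving}.

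\emph{Geometry near $\omega$.} I will use the standard Leau--Fatou description. Conjugate so that $\omega=0$ and pass to the coordinate $w=z^{-p}$. In this coordinate $T$ is approximately the translation $w\mapsto w-pa$ near infinity, and the attracting petals correspond to half-planes. Hence, near $\omega$, $J_T$ is contained in $p$ cusp regions, one around each repelling direction. In the $w$-coordinate each cusp is a half-strip $\{|\mathrm{Im}\, w|\lesssim 1,\ \mathrm{Re}\, w \gtrsim 1\}$, after rotating so the repelling direction is real. Pulling back by $z=w^{-1/p}$, a point with $|z|=t$ has $|w|=t^{-p}$. The angular deviation from the repelling direction is therefore $\lesssim |w|^{-1}=t^{p}$, so the transverse deviation is $\lesssim t^{1+p}$. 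Consequently each cusp intersected with $B(\omega,r)$ lies in a rectangle of length $\approx r$ along the repelling direction and width $\approx r^{1+p}$, since the width bound $t^{1+p}$ is increasing in $t\le r$.

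\emph{Mass estimate.} I will invoke the global measure formula for $h$-conformal measures on parabolic Julia sets due to Denker--Urbański and Stratmann--Urbański \cite{du,stratmannurbanski}. It gives
\[
\mu(B(\omega,r))\approx r^{h+(h-1)p(\omega)}.
\]
This can also be derived by summing conformal images of a fundamental annulus under the inverse branch near $\omega$, whose derivative at step $n$ is $\approx n^{-(1+p)/p}$ at distance $\approx n^{-1/p}$. The $p$ cusps cover $J_T\cap B(\omega,r)$, so by pigeonholing one rectangle $Q_r$ satisfies $\mu(Q_r)\gtrsim r^{h+(h-1)p}$. I expect this step to be the main obstacle: it requires importing the measure formula precisely and checking that the cusp containment holds uniformly down to $\omega$, including the shape of the repelling petals.

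\emph{Fourier bound (3).} The product is
\[
\upsilon\gamma=\frac{h+(h-1)p}{1+p}.
\]
If $\upsilon\gamma<1$, Theorem \ref{fourier2} with $m=1$ applies, and letting $\gamma$ decrease to its threshold gives $\fd\mu\le 2\upsilon\gamma$, which equals the stated bound. If $\upsilon\gamma\ge 1$, the bound is at least $2$, while $\fd\mu\le \hd\mu= h<2$, so it holds trivially. The non-Salem claim follows by comparing $2h-\frac{2p}{1+p}$ with $h=\hd\mu$. The final remark holds because each of these bounds is monotone in $p(\omega)$.
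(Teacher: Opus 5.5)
The step that fails is the cusp containment; everything else in your outline matches the paper's proof. The paper also uses the Stratmann--Urba\'nski measure formula, $p(\omega)$ thin cusps, pigeonholing, then Theorems \ref{knapp}, \ref{thm:improving} and \ref{fourier2} with $m=1$. Your handling of $\upsilon\gamma\geq 1$ via $\fd\mu\leq\hd\mu=h<2$ is fine; the paper uses porosity instead. The paper does not derive the containment: it quotes it from \cite[Lemma 5.5]{liam}.

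Your sketch of the containment does not work. Write $F(w)=w-pa+\rho(w)$ with $\rho(w)\to 0$. Knowing that an attracting petal contains a half-plane only confines $J_T$ to sectors about the repelling rays. To get a strip you must show that points with large $|\mathrm{Im}\,w|$ eventually enter an attracting petal. That needs the repelling Fatou coordinate $\Phi$, an exact conjugacy to translation, which gives $J_T\subseteq\{|\mathrm{Im}\,\Phi|\leq K\}$ in each repelling petal. But $\Phi(w)-w$ is essentially the accumulated error $\sum_n\rho(w_n)$ along the orbit, and it is unbounded. Even when $\rho(w)=O(|w|^{-1})$ (as for $p=1$), the $w^{-1}$ term, governed by the iterative residue, gives $\Phi(w)=w+\beta'\log w+O(1)$. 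So in general you only get $|\mathrm{Im}\,w|\lesssim\log|w|$, i.e.\ cusps of width $r^{1+p}|\log r|$, which is the form of the quoted lemma. That loss is harmless: the strict inequality on $\gamma$ absorbs it. Lengthening the long side to $r|\log r|^{1/(1+p)}$ restores $\mathcal{L}^2(Q_r)=\delta^{(2+p)/(1+p)}$ for the true shortest side $\delta=r^{1+p}|\log r|$.

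For $p=p(\omega)\geq 2$ the raw coordinate $w=z^{-p}$ you use (with $\omega=0$) fails more seriously. If $a_2$ is the coefficient of $z^{p+2}$, then $\rho(w)=-pa_2z+O(|z|^2)$. Summed along an orbit reaching $|w|\approx t^{-p}$, this has size $\approx t^{1-p}$ with a generically non-zero transverse component. Pulled back, that is a deviation $\approx t^2\gg t^{1+p}$ from the repelling ray at distance $t$ from $\omega$.

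This is a real effect. Take $T(z)=z+z^3+i\eta z^4$ and conjugate by $\psi(z)=z+i\eta z^2$; this removes the quartic term. In the coordinate $u=\psi(z)$ the Julia set near $0$ then lies within $O(|u|^3\log(1/|u|))$ of the real axis. In the original coordinate it therefore follows the parabola $y=-\eta x^2$. Likewise, straight-line confinement at scale $t^{1+p}$ is not preserved by the M\"obius conjugation used to make $J_T$ compact.

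So for $p\geq2$ the cusps are centred on curved arcs tangent to the rays. Consequently $J_T\cap B(\omega,r)$ lies in no straight rectangle of width $o(r^2)$, and $\mu(Q_r)\gtrsim\mu(B(\omega,r))$ for a straight $r\times r^{1+p}$ rectangle does not follow. Normalising first does not help, because $\psi$ is not affine and the flatness framework uses Euclidean cuboids.

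The step you flagged as the main obstacle therefore genuinely fails for $p\geq 2$ unless this quadratic bending vanishes. It is invisible for $p=1$, where $t^2=t^{1+p}$, and absent for symmetric maps such as $z\mapsto z+z^{p+1}$. The straight-line containment as quoted in the paper is open to the same objection. For $p\geq2$ it must be checked in the original coordinates before it is fed into Theorem \ref{fourier2}.
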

\begin{proof}
Consider the Julia set in a neighbourhood of $\omega$.  The Fatou flower theorem tells us that  $J_T$ is squeezed between $p(\omega)$ many open `petals' tangent to each other along $p(\omega)$ many  rays emanating from  $\omega$.  This suggests that mass is concentrated near these tangent rays but we need two quantitative estimates to make this precise.  First, the global measure formula which describes the $\mu$ mass of small balls up to constants gives that
\[
 \mu(B(\omega,r)) \approx r^{h+(h-1)p(\omega)}.
\]
The global measure formula was proved by Stratmann and Urba\'nski \cite{stratmannurbanski}. Next, we need the quantitative Fatou flower theorem, proved in \cite[Lemma 5.5]{liam}, which says that 
\[
B(\omega,r) \cap J_T
\]
is contained in the $\approx r^{1+p(\omega)} |\log r |$ neighbourhood of the lines emanating from $\omega$.  This means that for all small $r$, we can find a rectangle $Q$ with sidelengths $\approx r$ and $\approx r^{1+p(\omega)} |\log r |$ oriented along one of the rays which carries mass
\[
\mu(Q) \approx  \mu(B(\omega,r)) \approx r^{h+(h-1)p(\omega)}.
\]
In particular, 
\[
\mathcal{L}^2(Q) \approx r^{2+p(\omega)}  |\log r | \geq \left(r^{1+p(\omega)}\right)^{\frac{2+p(\omega)}{1+p(\omega)}}
\]
and so
\[
\mu(Q) \gtrsim \mathcal{L}^2(Q)^{\gamma}
\]
for any fixed 
\[
\gamma > \frac{h+(h-1)p(\omega)}{2+p(\omega)}.
\]
The desired $(\gamma, \upsilon)$-flatness follows.  The estimates in (1) and (2) then follow from Theorems \ref{knapp} and \ref{thm:improving}, respectively. Turning our attention to Fourier dimension and the estimates in (3), we seek to apply Theorem \ref{fourier2}.  Then, since we can choose $\upsilon \gamma$  as close as we like to 
  \[
  \frac{h+(h-1)p(\omega)}{2+p(\omega)} \cdot   \frac{2+p(\omega)}{1+p(\omega)} = \frac{h+(h-1)p(\omega)}{1+p(\omega)} ,
  \]
  as long as this is strictly less than 1, that is, $h<\frac{1+2p(\omega)}{1+p(\omega)}$,    we can apply Theorem \ref{fourier2} to get
  \[
  \fd \mu \leq \frac{2h+2(h-1)p(\omega)}{1+p(\omega)}.
  \]
  However, the bound also holds in the case $h\geq \frac{1+2p(\omega)}{1+p(\omega)}$ but is not useful because it exceeds 2.  Indeed, since $J_T$ is porous, $\mu$ is not absolutely continuous and so we certainly have the bound $\fd \mu \leq 2$.
  
  Finally, we know $\hd \mu = h$ and so, comparing this with the upper bound for the Fourier dimension, we get that if $h<\frac{2p(\omega)}{1+p(\omega)}$, then $\fd \mu < \hd \mu$ and  $\mu$ is not Salem. 
  \end{proof}
  
   The upper bound for the Fourier dimension in Theorem \ref{julia} (3) approaches zero as $h$ approaches
  \[
  \max_\omega \frac{p(\omega}{1+p(\omega)}.
  \]
  This is a (strict) general lower bound for $h$ but it is sharp and so by taking $h$ arbitrarily close to this limit we can exhibit examples where the Fourier dimension is arbitrarily small while the Hausdorff dimension remains bounded away from zero. 
  
 The estimate in the case $p=2$ in Theorem \ref{julia} (1)  is always a non-trivial estimate since $h<2$ and so the right hand side is $>2$ but, moreover, it is always better than the bound 
\[
q \geq \frac{4}{\frd \mu}
\]
coming from Theorem \ref{basic}. This can be checked by using the additional information that 
\begin{equation} \label{frostjulia}
  \frd \mu = \min\left\{h,h+(h-1)\max_\omega p(\omega)\right\}
  \end{equation}
  which follows from the global measure formula.  
  
      We obtained the upper bound for the Fourier dimension in Theorem \ref{julia}  from Theorem \ref{fourier2} but we can also obtain an estimate from Corollary \ref{fourier} using the Frostman dimension formula \eqref{frostjulia} above. Interestingly, the alternative bound coming from Corollary \ref{fourier} is the same as the one we obtained in Theorem \ref{julia} in the case $h\geq1$ but for $h<1$ it is strictly worse.  

\subsection{Kleinian limit sets}

Let $\Gamma$ be a non-elementary geometrically finite Kleinian group acting on $(d+1)$-dimensional hyperbolic space with $d \geq 1$ an integer. Assume $\Gamma$ contains at least one parabolic element.  Let $L(\Gamma)$ denote the \emph{limit set} which is the closure of the hyperbolic fixed points of  $\Gamma$.  We assume that $L(\Gamma)$ is not the whole boundary and therefore we may assume that $L(\Gamma)$ is a compact subset of  $\rd$, that is, we use the upper half space model of hyperbolic space. The limit set is often a beautiful fractal set which encodes many dynamical features of the action of $\Gamma$.  We refer the reader to \cite{beardonkleinian,bowditch, stratmannvelani} for more background on Kleinian group actions and limit sets.

An element of $\Gamma$ is called \emph{parabolic} if it fixes precisely one point on the boundary of hyperbolic space and we call this fixed point a \emph{parabolic point}. The parabolic points are necessarily in $L(\Gamma)$.   For a parabolic point $z \in L(\Gamma)$, let $k(z) \in \{1, \dots, d\}$ be the \emph{parabolic rank} of $z$ which is the rank of the largest free Abelian subgroup of the stabiliser of $z$.    The Hausdorff and box dimensions of $L(\Gamma)$ are given by the \emph{Poincar\'e exponent} $\delta$ which is the critical exponent of the Poincar\'e series.  Note that
\[
\frac{k(z)}{2} < \delta < d
\]
for all parabolic points $z$, see \cite{tukia}. The second inequality holds because we are in the geometrically finite case and we assumed the limit set was not the whole boundary. The natural measure supported on the limit set is the \emph{Patterson--Sullivan measure} $\mu$ and this is our measure of interest. This is a $\delta$-conformal measure of maximal dimension $\hd \mu = \delta = \hd L(\Gamma)$ and is intimately connected to the action of $\Gamma$ and the geometry of $L(\Gamma)$.

Bourgain and Dyatlov \cite{bourgaindyatlov} and later Li, Naud and Pan \cite{naud} proved that the Patterson--Sullivan measure enjoys polynomial Fourier decay in certain situations.  In particular, it has positive Fourier dimension, but the precise value is non-quantitative and hard to pin down.  As such, it is useful to provide concrete upper bounds and we do this here. We are able to use the local structure of the limit set near parabolic points to force mass into flat slabs, and use this to  obtain  quantitative flatness estimates.

\begin{thm} \label{kleinian}
Let $\mu$ be the \emph{Patterson--Sullivan} measure supported on a non-elementary geometrically finite Kleinian group acting on $(d+1)$-dimensional hyperbolic space and let $z \in L(\Gamma)$ be a parabolic point.  Then $\mu$ is $(\gamma, \upsilon)$-flat for
\[
\gamma = \frac{2\delta-k(z)}{2d-k(z)}
\]
and
\[
\upsilon = d-k(z)/2.
\]
Therefore:
\begin{enumerate}
\item If  the $E(p\to q)$ extension estimate 
\eqref{eq:extension} holds for $\mu$, then
\[
\frac{p}{q(p-1)} \leq \frac{2\delta-k(z)}{2d-k(z)},
\]
and, for $p=2$, 
\[
q \geq \frac{4d-2k(z)}{2\delta-k(z)} .
\]
\item If $\mu$  satisfies the $I(p \to q)$ improving estimate \eqref{improvingest}, then
\[
\frac{1}{p}-\frac{1}{q} \leq  \frac{2\delta-k(z)}{2d-k(z)}. 
\] 
\item The bound
  \[
   \fd \mu \leq \max\left\{2\delta - k(z) , \frac{(2\delta - k(z))(d-2\delta+k(z))}{d-\delta}\right\}
  \]
  holds. In particular, if $\delta<k(z)\wedge\frac{d+k(z)}{3}$, then $\mu$ is not Salem. Further, provided $\delta<d-k(z)/2$, the bound
 \[
  \fd \mu \leq 2\delta - k(z)
  \]
  also holds.  In particular, if $\delta<k(z)\wedge(d-k(z)/2)$, then $\mu$ is not Salem. 
\end{enumerate}
The bounds  above are optimised by taking $z$ with maximal $k(z)$.
\end{thm}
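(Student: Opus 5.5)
Mirroring the proof of Theorem \ref{julia}, I will build, at every small scale $r$, a slab around the parabolic point $z$ that holds a large share of the mass. I will use two geometric inputs about geometrically finite groups. The first is the global measure formula of Stratmann--Velani, which gives
\[
\mu(B(z,r)) \approx r^{2\delta - k(z)}
\]
for the parabolic point $z$. The second is the local structure of $L(\Gamma)$ near $z$. After conjugating so that $z$ is sent to $\infty$, the stabiliser of $z$ acts cocompactly on a $k(z)$-dimensional affine subspace $W$ of $\rd$, and $L(\Gamma)\setminus\{\infty\}$ lies in a bounded neighbourhood of $W$. Applying inversion to return $z$ to a finite point, $L(\Gamma)\cap B(z,r)$ then lies within distance $\lesssim r^2$ of a $k(z)$-dimensional sphere through $z$. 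That sphere is tangent at $z$ to a $k(z)$-plane $V$ and deviates from $V$ by $\lesssim r^2$ inside $B(z,r)$. I expect this "horoball/cusp neighbourhood" statement to be standard, and would cite it in the form used by Fraser and Stuart for Assouad-type dimensions of Kleinian limit sets.

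Next I take $Q$ to be the cuboid centred at $z$ with $k(z)$ sides of length $\approx r$ along $V$ and $d-k(z)$ sides of length $\approx r^2$ normal to $V$. By the above, $Q$ contains $L(\Gamma)\cap B(z,r)$. This gives
\[
\mathcal{L}^d(Q)\approx r^{k(z)+2(d-k(z))}=r^{2d-k(z)}
\]
together with $\mu(Q)\gtrsim r^{2\delta-k(z)}$. Hence $\mu(Q)\gtrsim \mathcal{L}^d(Q)^{\gamma}$ with $\gamma=(2\delta-k(z))/(2d-k(z))$. The shortest side is $\delta_r=r^2$, so $\mathcal{L}^d(Q)\approx \delta_r^{d-k(z)/2}$ and $\upsilon=d-k(z)/2$. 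If $k(z)=d$ there is no short side; I would treat this degenerate case separately, but it cannot occur because $\delta<d$ forces the limit set to be a proper subset. Parts (1) and (2) then follow immediately from Theorems \ref{knapp} and \ref{thm:improving}.

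For (3), I first apply Corollary \ref{fourier} with the Frostman dimension $\alpha=\min\{\delta,2\delta-k(z)\}$, which the global measure formula gives. This yields
\[
\frac{2\gamma(d-\alpha)}{1-\gamma}=\frac{(2\delta-k(z))(d-\alpha)}{d-\delta}.
\]
When $\alpha=\delta$ this equals $2\delta-k(z)$. When $\alpha=2\delta-k(z)$ it equals the second expression in the maximum. Taking the maximum covers both cases.

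Second, the $d-k(z)$ shortest sides of $Q$ are comparable, all of size $r^2$, and $\upsilon\gamma=\delta-k(z)/2$. So Theorem \ref{fourier2} with $m=d-k(z)$ applies exactly when $\delta-k(z)/2<d-k(z)$, that is $\delta<d-k(z)/2$, and gives $\fd\mu\le 2\delta-k(z)$.

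The non-Salem claims come from comparing these bounds with $\hd\mu=\delta$. Since $2\delta-k(z)<\delta$ iff $\delta<k(z)$, the second bound gives the second non-Salem claim. For the first claim, the second term of the maximum is $<\delta$ provided $\delta<(d+k(z))/3$, and I will check this algebraically. Finally, larger $k(z)$ decreases $\gamma$ and the bounds, so choosing $z$ with maximal $k(z)$ optimises them.

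The main obstacle is the containment step. I need the quantitative statement that $L(\Gamma)\cap B(z,r)$ lies within $O(r^2)$ of a $k(z)$-sphere through $z$, with constants uniform in $r$. The obstruction is that the bounded neighbourhood of $W$ at infinity must be shown to invert to an $O(r^2)$ neighbourhood at scale $r$. I would prove it directly by inverting a cylinder $\{x:\operatorname{dist}(x,W)\le C\}$ about $z$ and estimating its image in $B(z,r)$.
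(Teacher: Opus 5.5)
Your route is the paper's own proof. You use the Stratmann--Velani global measure formula at $z$ and the bounded-neighbourhood characterisation of geometric finiteness after sending $z$ to $\infty$ (the paper cites Bowditch). You invert to an $O(r^2)$ slab, then apply Theorems \ref{knapp} and \ref{thm:improving}, Corollary \ref{fourier}, and Theorem \ref{fourier2} with $m=d-k(z)$. The containment step you flag is immediate. With $z=0$ and $y=x/|x|^2$, the component normal to the $k(z)$-plane satisfies $|x_\perp|=|y_\perp|/|y|^2\lesssim |x|^2\le r^2$, so you get the $O(r^2)$ neighbourhood of a $k(z)$-plane through $z$ directly, with no sphere needed. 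Two points need correcting, however.

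\textbf{The Frostman dimension.} The global measure formula does not give $\frd\mu=\min\{\delta,2\delta-k(z)\}$. Balls centred at \emph{every} parabolic point $p$ carry mass $\approx r^{2\delta-k(p)}$, so $\frd\mu=\min\{\delta,2\delta-k_{\max}\}$, where $k_{\max}$ is the largest parabolic rank; this is what the paper uses. The bound in Corollary \ref{fourier} decreases in $\alpha$, so plugging in your larger $\alpha$ is illegitimate when $\delta<k(z)<k_{\max}$. With the correct $\alpha$, your computation gives the stated maximum whenever $\delta\ge k_{\max}$ or $k(z)=k_{\max}$. For the remaining $z$ you need a comparison step:
\begin{enumerate}
\item If the stated expression is at least $\delta$, there is nothing to prove, since $\fd\mu\le\sd\mu\le\hd\mu=\delta$.
\item Otherwise $\delta<k(z)$, and $u:=2\delta-k(z)<\min\{\delta,d-\delta\}\le d/2$.
\item Since $t\mapsto t(d-t)$ is increasing on $[0,d/2]$, the correctly justified bound at a point of maximal rank, $(2\delta-k_{\max})(d-2\delta+k_{\max})/(d-\delta)$, is at most $u(d-u)/(d-\delta)$, which is the stated expression.
\end{enumerate}

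\textbf{The case $k(z)=d$.} Your claim that $k(z)=d$ cannot occur is false. Every cusp of a geometrically finite Fuchsian group ($d=1$) has rank $1=d$, yet the limit set can be a Cantor set. The corollary following the theorem in the paper is exactly the case $k(z)=d$ with $\delta\in(d/2,d)$.
\begin{itemize}
\item In this case $Q$ is a cube of side $r$, so $\gamma$-flatness with $\gamma=(2\delta-d)/d$ still holds.
\item Hence (1), (2) and the first bound in (3), which becomes $4\delta-2d$, go through unchanged.
\item What fails is the $\upsilon$ claim: the shortest side is $r$, not $r^2$, so you only witness $\upsilon=d$. For $d=1$, the value $d-k(z)/2=1/2$ is not even admissible.
\item This is harmless, because the Theorem \ref{fourier2} clause needs $\delta<d/2$, which is incompatible with $\delta>k(z)/2$.
\end{itemize}
So handle this case with cubes rather than excluding it.
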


\begin{proof}
Consider the limit set in a neighbourhood of $z$.  The global measure formula which describes the $\mu$ mass of small balls up to constants tells us that
\[
  \mu(B(z,r)) \approx r^{2\delta-k(z)}.
\]
The global measure formula was  proved by Stratmann and Velani \cite{stratmannvelani}. Now, we seek to squeeze this mass into a relatively flat part of the space.  Briefly conjugate $z$ to $\infty$ via a M\"obius transformation.  Then, since $\Gamma$ is geometrically finite, one of the many characterisations of geometric finiteness (see \cite{bowditch})  ensures that  the limit set is contained in a $\lesssim 1$ neighbourhood of a $k(z)$-dimensional linear subspace $V$  and conjugating back we get  that 
\[
L(\Gamma) \cap B(z,r)
\]
is contained in the $\lesssim r^2$ neighbourhood of a $k(z)$-dimensional affine subspace $V'$ containing $z$.  Therefore, we may find a cuboid $Q$ oriented with $V$ with $(d-k(z))$ short sides of length $\approx r^2$ and $k(z)$ `long' sides of length $r$ such that
\[
\mu(Q) \approx  \mu(B(z,r)) \approx r^{2\delta-k(z)}.
\]
Moreover, 
\[
\mathcal{L}^d(Q) \approx r^{k(z)+2(d-k(z))} = r^{2d-k(z)} =\left( r^2\right)^{d-k(z)/2}
\]
and so
\[
\mu(Q) \approx \mathcal{L}^d(Q)^{\frac{2\delta-k(z)}{2d-k(z)}} 
\]
and $\mu$ is $(\gamma, \upsilon)$-flat with 
\[
\gamma = \frac{2\delta-k(z)}{2d-k(z)}
\]
and
\[
\upsilon = d-k(z)/2
\]
as required.  The estimates in (1) and (2) then follow from Theorems \ref{knapp} and \ref{thm:improving}, respectively. Turning our attention to Fourier dimension and (3),  we may apply Corollary \ref{fourier} to get
    \begin{equation} \label{kleinianfromfrost}
  \fd \mu \leq \frac{2\gamma(d-\alpha)}{1-\gamma}
  \end{equation}
  where
  \begin{equation} \label{frostkleinian}
  \alpha = \frd \mu =  \min\left\{\delta,2\delta-\max_pk(z)\right\}
  \end{equation}
  is the Frostman dimension of $\mu$. This formula for $\frd \mu$ follows from the global measure formula.  Simplifying, the right hand side of \eqref{kleinianfromfrost} gives the first estimate. The condition ensuring that $\mu$ is not Salem is found by comparing the upper bound for the Fourier dimension with $\hd \mu = \delta$. To get the final estimate we look  to apply  Theorem \ref{fourier2}.  Since the $(d-k(z))$ shortest sides of $Q$ are of length $r^2$,  provided
  \[
\upsilon \gamma = \delta-k(z)/2<d-k(z),
  \]
  that is, $\delta<d-k(z)/2$,   we can apply Theorem \ref{fourier2} to get
  \[
  \fd \mu \leq 2 \upsilon \gamma = 2\delta - k(z),
  \]
  as required. The condition ensuring that $\mu$ is not Salem is found by comparing the upper bound for the Fourier dimension with $\hd \mu = \delta$ and ensuring that $\delta$ is small enough that this bound applies. 
  \end{proof}
  
  One can see that the upper bound for the Fourier dimension in Theorem \ref{kleinian} (3) becomes very good if $\delta$ is close to
  \[
  \max_z k(z)/2.
  \]
  This is a (strict) general lower bound for $\delta$ but it is sharp and so by taking $\delta$ arbitrarily close to $\max_zk(z)/2>0$ we can exhibit examples where the Fourier dimension is arbitrarily small while the Hausdorff dimension remains bounded away from zero. Such examples are perhaps surprising given the work of Bourgain--Dyatlov \cite{bourgaindyatlov} and Li--Naud--Pan \cite{naud} which established that the Fourier dimension is strictly positive in rather general situations.
  
   The estimate in the case $p=2$ in Theorem \ref{kleinian} (1)  is always a non-trivial estimate whenever $\delta<d$ in which case the lower bound for $q$ is $>2$.  Moreover, it is always better than the bound 
\[
q \geq \frac{2d}{\frd \mu}
\]
coming from Theorem \ref{basic}. This can be checked by using the formula for the Frostman dimension given in \eqref{frostkleinian} above.

  Theorem \ref{kleinian} gives simple to check conditions which ensure the Patterson--Sullivan measure $\mu$ is not a Salem measure.  We gave two conditions and neither is generally better than the other.  As an example of where the first condition is better (and the second condition does not apply at all), we state a corollary in the case where there is a parabolic point of maximal rank.
  
\begin{cor}
Let $\mu$ be the \emph{Patterson--Sullivan} measure supported on a non-elementary geometrically finite Kleinian group acting on $(d+1)$-dimensional hyperbolic space and suppose there exists a parabolic point of maximal rank, that is, of rank $d$.  Then 
  \[
   \fd \mu \leq  4\delta - 2d 
  \]
and, if $\delta<2d/3$, then $\mu$ is not Salem. Note that in this case we always have $\delta>d/2$ but any value in the range $\delta \in (d/2,d)$ is possible.
\end{cor}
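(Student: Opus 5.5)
The plan is to specialise Theorem \ref{kleinian} (3) to a parabolic point $z$ with $k(z)=d$. Since $d$ is the largest possible parabolic rank, this choice also realises $\max_z k(z)$, which enters the Frostman dimension formula \eqref{frostkleinian}.

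First, the tail condition. The general bound $k(z)/2<\delta$ from \cite{tukia} gives $\delta>d/2$, and geometric finiteness together with $L(\Gamma)\neq$ the whole boundary gives $\delta<d$. So $\delta\in(d/2,d)$. In particular the second (conditional) estimate of Theorem \ref{kleinian} (3), which requires $\delta<d-k(z)/2=d/2$, never applies, and we must use the first estimate. This explains the remark preceding the corollary.

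Next, substitute $k(z)=d$ into the first estimate. The two terms in the maximum become $2\delta-d$ and
\[
\frac{(2\delta-d)(d-2\delta+d)}{d-\delta}=\frac{(2\delta-d)\cdot 2(d-\delta)}{d-\delta}=2(2\delta-d)=4\delta-2d.
\]
Since $2\delta-d>0$, the second term dominates, so $\fd\mu\leq 4\delta-2d$. As a consistency check, this is exactly Corollary \ref{fourier} with $\gamma=(2\delta-d)/d$ and $\alpha=\frd\mu=\min\{\delta,2\delta-d\}=2\delta-d$; the minimum equals $2\delta-d$ because $\delta<d$. Indeed
\[
\frac{2\gamma(d-\alpha)}{1-\gamma}=\frac{2\frac{2\delta-d}{d}\cdot 2(d-\delta)}{\frac{2(d-\delta)}{d}}=2(2\delta-d).
\]
For the Salem claim, recall $\hd\mu=\delta$. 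The bound $4\delta-2d<\delta$ is equivalent to $\delta<2d/3$, so in that case $\fd\mu<\hd\mu$ and $\mu$ is not Salem.

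The only step that is not pure bookkeeping is the closing remark that every $\delta\in(d/2,d)$ occurs. I would not prove this but cite the literature on geometrically finite groups with maximal rank cusps. For example, when $d=1$ one can take Fuchsian groups with a cusp, whose Poincar\'e exponents fill $(1/2,1)$; in higher dimensions one can use Schottky-type groups with a rank $d$ parabolic subgroup, deforming the hyperbolic generators so that $\delta$ varies continuously between its limiting values. I expect the main work here to lie in correctly referencing these constructions, not in anything internal to the paper.
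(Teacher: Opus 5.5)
Your proposal is correct and matches the paper's (implicit) argument. You specialise the first estimate of Theorem \ref{kleinian} (3) to $k(z)=d$, use $\delta\in(d/2,d)$ to rule out the second estimate and to get $\frd\mu=2\delta-d$, so the maximum collapses to $4\delta-2d$, and compare with $\hd\mu=\delta$. Like the paper, you leave the claim that every $\delta\in(d/2,d)$ occurs as a remark resting on the literature rather than something to prove.
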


\section*{Acknowledgements}

I am grateful to Ana de Orellana for many useful conversations about Fourier restriction and Knapp examples,  to Lijian Yang for helpful conversations about Fourier decay, and to Jeff Hicks for helpful conversations about differential geometry. I am also grateful to Amlan Banaji and Jonathan Hickman for helpful comments.

\end{document}